\documentclass[11pt]{article}

\usepackage{amsmath,amssymb,amsthm,mathtools}
\usepackage{enumitem}
\usepackage{hyperref}
\usepackage{color}
\usepackage{float}
\usepackage{graphicx}
\usepackage{fontawesome5}
\usepackage[normalem]{ulem}
\usepackage{orcidlink}

\usepackage{todonotes}

\def\cC{\mathcal C}

\def\cK{\mathcal K}

\newtheorem{theorem}{Theorem}[section]
\newtheorem{proposition}[theorem]{Proposition}
\newtheorem{corollary}[theorem]{Corollary}
\newtheorem{lemma}[theorem]{Lemma}
\theoremstyle{definition}
\newtheorem{definition}[theorem]{Definition}
\newtheorem{example}[theorem]{Example}
\theoremstyle{remark}
\newtheorem{remark}[theorem]{Remark}

\newcommand{\dom}{\operatorname{dom}}

\newcommand{\Lip}{\operatorname{Lip}}

\newcommand{\R}{\mathbb{R}}
\newcommand{\Rp}{\mathbb{R}_{+}}

\newcommand{\N}{\mathbb{N}}
\newcommand{\Q}{\mathbb{Q}}

\newcommand{\intt}{\operatorname{int}}
\newcommand{\bd}{\operatorname{bd}}
\newcommand{\co}{\operatorname{co}}
\newcommand{\clco}{\operatorname{cl\,co}}
\newcommand{\cone}{\operatorname{cone}}
\newcommand{\dist}{\operatorname{dist}}

\newcommand{\cl}{\operatorname{cl}}

\title{Separation of Regular Co-Radiant Sets via Inner Approximating Sequences, Radial Profile Functions and Applications to Optimization}

\author{
Fernando Garc\'ia-Casta\~no\thanks{\texttt{fernando.gc@ua.es} ORCID: 0000-0002-8352-8235}
\and
Miguel \'Angel Melguizo-Padial\thanks{\texttt{ma.mp@ua.es} ORCID:0000-0003-0303-791X}
\\[1ex]
\small Department of Mathematics, University of Alicante, \\ \small 03690 San Vicente del Raspeig,  Alicante, Spain
}

\date{}

\begin{document}

\maketitle


\begin{abstract}
We develop a new framework for the analysis of co-radiant sets based on a new notion of regularity. This framework extends the classical theory beyond co-radiant sets admitting a norm-base and provides a unified setting for separation and optimization. We introduce two new tools: inner approximating sequences and the radial profile function. The former provides a sequential approach to regularity and is the key ingredient in the proof of a general separation theorem for regular co-radiant sets, extending the existing separation theory. The latter is defined for arbitrary co-radiant sets and yields new characterizations of the existence of norm-bases, together with sequence-based characterizations of regularity under a solidness assumption on the associated cone. It also gives rise to a scalar function leading to scalarization results for approximate efficiency without additional structural assumptions on the underlying co-radiant set. Finally, we provide a variational interpretation of the separation residual by establishing radial-depth and metric error bounds. These yield sufficient conditions for approximate radial feasibility in equilibrium problems and variational inequalities, and an exact penalization principle on the zero level set of the residual.
\end{abstract}

\noindent\textbf{Keywords.}
Co-radiant sets; regular co-radiant sets; nonlinear separation; radial profile; radial depth; approximate efficiency; radial error bounds; merit functions; exact penalization; equilibrium problems; variational inequalities.

\medskip

\noindent\textbf{Mathematics Subject Classification.}
Primary 90C29; Secondary 90C33, 90C26, 90C48, 46N10, 49J53.

\section{Introduction}

Approximate solutions are unavoidable in optimization and equilibrium
theory. In vector optimization, the first systematic notion of approximate
efficiency is usually attributed to Kutateladze \cite{Kutateladze1979}.
Further refinements were introduced by Loridan \cite{Loridan1984},
V\'alyi \cite{Valyi1985}, White \cite{White1986}, N\'emeth
\cite{Nemeth1986}, Helbig \cite{Helbig1992}, and Tanaka
\cite{Tanaka1995}. These notions differ in the geometry of the admissible
perturbations and in their stability as the approximation parameter tends
to zero. A unifying approach was proposed by Guti\'errez, Jim\'enez and
Novo \cite{Gutierrez2006a} through co-radiant perturbation
sets: a co-radiant set determines which improvements are relevant at a
given precision level, and several classical notions of $\varepsilon$-efficiency
are recovered as particular cases. Sayadi-bander et al.\
\cite{SayadiBander2017} later introduced a separation and scalarization
framework for co-radiant sets in finite-dimensional spaces with compact
bases, inspired by nonlinear cone separation, and this approach has
motivated subsequent developments.

A nonlinear separation theorem for co-radiant sets in normed spaces was
obtained in \cite{GarciaCastanoMelguizo2025JGO}, using the cone
separation theorem of \cite{GarciaCastanoMelguizoParzanese2023}.
This yielded scalarization results for approximate and properly
approximate efficient points under weaker assumptions than those
previously available. A central hypothesis in
\cite{GarciaCastanoMelguizo2025JGO}, however, was that one of the
co-radiant sets admits a norm-base a natural assumption, but a
restrictive one, as the examples in Section~2 below illustrate.

In this paper we develop a new framework for the analysis of co-radiant
sets based on a new notion of regularity, which extends the
classical theory beyond co-radiant sets admitting a norm-base and
provides a unified setting for separation and optimization theory. A
regular co-radiant set need not have a global norm-base; instead, it
admits inner conical reductions whose spherical sections carry the
norm-base structure needed for separation. Regularity therefore
preserves, along an approximating family, the radial geometry required by
the separation argument, even when it fails globally. This relaxation is
motivated by \cite[Theorem~4.9]{GarciaCastanoMelguizo2025JGO}, where
separation is obtained from an increasing sequence of co-radiant subsets
admitting norm-bases; the present paper makes this idea precise through
the notion of an inner approximating sequence, developed
independently of any separation statement.

To replace the norm-base assumption by regularity, we introduce two new tools: inner approximating sequences
and the radial profile function.

The former provides a sequential approach to regularity. We first study
inner approximations of cones and prove stability properties for the
strict separation property for cones (SSP). While outer approximation schemes
based on sequences of cones containing a fixed cone are well
established in vector optimization, and have been used successfully in
the study of efficient solutions and stability (see, e.g.,
\cite{Gong1995Density,Gong1994Connectedness} and the references therein), and related
approximation ideas also appear in vector equilibrium problems and
Ekeland-type variational principles via approximating families of cones
(see \cite{Hai2021Ekeland}), we propose here a complementary, inner approximation
framework, based on nested cones contained in the underlying structure.
This explicit construction appears to be new in the context of co-radiant sets.
Inner approximating sequences constitute the key ingredient in the proof
of a general separation theorem for regular co-radiant sets, which
unifies and extends the existing separation theory for co-radiant sets.

The latter, the radial profile function, is defined for an arbitrary
co-radiant set, with no structural assumptions, and yields new
characterizations of the existence of norm-bases, as well as
sequence-based conditions for regularity under a solidness assumption on
the associated cone. In particular, norm-base existence is characterized
by the boundedness of the radial profile function; to the best of our
knowledge, this is the first such characterization in the literature.
Moreover, the radial profile function gives rise to a scalar function
leading to scalarization results for approximate efficiency, valid for
every co-radiant set and without requiring additional structural
assumptions on the underlying set, in contrast with related scalarization
schemes (e.g.\ \cite{GutierrezJimenezNovo2006MMOR,GaoYangTeo2011,SayadiBander2017,ZhaoChenYang2015,GarciaCastanoMelguizo2025JGO}).

Given two suitable co-radiant sets \((\mathcal C,\mathcal K)\), we prove a nonlinear separation theorem of Bishop--Phelps type, separating \(\mathcal C\) from the radial regions generated by \(\mathcal K\): the set \(\mathcal C\) is contained in the zero set of the resulting scalar residual, while this zero set is in turn contained in a prescribed radial-depth region associated with \(\mathcal K\). This unifies and extends the separation theorem of \cite{GarciaCastanoMelguizo2025JGO} from the norm-base setting to the regular setting, thereby enlarging the class of admissible separating pairs \((\mathcal C,\mathcal K)\). In particular, the resulting framework includes pairs associated with the principal co-radiant perturbation schemes underlying the classical notions of approximate efficiency recalled above; see \cite{Gutierrez2006a}.

Finally, we give a variational reading of the separation theorem. Error bounds estimate the distance to a solution or feasible set in terms of a scalar measure of violation. Their origins go back to Hoffman-type estimates and their extensions \cite{Hoffman1952,Robinson1975,Pang1997}, and they are closely related to stability properties of feasible-set mappings and solution mappings, including metric regularity and metric subregularity; see, for instance, \cite{RockafellarWets1998,DontchevRockafellar2014,Ioffe2000,FabianHenrionKrugerOutrata2010,KrugerNgaiThera2010,KrugerLopezThera2018}. In the present setting, nonlinear separation generates a nonnegative scalar residual measuring the violation of the separating inequality. We show that this residual controls both the radial-depth deficit and the distance to its zero set. The first estimate yields certificates of approximate radial feasibility for equilibrium problems and, in particular, for variational inequalities, in the spirit of residual and gap-function approaches \cite{CharithaDuttaLuke2015,DinhPham2020}. The second estimate yields an exact penalty formulation for optimization problems constrained by the zero set of the residual, in line with the classical theory of exact penalties and merit functions \cite{Burke1991,Ye2012,LeThiPhamDinhNgai2012}.

The paper is organized as follows. Section~2 fixes the notation and
discusses co-radiant sets, norm-bases, and the SSP. Section~3 studies inner approximations of cones and the
stability of the SSP under such approximations. Section~4 introduces
regular co-radiant sets and characterizes regularity via inner
approximating sequences. Section~5 develops the radial profile function,
its characterizations of norm-base existence and regularity, and its
scalarization consequences for approximate efficiency. Section~6 proves
the separation theorem for regular co-radiant sets. Section~7 derives the
radial-depth and metric error bounds induced by the separation residual,
and applies them to approximate radial feasibility in equilibrium
problems and variational inequalities, and to exact penalization.

\section{Preliminaries}
\label{sec:preliminaries}

Throughout the paper, \(X\) is a real normed space, \(X^*\) is its topological
dual, \(\|\cdot\|\) denotes the norm of \(X\), and \(\|\cdot\|_*\) the dual norm on
\(X^*\). We write
\[
B_X:=\{x\in X:\|x\|\le 1\}, \ \ 
B_X^\circ:=\{x\in X:\|x\|<1\}, \ \ S_X:=\{x\in X:\|x\|=1\}.
 \] 
for the closed unit ball, the open unit ball, and the unit sphere of \(X\),
respectively. The same notation is used in any normed space.

For \(t\in\R\), let \(t_+:=\max\{t,0\}\), and write
\(\R_+:=[0,+\infty)\) and \(\R_{++}:=(0,+\infty)\).

Let \(A\subset X\). We denote by \(\cl A\), \(\intt A\), \(\bd A\), \(A^c\), and
\(\co A\) its closure, interior, boundary, complement, and convex hull. For
\(r\ge0\), set \(A_{\ge r}:=\{a\in A:\|a\|\ge r\}\). For \(x\in X\), set
\(\mathrm{dist}(x,A):=\inf_{a\in A}\|x-a\|\) the distance from $x$ to $A$.

 A function \(f:A\to\R\) is Lipschitz continuous if
\(|f(x)-f(y)|\le L\|x-y\|\) for all \(x,y\in A\) and some \(L\ge0\). Its
Lipschitz constant is
\[
\Lip(f):=
\sup_{\substack{x,y\in A\\ x\neq y}}
\frac{|f(x)-f(y)|}{\|x-y\|}
=
\inf\{L\ge0:\ |f(x)-f(y)|\le L\|x-y\|\ \forall x,y\in A\}.
\]
Thus \(f\) is Lipschitz continuous if and only if \(\Lip(f)<+\infty\).

\subsection*{Cones and the SSP}
\label{subsec:cones-and-ssp}

A nonempty set \(C\subset X\) is a cone if \(\alpha C\subset C\) for every
\(\alpha\in\R_+\). Unless otherwise stated, cones are nontrivial:
\(\{0_X\}\subsetneq C\subset X\). A cone is pointed if
\(C\cap(-C)=\{0_X\}\), and solid if \(\intt C\neq\varnothing\). For
\(A\subset X\), set \(\cone(A):=\{\lambda a:\lambda\ge0,\ a\in A\}\).

For a cone \(C\subset X\), set
\[
\begin{aligned}
C^*&:=\{f\in X^*: f(x)\ge0\ \forall x\in C\},\\
C^\#&:=\{f\in C^*: f(x)>0\ \forall x\in C\setminus\{0_X\}\},\\
C^{a\#}_+&:=\{(f,\alpha)\in C^\#\times\R_{++}: f(x)-\alpha\|x\|>0\ \forall x\in C\setminus\{0_X\}\}.
\end{aligned}
\]
\begin{definition}
\label{def:ssp}
Let \(C,K\subset X\) be cones. The pair \((C,K)\) verifies the strict separation
property, briefly SSP, if
\[
0_X\notin
\cl\Bigl(
\co(C\cap S_X)-
\co\bigl((\bd K\cap S_X)\cup\{0_X\}\bigr)
\Bigr).
\]
\end{definition}

We next recall an analytic form of the SSP which will be used throughout the paper.

\begin{theorem}[{\cite[Theorem 3.1]{GarciaCastanoMelguizoParzanese2023}}]
\label{thm:primer_tma_separacion_C_y_K}
Let \(C,K\subset X\) be cones. The following assertions are equivalent:
\begin{enumerate}[label=\textup{(\roman*)}]
\item \((C,K)\) verifies the SSP. 
\item There exist \(0<\delta_1<\delta_2\) and \(f\in X^*\) such that
\((f,\alpha)\in C^{a\#}_+\) and
\(f(x)+\alpha\|x\|<0<f(y)+\alpha\|y\|\)
for all \(\alpha\in(\delta_1,\delta_2)\), all
\(x\in-\cl\co(C)\setminus\{0_X\}\), and all
\(y\in\bd(-K)\setminus\{0_X\}\).
\end{enumerate}
\end{theorem}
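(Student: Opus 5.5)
The plan is to prove the equivalence by Hahn--Banach separation in the product of $X$ with the reals, passing between the SSP and the analytic inequalities through homogeneity of $f+\alpha\|\cdot\|$.

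Set $A:=\co(C\cap S_X)$ and $B:=\co\bigl((\bd K\cap S_X)\cup\{0_X\}\bigr)$.

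\textbf{(i)$\Rightarrow$(ii).} Assume $0_X\notin\cl(A-B)$. Then there is $r>0$ with $(A-B)\cap rB_X^\circ=\varnothing$. The set $A-B$ is convex, so Hahn--Banach strictly separates $A-B$ from the open ball $rB_X^\circ$. This yields $g\in X^*$ with $\|g\|_*=1$ and
\[
g(a)-g(b)\ge r\quad\text{for all } a\in A,\ b\in B.
\]
Taking $b=0_X$ gives $g\ge r$ on $C\cap S_X$. Let $m:=\inf_{C\cap S_X} g\ge r$ and $M:=\sup_{\bd K\cap S_X} g\le m-r$.

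Put $f:=-g$. For $x\in C\cap S_X$ we get $f(x)+\alpha\le -m+\alpha$, which is negative when $\alpha<m$. For $y\in\bd K\cap S_X$ we have $-y\in\bd(-K)\cap S_X$ and $f(-y)+\alpha=g(y)+\alpha$. To make this positive we need $\alpha>-g(y)$ uniformly.

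This is where the sign convention has to be checked carefully. The condition $(f,\alpha)\in C^{a\#}_+$ requires $f(x)-\alpha\|x\|>0$ on $C$. So I suspect the intended $f$ is $g$ itself, with the displayed inequality read on $-C$ and $\bd(-K)$. Take $f=g$. For $x\in -C\cap S_X$ we have $f(x)+\alpha\le -m+\alpha<0$ whenever $\alpha<m$. Also $f-\alpha\|\cdot\|>0$ on $C\setminus\{0_X\}$ exactly when $\alpha<m$. For $y\in \bd(-K)\cap S_X$ we have $f(y)+\alpha=-g(-y)+\alpha\ge -M+\alpha$, which is positive when $\alpha>M$.

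So choose $\delta_1:=\max(M,0)$ and $\delta_2:=m$. Since $M\le m-r<m$, we get $\delta_1<\delta_2$. Because $f+\alpha\|\cdot\|$ is positively homogeneous, the inequalities pass from the sphere to the whole rays.

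The inequality on $-\cl\co(C)$ also has to be handled. The function $f+\alpha\|\cdot\|$ is convex, so $f+\alpha\|\cdot\|\le0$ propagates from $-C$ to convex combinations and then, by continuity, to closures. For strictness, take a strictly smaller $\alpha'$, with $\alpha<\alpha'<m$: every $z\in\cl\co(-C)$ satisfies $f(z)\le-\alpha'\|z\|$, so $f(z)+\alpha\|z\|\le(\alpha-\alpha')\|z\|<0$ for $z\ne0$. The step I expect to be the main obstacle is obtaining the bound $f(z)\le-\alpha'\|z\|$ on $\cl\co(-C)$. It uses that $\|\cdot\|$ is subadditive while $f$ is linear, so a convex combination of points of norm one can have smaller norm, and the bound survives. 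If this does not close, I will shrink $\delta_2$ to strictly below $m$.

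\textbf{(ii)$\Rightarrow$(i).} Fix $\alpha\in(\delta_1,\delta_2)$ and $\alpha'\in(\delta_1,\alpha)$, and use both. From (ii) with $\alpha'$, we get $f(x)<-\alpha'$ on $-C\cap S_X$ and $f(y)>-\alpha'$ on $\bd(-K)\cap S_X$. Evaluating at $\alpha$ instead gives the quantitative gaps $f\le-\alpha$ on $-C\cap S_X$ and $f\ge-\alpha'$ on $\bd(-K)\cap S_X$. Hence $g:=-f$ satisfies $g\ge\alpha$ on $C\cap S_X$, and hence on $A$ by convexity. Likewise $g\le\alpha'$ on $\bd K\cap S_X$, and $g(0_X)=0$, so $g\le\max(\alpha',0)$ on $B$.

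Therefore $g\ge\alpha-\alpha'>0$ on $A-B$, and so on $\cl(A-B)$. Since $g(0_X)=0$, this excludes $0_X\in\cl(A-B)$, which is the SSP.
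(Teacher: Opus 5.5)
This statement is only quoted in the paper, from \cite{GarciaCastanoMelguizoParzanese2023}, so there is no in-paper proof to compare with. Your route is the natural one and works: Hahn--Banach separation of $0_X$ from the convex set $A-B$, plus positive homogeneity and the triangle inequality to pass between $S_X$, rays and convex hulls. However, (ii)$\Rightarrow$(i) contains a sign error that makes the key inequality false as written.

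You correctly obtain $f<-\alpha$ on $-C\cap S_X$ and $f>-\alpha'$ on $\bd(-K)\cap S_X$. Writing $x=-c$ and $y=-w$ with $c\in C\cap S_X$ and $w\in\bd K\cap S_X$, this reads $f(c)>\alpha$ and $f(w)<\alpha'$. The first of these is just $(f,\alpha)\in C^{a\#}_+$. Hence $g:=-f$ satisfies $g<-\alpha$ on $C\cap S_X$, the opposite of your claim $g\ge\alpha$; your bounds for $g$ actually hold on $-C\cap S_X$ and $\bd(-K)\cap S_X$. The fix is to work with $f$ itself. Then $f\ge\alpha$ on $A=\co(C\cap S_X)$ and $f\le\max\{\alpha',0\}=\alpha'$ on $B$. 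So $f\ge\alpha-\alpha'>0$ on $\cl(A-B)$, and since $f(0_X)=0$, you get $0_X\notin\cl(A-B)$. Alternatively, keep $g$ on the negated sets and note that they generate $-(A-B)$.

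In (i)$\Rightarrow$(ii), the choice $\delta_1:=\max\{M,0\}$ gives $\delta_1=0$ whenever $M\le 0$, for instance when $\bd K\cap S_X=\varnothing$. That violates the requirement $0<\delta_1$. Since $m\ge\max\{M,0\}+r$, you can take any $\delta_1\in(\max\{M,0\},m)$ together with $\delta_2:=m$.

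Drop the false start with $f=-g$; with $f=g$ the rest is correct. The step you flag as the main obstacle is immediate. For $z=\sum_i\lambda_ic_i$ with $c_i\in C$, one has $g(z)=\sum_i\lambda_ig(c_i)\ge m\sum_i\lambda_i\|c_i\|\ge m\|z\|$, and this passes to $\cl\co(C)$ by continuity. Hence $f(x)+\alpha\|x\|\le(\alpha-m)\|x\|<0$ on $-\cl\co(C)\setminus\{0_X\}$ for every $\alpha<m$, with no auxiliary $\alpha'$ needed.

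Finally, state explicitly that $\bd K$ is invariant under multiplication by any $t>0$, since $x\mapsto tx$ is a homeomorphism mapping $K$ onto $K$. This is what reduces $y\in\bd(-K)\setminus\{0_X\}$ to $y/\|y\|\in\bd(-K)\cap S_X$ when you extend the inequalities from the sphere to whole rays.
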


\subsection*{Co-radiant Sets}
\label{subsec:co-radiant-sets}

\begin{definition}
\label{def:co-radiant-set}
A set \(\cK\subset X\) is co-radiant if \(\lambda x\in\cK\) for every
\(x\in\cK\) and every \(\lambda\ge1\).
\end{definition}
Unless otherwise stated, co-radiant sets are nontrivial and do not contain the
origin: \(\varnothing\neq\cK\subset X\) and \(0_X\notin\cK\). A co-radiant set
\(\cK\) is solid if \(\intt \cK\neq\varnothing\). Every cone is co-radiant. If \(\eta>0\), set \(\cK(\eta):=\eta\cK\). For
\(A\subset X\), set \(\operatorname{sdw}(A):=\{\lambda a:a\in A,\ \lambda\ge1\}\).

\begin{definition}
\label{def:norm-base-coradiant-set}
Let \(\cK\subset X\) be co-radiant and let \(t>0\). The section
\(\cK_{[t]}:=\cK\cap tS_X\) is a norm-base of \(\cK\) if for every $x\in K$ there exist $y\in K_{[t]}$ and $\lambda>0$ such that $x=\lambda y$.
\end{definition}

\begin{proposition}
\label{prop:norm_base_tail_characterization}
A co-radiant set \(\cK\) admits a norm-base if and only if
\(\cK_{\ge t}=(\cone(\cK))_{\ge t}\) for some \(t>0\).
\end{proposition}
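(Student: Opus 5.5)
We prove the two implications directly from Definition~\ref{def:norm-base-coradiant-set}, using only the co-radiance of \(\cK\) and the definition of \(\cone(\cK)\). One inclusion is free: \(\cK\subset\cone(\cK)\), so \(\cK_{\ge t}\subset(\cone(\cK))_{\ge t}\) for every \(t>0\). Hence in each direction only the reverse inclusion, or its consequence, needs attention.

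\emph{Necessity.} Suppose \(\cK_{[t]}\) is a norm-base for some \(t>0\). We claim that \(\cK_{\ge t}=(\cone(\cK))_{\ge t}\).
\begin{itemize}
\item[] Take \(z\in(\cone(\cK))_{\ge t}\) and write \(z=\mu x\) with \(\mu\ge0\) and \(x\in\cK\). Since \(\|z\|\ge t>0\), we have \(\mu>0\).
\item[] By the norm-base property, \(x=\lambda y\) with \(y\in\cK_{[t]}\) and \(\lambda>0\). Thus \(z=\mu\lambda y\), with \(\|y\|=t\).
\item[] Then \(\|z\|=\mu\lambda t\ge t\), so \(\mu\lambda\ge1\).
\item[] Co-radiance gives \(z=(\mu\lambda)y\in\cK\), hence \(z\in\cK_{\ge t}\).
\end{itemize}

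\emph{Sufficiency.} Suppose \(\cK_{\ge t}=(\cone(\cK))_{\ge t}\) for some \(t>0\). We show that \(\cK_{[t]}\) is a norm-base.
\begin{itemize}
\item[] Let \(x\in\cK\). Since \(0_X\notin\cK\), we have \(x\neq0_X\).
\item[] Set \(y:=\frac{t}{\|x\|}x\). Then \(y\in\cone(\cK)\) and \(\|y\|=t\), so \(y\in(\cone(\cK))_{\ge t}=\cK_{\ge t}\).
\item[] Hence \(y\in\cK\cap tS_X=\cK_{[t]}\).
\item[] Finally \(x=\lambda y\) with \(\lambda=\|x\|/t>0\), as the definition requires.
\end{itemize}

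The only subtle step is the use of co-radiance in the necessity part. There one must check that the scalar \(\mu\lambda\) is at least \(1\). This follows from the norm bound, because all points of the norm-base have norm exactly \(t\). Everywhere else we use the standing convention \(0_X\notin\cK\), which guarantees that normalizing a point of \(\cK\) is legitimate.
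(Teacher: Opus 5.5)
Your proof is correct and follows the paper's argument step for step. In the forward direction you factor a point of \((\cone(\cK))_{\ge t}\) through the norm-base and use the norm comparison with \(\|y\|=t\) to get a scalar at least \(1\); in the converse you rescale \(x\in\cK\) to norm \(t\) and invoke the tail identity. Your explicit check that \(\mu>0\) (since \(\cone(\cK)\) allows the scalar \(0\)) is a small point the paper leaves implicit.
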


\begin{proof}
Assume that \(\cK\cap tS_X\) is a norm-base of \(\cK\). Since
\(\cK\subset\cone(\cK)\), it suffices to prove
\((\cone(\cK))_{\ge t}\subset\cK_{\ge t}\). Let
\(y\in(\cone(\cK))_{\ge t}\). Write \(y=\lambda k\), with \(\lambda>0\) and
\(k\in\cK\), and \(k=\mu k_t\), with \(\mu>0\) and
\(k_t\in\cK\cap tS_X\). Then \(y=\lambda\mu k_t\). Since
\(\|y\|\ge t=\|k_t\|\), one has \(\lambda\mu\ge1\), and therefore \(y\in\cK\).

Conversely, assume that \(\cK_{\ge t}=(\cone(\cK))_{\ge t}\). If \(k\in\cK\),
then \(k\neq0_X\) and \(t k/\|k\|\in(\cone(\cK))_{\ge t}=\cK_{\ge t}\). Hence
\(t k/\|k\|\in\cK\cap tS_X\), and
\(k=(\|k\|/t)(t k/\|k\|)\). Thus \(\cK\cap tS_X\) is a norm-base of \(\cK\).
\end{proof}

\subsection*{Basic Examples}
\label{subsec:basic-examples}

We start with a co-radiant set which admits a norm-base.

\begin{example}
\label{ex:K0-norm-base}
In \(X=\R^2\), set \(\cK_0:=\{z\in\Rp^2:\|z\|_2\ge1\}\). Then \(\cK_0\)
is co-radiant and admits a norm-base.
\end{example}

\begin{proof}
Co-radiance is immediate. Moreover, \(S_X\cap\Rp^2\) is a norm-base of
\(\cK_0\), since every \(z\in\cK_0\) satisfies
\(z=\|z\|_2\,z/\|z\|_2\) with \(z/\|z\|_2\in S_X\cap\Rp^2\).
\end{proof}

\noindent The next examples show that co-radiance alone does not ensure the existence of a norm-base.

\begin{example}
\label{ex:K1-no-norm-base}
In \(X=\R^2\), let \(\cK_1:=(1,1)+\Rp^2=\{(x,y)\in\R^2:x\ge1,\ y\ge1\}\).
Then \(\cK_1\) is co-radiant and has no norm-base.
\end{example}

\begin{proof}
Co-radiance is immediate. Let \(t>0\). For \(s\in(0,1)\), set
\(u_s:=(s,\sqrt{1-s^2})\in S_X\). The ray \(\Rp u_s\) meets \(\cK_1\), since
\(\rho u_s\in\cK_1\) for all sufficiently large \(\rho\). Choose
\(s_t\in(0,1)\) with \(t s_t<1\). Then \(t u_{s_t}\notin\cK_1\), although
\(\Rp u_{s_t}\cap\cK_1\neq\varnothing\). Hence \(\cK_1\cap tS_X\) is not a
norm-base. Since \(t\) is arbitrary, \(\cK_1\) has no norm-base.
\end{proof}

\begin{example}
\label{ex:K2-no-norm-base}
In \(X=\R^2\), let \(\cK_2:=\{(x,y):x>0,\ y\ge1/x\}\). Then \(\cK_2\)
is co-radiant and has no norm-base.
\end{example}

\begin{proof}
If \((x,y)\in\cK_2\) and \(\lambda\ge1\), then
\(\lambda y\ge\lambda/x\ge1/(\lambda x)\); hence \(\lambda(x,y)\in\cK_2\).

Let \(t>0\). For \(s\in(0,1)\), set \(u_s:=(s,\sqrt{1-s^2})\). Then
\(\Rp u_s\cap\cK_2\neq\varnothing\), while
\(t u_s\in\cK_2\) iff \(t^2s\sqrt{1-s^2}\ge1\). Choose \(s\in(0,1)\) with
\(t^2s\sqrt{1-s^2}<1\). Then \(t u_s\notin\cK_2\), so
\(\cK_2\cap tS_X\) is not a norm-base. Since \(t\) is arbitrary,
\(\cK_2\) has no norm-base.
\end{proof}

\noindent The same phenomenon also occurs for Bishop--Phelps type co-radiant sets.

\begin{example}
\label{ex:K3-no-norm-base}
Let \(X=\R^2\) be Euclidean, let \(f\in X^*\setminus\{0\}\),
\(0<\alpha<\|f\|_*\), and \(\lambda>0\). Set
\(\cK_3:=\{x\in X:f(x)-\alpha\|x\|\ge\lambda\}.\)
Then \(\cK_3\) is co-radiant and has no norm-base.
\end{example}

\begin{proof}
Co-radiance follows from the positive homogeneity of
\(x\mapsto f(x)-\alpha\|x\|\).

Let \(t>0\). Since \(0<\alpha<\|f\|_*\), the cone
\(C(f,\alpha):=\{x\in X:f(x)-\alpha\|x\|\ge0\}\)
is solid. Choose \(u\in S_X\cap\intt C(f,\alpha)\) such that
\(0<f(u)-\alpha<\lambda/t.\)
Then \(\Rp u\cap\cK_3\neq\varnothing\), but \(tu\notin\cK_3\). Hence
\(\cK_3\cap tS_X\) is not a norm-base. Since \(t>0\) is arbitrary, \(\cK_3\)
has no norm-base.
\end{proof}

\noindent The last example will be used later to show that regularity is a genuine additional property.

\begin{example}
\label{ex:K4-no-norm-base}
Let \(X=\R^2\) be Euclidean. For \(s\in [0,+\infty)\), set
\(u_s:=(1,s)/\sqrt{1+s^2}\), and set \(u_\infty:=(0,1)\). Define
\(\phi:[0,+\infty]\to[1,+\infty)\) by
\[
\phi(s):=
\begin{cases}
1, & s\notin\Q,\\[1mm]
q, & s=p/q\in\Q,\ \gcd(p,q)=1,\ q\ge1,
\end{cases}
\qquad
\phi(\infty):=1.
\]
Let \(\cK_4:=\{r u_s:s\in[0,+\infty],\ r\ge\phi(s)\}\). Then \(\cK_4\) is
co-radiant and has no norm-base.
\end{example}

\begin{proof}
Co-radiance follows directly from the definition. Let \(t>0\). Choose
\(q\in\N\) with \(q>t\), and set \(s_q:=1/q\). Then \(\phi(s_q)=q\). Hence
\(q u_{s_q}\in\cK_4\), whereas \(t u_{s_q}\notin\cK_4\). Thus
\(\cK_4\cap tS_X\) is not a norm-base. Since \(t\) is arbitrary, \(\cK_4\) has
no norm-base.
\end{proof}

\section{Inner Approximations of Cones and Stability of the SSP}
\label{sec:inner_approximations_cones}
In this section, we develop the conical framework underlying the notion of regularity. We begin by introducing the interior reductions of a cone and use them to define a new sequential approximation concept, namely, an inner approximating sequence. Such a sequence consists of nested closed cones contained in the original cone and is designed to eventually absorb all its interior reductions. The main result of this section, Theorem~\ref{teo:ssp_inner_approximating_sequence}, shows that the strict separation property (SSP) of a pair \((C,K)\) is inherited by \((C,K_n)\) for all sufficiently large indices. This stability property constitutes the key technical ingredient for the separation theorem established later, allowing the general case to be reduced to the norm-base setting through an appropriate approximating family of cones.

\subsection{Interior Reductions of Cones}
\label{subsec:interior_reductions}

Let \(K\) be a cone. We remove from \(K\) the rays generated by directions
close to \(\bd K\cap S_X\).

\begin{definition}
\label{def:interior_reduction_cone}
Let \(K\subset X\) be a cone. Set
\(\vee K:=\bd K\cap S_X.\)
For \(\varepsilon>0\), define
\[
\vee_\varepsilon K:=\vee K+\varepsilon B_X^\circ,
\qquad
\Sigma_\varepsilon(K):=\cone(\vee_\varepsilon K)\setminus\{0_X\}.
\]
The \(\varepsilon\)-interior reduction of \(K\) is
\(K^{-\varepsilon}:=K\setminus\Sigma_\varepsilon(K).\)
\begin{figure}[H]
    \centering
    \includegraphics[scale=0.112]{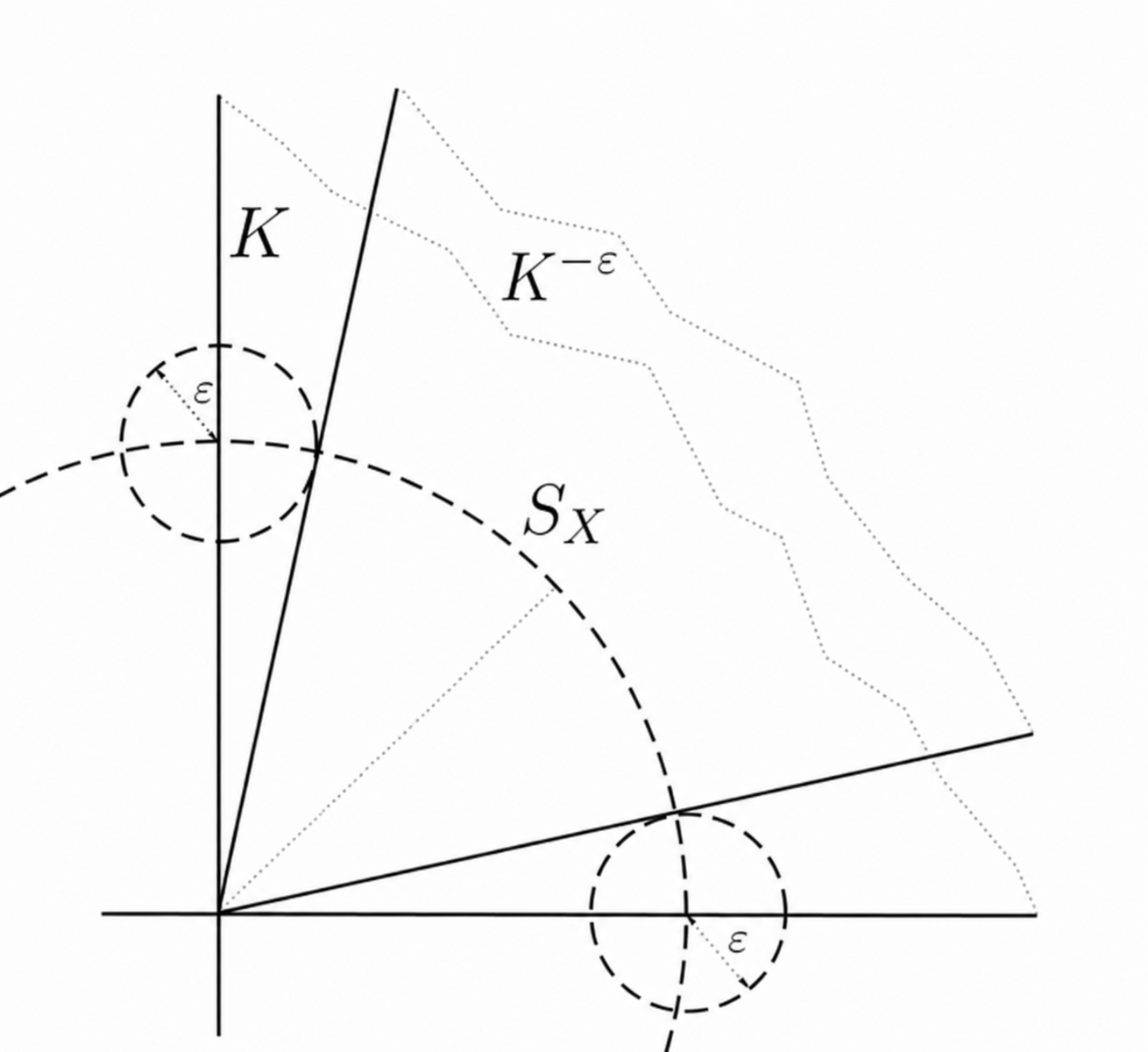}
    \caption{\(\varepsilon\)-interior reduction of \(K\).}
    \label{fig:K_epsilon}
\end{figure}
\end{definition}

Thus \(K^{-\varepsilon}\) is the part of \(K\) whose directions remain
uniformly separated from \(\bd K\cap S_X\).

\begin{lemma}
\label{lem:basic_properties_interior_reductions}
Let \(K\subset X\) be a cone. Then:
\begin{enumerate}[label=\textup{(\roman*)}]
\item \(K^{-\varepsilon}\subset K\) for every \(\varepsilon>0\).
\item If \(0<\varepsilon_1\le\varepsilon_2\), then
\(K^{-\varepsilon_2}\subset K^{-\varepsilon_1}\).
\item \(K^{-\varepsilon}\) is a closed cone.
\item If \(K\) is solid, then \(K^{-\varepsilon}\setminus\{0_X\}\neq\varnothing\)
for all sufficiently small \(\varepsilon>0\).
\end{enumerate}
\end{lemma}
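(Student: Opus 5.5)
We verify the four items directly from Definition~\ref{def:interior_reduction_cone}. The two facts that carry most of the argument are that \(\Sigma_\varepsilon(K)\) is an open set invariant under positive scalings, and that \(\bd K\) is invariant under multiplication by positive scalars (since \(K\) is a cone).

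Items (i) and (ii) are immediate.
\begin{itemize}
\item[(i)] \(K^{-\varepsilon}=K\setminus\Sigma_\varepsilon(K)\subset K\) by definition.
\item[(ii)] If \(\varepsilon_1\le\varepsilon_2\), then \(\vee_{\varepsilon_1}K\subset\vee_{\varepsilon_2}K\). Hence \(\Sigma_{\varepsilon_1}(K)\subset\Sigma_{\varepsilon_2}(K)\), and removing the larger set from \(K\) leaves the smaller remainder.
\end{itemize}

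For (iii) we first record that \(\Sigma_\varepsilon(K)\) is open. Indeed, \(\vee_\varepsilon K\) is open, and
\[
\cone(\vee_\varepsilon K)\setminus\{0_X\}=\Bigl(\bigcup_{\lambda>0}\lambda\,\vee_\varepsilon K\Bigr)\setminus\{0_X\}
\]
is a union of open sets with one point removed, hence open. Both \(K\) and \(\Sigma_\varepsilon(K)\) are invariant under \(x\mapsto\lambda x\) for \(\lambda>0\), and \(0_X\in K\setminus\Sigma_\varepsilon(K)\); so \(K^{-\varepsilon}\) is a cone.

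For closedness, take \(x_n\in K^{-\varepsilon}\) with \(x_n\to x\). If \(x=0_X\) we are done, so assume \(x\neq0_X\). Then \(x\in\cl K\), and since the complement of \(\Sigma_\varepsilon(K)\) is closed, \(x\notin\Sigma_\varepsilon(K)\). It remains to show \(x\in K\). If \(x\notin\intt K\), then \(x\in\bd K\). As \(\bd K\) is stable under positive scalings, \(x/\|x\|\in\vee K\subset\vee_\varepsilon K\), so \(x\in\Sigma_\varepsilon(K)\), a contradiction. Hence \(x\in\intt K\subset K\).

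This is the only point needing care: \(K\) is not assumed closed. The argument handles it because every nonzero point of \(\cl K\setminus K\) lies on a boundary ray, which is swallowed by \(\Sigma_\varepsilon(K)\).

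For (iv), pick \(x_0\in\intt K\cap S_X\), which exists by solidness and the cone property, and choose \(r>0\) with \(x_0+rB_X\subset K\). We claim \(x_0\notin\Sigma_\varepsilon(K)\) whenever \(\varepsilon<r/2\); then \(x_0\in K^{-\varepsilon}\setminus\{0_X\}\).

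If \(\vee K=\varnothing\), then \(\Sigma_\varepsilon(K)=\varnothing\) and there is nothing to prove. Otherwise, suppose \(x_0=\lambda(b+e)\) with \(b\in\vee K\), \(\|e\|<\varepsilon\) and \(\lambda>0\). Then \(1/\lambda=\|b+e\|\in(1-\varepsilon,1+\varepsilon)\). This gives
\[
\|b-x_0\|=\|x_0/\lambda-e-x_0\|\le|1/\lambda-1|+\|e\|<2\varepsilon<r.
\]
Thus \(b\) lies in the open ball of radius \(r\) about \(x_0\), which is contained in \(\intt K\). This contradicts \(b\in\bd K\).

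The main obstacle in the whole proof is this elementary estimate relating the scaling factor \(\lambda\) to the distance from \(b\) to \(x_0\); the rest is bookkeeping.
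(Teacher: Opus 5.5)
Your proof is correct and follows the paper's argument essentially step by step. In (iii) you use the same observation that a nonzero limit point outside $K$ (in your version, outside $\intt K$) lies on a boundary ray absorbed by the open set $\Sigma_\varepsilon(K)$; in (iv) you use the same $2\varepsilon$ estimate $\|b-x_0\|\le\bigl|\|b+e\|-1\bigr|+\|e\|$, and you additionally justify the openness of $\Sigma_\varepsilon(K)$, which the paper only asserts, and drop the paper's unnecessary restriction $\varepsilon<1$.
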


\begin{proof}
Set \(\Sigma_\varepsilon(K):=\cone(\vee_\varepsilon K)\setminus\{0_X\}\).
Then \(K^{-\varepsilon}=K\setminus\Sigma_\varepsilon(K)\).

\textup{(i)} is immediate.

\textup{(ii)} If \(0<\varepsilon_1\le\varepsilon_2\), then
\[
\vee_{\varepsilon_1}K\subset\vee_{\varepsilon_2}K,
\qquad
\Sigma_{\varepsilon_1}(K)\subset\Sigma_{\varepsilon_2}(K).
\]
Hence \(K^{-\varepsilon_2}\subset K^{-\varepsilon_1}\).

\textup{(iii)} Let \(x\in K^{-\varepsilon}\) and \(\lambda\ge0\). Since \(K\) is
a cone, \(\lambda x\in K\). If \(\lambda=0\), then
\(\lambda x=0_X\notin\Sigma_\varepsilon(K)\). If \(\lambda>0\) and
\(\lambda x\in\Sigma_\varepsilon(K)\), then \(x\in\Sigma_\varepsilon(K)\), a
contradiction. Thus \(\lambda x\in K^{-\varepsilon}\), and
\(K^{-\varepsilon}\) is a cone.

Let \((x_n)\subset K^{-\varepsilon}\) and \(x_n\to x\). Since
\(\Sigma_\varepsilon(K)\) is open, \(x\notin\Sigma_\varepsilon(K)\). We prove
that \(x\in K\). This is clear if \(x=0_X\). Assume that \(x\neq0_X\) and
\(x\notin K\). Since \(x_n\in K\) and \(x_n\to x\), we have
\(x\in\cl K\setminus K\subset\bd K\). As \(K\) is a cone,
\(\frac{x}{\|x\|}\in\bd K\cap S_X=\vee K.\)
Hence
\(x\in\cone(\vee K)\setminus\{0_X\}\subset\Sigma_\varepsilon(K),\)
a contradiction. Therefore \(x\in K\), and so
\(x\in K\setminus\Sigma_\varepsilon(K)=K^{-\varepsilon}\).

\textup{(iv)} Choose \(u\in\intt K\cap S_X\). There exists \(\delta>0\) such that
\(u+\delta B_X \subset K\subset\intt K\). Hence \(\dist(u,\vee K)\ge\delta\). Let
\(0<\varepsilon<\min\{1,\delta/2\}.\)
If \(u\in\Sigma_\varepsilon(K)\), then
\(u=t(v+b)\)
for some \(t>0\), \(v\in\vee K\), and \(b\in B_X^\circ\) with
\(\|b\|<\varepsilon\). Since \(\|u\|=\|v\|=1\), one has
\(u=\frac{v+b}{\|v+b\|}.\)
Therefore
\(\|u-v\| \le |1-\|v+b\||+\|b\| <2\varepsilon<\delta,\)
which contradicts \(\dist(u,\vee K)\ge\delta\). Thus
\(u\in K^{-\varepsilon}\setminus\{0_X\}\).
\end{proof}

\subsection{Inner Approximating Sequences}
\label{subsec:sequential_inner_approximations}

We introduce a sequential notion of inner approximation by closed cones, tailored to the separation arguments below.
\begin{definition}
\label{def:inner_approximating_sequence}
Let \(K\subset X\) be a cone. A sequence \((K_n)\) of cones is an
\emph{inner approximating sequence} for \(K\) if:
\begin{enumerate}[label=\textup{(\roman*)}]
\item \(K_n\subset K_{n+1}\subset K\) for every \(n\in\N\).
\item For every \(\varepsilon>0\), there exists \(n_\varepsilon\in\N\) such that
\(K^{-\varepsilon}\subset K_n \qquad \forall n\ge n_\varepsilon.\)
\end{enumerate}
If, in addition, each \(K_n\) is closed, then \((K_n)\) is called a
\emph{closed inner approximating sequence}.
\end{definition}

\begin{proposition}
\label{prop:canonical_inner_approximating_sequence}
Let \(K\subset X\) be a solid cone. Then \((K^{-1/n})\) is a closed inner
approximating sequence for \(K\).
\end{proposition}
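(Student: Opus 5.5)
This is a direct verification of the two conditions in Definition~\ref{def:inner_approximating_sequence}, with each $K^{-1/n}$ a cone by Lemma~\ref{lem:basic_properties_interior_reductions}. Set $K_n:=K^{-1/n}$.

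First, each $K_n$ is a closed cone by Lemma~\ref{lem:basic_properties_interior_reductions}(iii). Condition~(i) follows from items (i) and (ii) of the same lemma. Since $1/(n+1)\le 1/n$, item (ii) with $\varepsilon_1=1/(n+1)$ and $\varepsilon_2=1/n$ gives $K^{-1/n}\subset K^{-1/(n+1)}$, and item (i) gives $K^{-1/(n+1)}\subset K$.

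For condition~(ii), fix $\varepsilon>0$ and choose $n_\varepsilon\in\N$ with $1/n_\varepsilon\le\varepsilon$. For every $n\ge n_\varepsilon$ we have $1/n\le\varepsilon$, so Lemma~\ref{lem:basic_properties_interior_reductions}(ii) yields $K^{-\varepsilon}\subset K^{-1/n}=K_n$.

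Solidity is not used in verifying (i) and (ii). Its role, via Lemma~\ref{lem:basic_properties_interior_reductions}(iv), is to guarantee that $K_n\setminus\{0_X\}\neq\varnothing$ for all large $n$, so that the $K_n$ are eventually nontrivial cones in the sense of the paper's standing convention. If nontriviality is demanded for every index, one can discard the finitely many initial trivial terms, or reindex by starting the sequence at an $n_0$ with $1/n_0$ below the threshold in (iv); this preserves monotonicity and (ii). There is no real obstacle here. The only point needing care is this nontriviality convention for cones, and I would address it with a brief remark rather than a new argument.
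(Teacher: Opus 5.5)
Your proposal is correct and follows the paper's proof essentially verbatim. Closedness and the cone property come from Lemma~\ref{lem:basic_properties_interior_reductions}(iii), nestedness from items (i)--(ii), and absorption of $K^{-\varepsilon}$ from choosing $1/n_\varepsilon\le\varepsilon$; your treatment of nontriviality via item (iv), discarding initial trivial terms, matches the remark the paper places right after the proposition.
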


\begin{proof}
Set \(K_n:=K^{-1/n}\). By
Lemma~\ref{lem:basic_properties_interior_reductions}, each \(K_n\) is a
closed cone and \(K_n\subset K\). Since \(1/(n+1)<1/n\),
\(K_n=K^{-1/n}\subset K^{-1/(n+1)}=K_{n+1}.\)
Let \(\varepsilon>0\). Choose \(n_\varepsilon\in\N\) such that
\(1/n_\varepsilon\le\varepsilon\). If \(n\ge n_\varepsilon\), then
\(1/n\le\varepsilon\), and therefore
\(K^{-\varepsilon}\subset K^{-1/n}=K_n.\)
Thus \((K^{-1/n})\) is a closed inner approximating sequence for \(K\).
\end{proof}

\begin{remark}
By Lemma~\ref{lem:basic_properties_interior_reductions}\textup{(iv)},
the cones \(K^{-1/n}\) are nontrivial for all sufficiently large \(n\).
Whenever some initial terms are trivial, they will be tacitly discarded
and the resulting tail reindexed. This convention does not affect any of
the subsequent arguments.
\end{remark}

\begin{proposition}
\label{prop:inner_approximating_sequence_covers_interior}
Let \(K\subset X\) be a solid cone and let \((K_n)\) be an inner approximating
sequence for \(K\). Then
\(\intt K\subset\bigcup_{n\in\N}K_n\subset K.\)
\end{proposition}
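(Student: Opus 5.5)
The right inclusion \(\bigcup_n K_n\subset K\) is immediate from Definition~\ref{def:inner_approximating_sequence}\textup{(i)}. For the left inclusion, the plan is to show that every point of \(\intt K\) lies in some interior reduction \(K^{-\varepsilon}\). Property \textup{(ii)} of an inner approximating sequence then places it in \(K_{n_\varepsilon}\).

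First I would dispose of \(x=0_X\). Since \(K\) is solid, \(\intt K\neq\varnothing\). By Lemma~\ref{lem:basic_properties_interior_reductions}\textup{(iv)} and \textup{(iii)}, \(K^{-\varepsilon}\) is a cone for some small \(\varepsilon\), so it contains \(0_X\). Hence \(0_X\in K_{n_\varepsilon}\). In any case, each \(K_n\) is a nonempty cone and therefore contains \(0_X\).

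Now let \(x\in\intt K\setminus\{0_X\}\) and set \(u:=x/\|x\|\). Because \(K\) is a cone and scaling by \(1/\|x\|\) is a homeomorphism, \(u\in\intt K\cap S_X\). Choose \(\delta>0\) with \(u+\delta B_X\subset K\). I would repeat the argument from the proof of Lemma~\ref{lem:basic_properties_interior_reductions}\textup{(iv)}. Every \(v\in\vee K=\bd K\cap S_X\) is a boundary point, so \(v\notin\intt K\), while \(u+\delta B_X^\circ\subset\intt K\). This gives \(\|u-v\|\ge\delta\), that is, \(\dist(u,\vee K)\ge\delta\).

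Take \(0<\varepsilon<\min\{1,\delta/2\}\). If \(u\in\Sigma_\varepsilon(K)\), then \(u=(v+b)/\|v+b\|\) for some \(v\in\vee K\) and \(\|b\|<\varepsilon\), which yields \(\|u-v\|<2\varepsilon<\delta\), a contradiction. Hence \(u\notin\Sigma_\varepsilon(K)\). Since \(\Sigma_\varepsilon(K)\) is invariant under positive scaling, \(x\notin\Sigma_\varepsilon(K)\) as well, so \(x\in K^{-\varepsilon}\). By Definition~\ref{def:inner_approximating_sequence}\textup{(ii)}, \(x\in K_n\) for all \(n\ge n_\varepsilon\).

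The only delicate point is the estimate \(\dist(u,\vee K)\ge\delta\) and the normalization step \(\|u-v\|\le|1-\|v+b\||+\|b\|<2\varepsilon\). Both are essentially already carried out in Lemma~\ref{lem:basic_properties_interior_reductions}\textup{(iv)}, so I expect no real obstacle. The essential observation is that \(\varepsilon\) may depend on the point \(x\), which is all that a union over \(n\) requires.
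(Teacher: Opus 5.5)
Your proof is correct and follows the paper's argument essentially step for step. You normalize $x\in\intt K\setminus\{0_X\}$ to $u\in\intt K\cap S_X$, obtain $\dist(u,\vee K)\ge\delta$, and use the estimate $\|u-v\|<2\varepsilon<\delta$ to exclude $u\in\Sigma_\varepsilon(K)$. You then pass back to $x$ and invoke property (ii) of the inner approximating sequence. The only differences are cosmetic: you transfer from $u$ to $x$ via positive-scaling invariance of $\Sigma_\varepsilon(K)$ rather than the cone property of $K^{-\varepsilon}$, and you justify $\dist(u,\vee K)\ge\delta$ explicitly through $u+\delta B_X^\circ\subset\intt K$.
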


\begin{proof}
The inclusion \(\bigcup_{n\in\N}K_n\subset K\) follows from the definition.

Let \(x\in\intt K\). If \(x=0_X\), then \(x\in K_n\) for every \(n\in\N\).
Assume that \(x\neq0_X\), and set \(u:=x/\|x\|\). Then
\(u\in\intt K\cap S_X\). Hence there exists \(\delta>0\) such that
\(u+\delta B_X \subset K\), and consequently \(\dist(u,\vee K)\ge\delta\).

Choose \(0<\varepsilon<\delta/2\). We prove that \(u\in K^{-\varepsilon}\).
If \(u\in\Sigma_\varepsilon(K)\), then \(u=t(v+b)\) for some \(t>0\),
\(v\in\vee K\), and \(b\in B_X^\circ\) with \(\|b\|<\varepsilon\). Since
\(\|u\|=\|v\|=1\),
\(u=\frac{v+b}{\|v+b\|},\)
and hence
\(\|u-v\| \le \left|1-\|v+b\|\right|+\|b\| <2\varepsilon<\delta,\)
a contradiction. Thus \(u\in K^{-\varepsilon}\). Since \(K^{-\varepsilon}\)
is a cone, \(x=\|x\|u\in K^{-\varepsilon}\).

By definition of inner approximating sequence, there exists \(n_x\in\N\)
such that \(K^{-\varepsilon}\subset K_{n_x}\). Hence \(x\in K_{n_x}\).
\end{proof}

\subsection{Stability of the SSP}
\label{subsec:ssp_stability_inner_approximations}

We prove that the SSP is stable under sufficiently accurate closed inner approximations of the second cone. \\

\noindent This lemma localizes the boundary of an interior reduction near \(\bd K\).

\begin{lemma}
\label{lem:boundary_interior_reduction_inclusion}
Let \(K\subset X\) be a cone and let \(0<\varepsilon<1\). Then
\[
\bd(K^{-\varepsilon})\setminus\{0_X\}
\subset
\cone\bigl(\vee K+3\varepsilon B_X^\circ\bigr).
\]
\end{lemma}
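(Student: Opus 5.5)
The plan is to reduce to unit vectors and then split according to why nearby points fail to lie in \(K^{-\varepsilon}\). Let \(x\in\bd(K^{-\varepsilon})\setminus\{0_X\}\) and set \(u:=x/\|x\|\). Since \(K^{-\varepsilon}\) is a cone by Lemma~\ref{lem:basic_properties_interior_reductions}\textup{(iii)}, and the map \(y\mapsto y/\|x\|\) is a homeomorphism preserving \(K^{-\varepsilon}\), we get \(u\in\bd(K^{-\varepsilon})\cap S_X\). The set \(\cone(\vee K+3\varepsilon B_X^\circ)\) is invariant under positive scaling. It therefore suffices to show
\[
u\in \vee K+3\varepsilon B_X^\circ ,
\qquad\text{that is,}\qquad
\|u-v\|<3\varepsilon \ \text{ for some } v\in\vee K .
\]

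Because \(u\) is a boundary point, there is a sequence \(z_n\to u\) with \(z_n\notin K^{-\varepsilon}\). We may assume \(z_n\neq0_X\). By definition, \(X\setminus K^{-\varepsilon}=K^c\cup\Sigma_\varepsilon(K)\). Passing to a subsequence, one of the following two cases holds.

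\emph{Case 1: \(z_n\notin K\) for all \(n\).} We have \(u\in K^{-\varepsilon}\subset K\), since \(K^{-\varepsilon}\) is closed. Hence \(u\in K\) is a limit of points of \(K^c\), so \(u\in\bd K\cap S_X=\vee K\). The required inclusion holds trivially with \(v=u\).

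\emph{Case 2: \(z_n\in\Sigma_\varepsilon(K)\) for all \(n\).} Write \(z_n=t_n(v_n+b_n)\) with \(t_n>0\), \(v_n\in\vee K\) and \(\|b_n\|<\varepsilon\). Since \(\varepsilon<1\), we have \(\|v_n+b_n\|\ge 1-\varepsilon>0\). Set \(w_n:=z_n/\|z_n\|=(v_n+b_n)/\|v_n+b_n\|\). The same estimate as in the proof of Lemma~\ref{lem:basic_properties_interior_reductions}\textup{(iv)} gives
\[
\|w_n-v_n\|\le\bigl|1-\|v_n+b_n\|\bigr|+\|b_n\|<2\varepsilon .
\]
Moreover \(w_n\to u\), because normalization is continuous away from \(0_X\) and \(\|u\|=1\). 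Choose \(n\) with \(\|u-w_n\|<\varepsilon\). Then \(\|u-v_n\|<3\varepsilon\), so \(u\in\vee K+3\varepsilon B_X^\circ\).

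The only delicate point is the bookkeeping in Case 2. One must pass from \(z_n\) to the normalized \(w_n\), which needs \(\varepsilon<1\) so that \(v_n+b_n\neq0_X\). One must also absorb the extra \(\varepsilon\) coming from the convergence \(w_n\to u\); this is exactly why the constant is \(3\varepsilon\) rather than \(2\varepsilon\).
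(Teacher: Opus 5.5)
Your proof is correct and is essentially the paper's argument: the paper splits via $\bd(K^{-\varepsilon})\subset\bd K\cup\bd\Sigma_\varepsilon(K)$, and you extract the same two cases ($u\in\bd K$, or $u$ a limit of points of $\Sigma_\varepsilon(K)$) from a sequence $z_n\to u$ lying outside $K^{-\varepsilon}$. You also spell out the normalization estimate that the paper only sketches; as a minor point, $v_n+b_n\neq 0_X$ already follows from $z_n\neq 0_X$, so the hypothesis $\varepsilon<1$ is not actually needed in your argument.
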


\begin{proof}
Let \(y\in\bd(K^{-\varepsilon})\setminus\{0_X\}\). Since
\(K^{-\varepsilon}=K\setminus\Sigma_\varepsilon(K)\),
\(\bd(K^{-\varepsilon})\subset \bd K\cup\bd\Sigma_\varepsilon(K).\)
If \(y\in\bd K\), then \(y=\|y\|v\) for some \(v\in\vee K\), and the conclusion
follows.

Assume that \(y\in\bd\Sigma_\varepsilon(K)\). Then
\(y\in\cl\Sigma_\varepsilon(K)\). Since \(y\neq0_X\), the direction
\(y/\|y\|\) belongs to the closure of the normalized directions generated by
\(\vee K+\varepsilon B_X^\circ\). Hence
\(\dist\left(\frac{y}{\|y\|},\vee K\right)\le2\varepsilon,\)
and therefore
\(\frac{y}{\|y\|}\in\vee K+3\varepsilon B_X^\circ.\)
Thus \(y\in\cone(\vee K+3\varepsilon B_X^\circ)\).
\end{proof}

\noindent The next proposition establishes the SSP stability for small interior reductions.

\begin{proposition}
\label{prop:reduced_cone_boundary_separation}
Let \(C,K\subset X\) be cones such that \((C,K)\) verifies the SSP. Then there
exists \(\varepsilon_0>0\) such that, for every
\(0<\varepsilon<\varepsilon_0\), the pair \((C,K^{-\varepsilon})\) verifies the
SSP whenever \(K^{-\varepsilon}\setminus\{0_X\}\neq\varnothing\).
\end{proposition}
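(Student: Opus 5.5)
Starting from the SSP of \((C,K)\), I would use Definition~\ref{def:ssp} directly and argue by uniform distance. Set
\[
A:=\co(C\cap S_X), \qquad B:=\co\bigl(\vee K\cup\{0_X\}\bigr).
\]
The SSP means there is \(r>0\) with \(\|a-b\|\ge r\) for all \(a\in A\), \(b\in B\); that is, \(\dist(A,B)\ge r\). The goal is to show that, for small \(\varepsilon\), every element of
\[
B_\varepsilon:=\co\bigl((\bd K^{-\varepsilon}\cap S_X)\cup\{0_X\}\bigr)
\]
lies within a controlled distance (of order \(\varepsilon\)) of \(B\). Then \(\dist(A,B_\varepsilon)\ge r-c\varepsilon>0\) once \(\varepsilon<\varepsilon_0:=\min\{1,r/(2c)\}\), which is exactly the SSP for \((C,K^{-\varepsilon})\). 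The hypothesis \(K^{-\varepsilon}\setminus\{0_X\}\neq\varnothing\) is needed only so that \(K^{-\varepsilon}\) is a nontrivial cone and the definition applies. Note that \(\bd K^{-\varepsilon}\cap S_X\) may be empty, in which case \(B_\varepsilon=\{0_X\}\subset B\) and there is nothing to prove.

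The key step is pointwise. Take \(y\in\bd(K^{-\varepsilon})\cap S_X\). By Lemma~\ref{lem:boundary_interior_reduction_inclusion}, \(y=t(v+b)\) with \(t>0\), \(v\in\vee K\), \(\|b\|<3\varepsilon\). Since \(\|y\|=\|v\|=1\), we get \(y=(v+b)/\|v+b\|\), and the same estimate used in Proposition~\ref{prop:inner_approximating_sequence_covers_interior} gives
\[
\|y-v\|\le \bigl|1-\|v+b\|\bigr|+\|b\|<6\varepsilon .
\]
Alternatively one can read off \(\dist(y,\vee K)\le 3\varepsilon\) from the lemma's proof. Either way, \(\dist(y,\vee K)<6\varepsilon\).

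Next, pass to convex combinations. Let \(z=\sum_i\lambda_i y_i+\lambda_0 0_X\) with \(y_i\in\bd K^{-\varepsilon}\cap S_X\). Choose \(v_i\in\vee K\) with \(\|y_i-v_i\|<6\varepsilon\). Then \(w:=\sum_i\lambda_i v_i\in B\) and \(\|z-w\|\le\sum_i\lambda_i\|y_i-v_i\|<6\varepsilon\). Hence \(B_\varepsilon\subset B+6\varepsilon B_X^\circ\), and for any \(a\in A\) we have \(\|a-z\|\ge\|a-w\|-6\varepsilon\ge r-6\varepsilon\). Taking closures preserves the bound, so \(0_X\notin\cl(A-B_\varepsilon)\) as soon as \(\varepsilon<r/6\). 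So I would take \(\varepsilon_0=\min\{1,r/12\}\), say.

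The main obstacle is the inclusion of Lemma~\ref{lem:boundary_interior_reduction_inclusion} itself: it requires \(\varepsilon<1\) and only provides the cone generated by a \(3\varepsilon\)-neighbourhood. The care needed is in converting this into a genuine metric estimate on the unit sphere, via the normalization argument above, so that the constant \(c\) is uniform in \(y\). Once that uniform estimate holds, the convexity step and the choice of \(\varepsilon_0\) are routine.
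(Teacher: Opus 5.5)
Your argument is correct, but it takes a different route from the paper.

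The paper never works with the distance in Definition~\ref{def:ssp}. Its proof runs in four steps:
\begin{enumerate}
\item It invokes the analytic characterization of the SSP (Theorem~\ref{thm:primer_tma_separacion_C_y_K}) to obtain $f$ and an interval of $\alpha$ with $f(x)+\alpha\|x\|<0<f(y)+\alpha\|y\|$ on $\clco(C)\setminus\{0_X\}$ and $\bd K\setminus\{0_X\}$.
\item A perturbation estimate shows that the positive inequality survives, on a slightly smaller interval $(\alpha_1,\alpha_2)$, on all of $\cone(\vee K+3\varepsilon B_X^\circ)\setminus\{0_X\}$.
\item This is transferred to $\bd(K^{-\varepsilon})\setminus\{0_X\}$ by Lemma~\ref{lem:boundary_interior_reduction_inclusion}.
\item The characterization is applied in the converse direction. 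This is the step where the nontriviality of $K^{-\varepsilon}$ is used.
\end{enumerate}

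You instead stay with the geometric definition. The normalization estimate $\|y-v\|\le 2\|b\|$ turns the same lemma into the inclusion $\co((\bd K^{-\varepsilon}\cap S_X)\cup\{0_X\})\subset\co(\vee K\cup\{0_X\})+6\varepsilon B_X$, and the SSP gap $r$ does the rest.

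What your route buys:
\begin{itemize}
\item It is shorter and self-contained, with no appeal to the external characterization in either direction.
\item It gives an explicit threshold $\varepsilon_0=\min\{1,r/12\}$ in terms of the gap $r$.
\item It shows that the nontriviality hypothesis is needed only so that $K^{-\varepsilon}$ counts as a cone under the paper's convention.
\end{itemize}

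What the paper's route buys: a single separating pair $(f,\alpha)$, with $\alpha\in(\alpha_1,\alpha_2)$, that works simultaneously for every $\varepsilon<\varepsilon_0$. The analytic separating data is thus itself stable under the reduction, which matches the functional form of separation used later in the paper.
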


\begin{proof}
Since \((C,K)\) verifies the SSP, \cite[Theorem 3.1]{GarciaCastanoMelguizoParzanese2023}, applied
to \((-C,-K)\), yields \(0<\delta_1<\delta_2\) and \(f\in X^*\) such that
\(f(x)+\alpha\|x\|<0<f(y)+\alpha\|y\|\)
for every \(\alpha\in(\delta_1,\delta_2)\), every
\(x\in\clco(C)\setminus\{0_X\}\), and every
\(y\in\bd K\setminus\{0_X\}\).

Choose
\(0<\delta_1<\beta<\alpha_1<\alpha_2<\delta_2\)
and set
\(\varepsilon_0:= \frac{\alpha_1-\beta}{3(\|f\|+\alpha_1)}.\)
Let \(0<\varepsilon<\varepsilon_0\), \(\alpha\in(\alpha_1,\alpha_2)\), and
\(y\in\cone(\vee K+3\varepsilon B_X^\circ)\setminus\{0_X\}.\)
Then \(y=\lambda(s+3\varepsilon b)\), with
\(\lambda>0\), \(s\in\vee K\), and \(b\in B_X^\circ\). Hence
\[
\begin{aligned}
f(y)+\alpha\|y\|
&> f(y)+\alpha_1\|y\| \\
&\ge f(\lambda s)+\alpha_1\|\lambda s\|
      -3\lambda\varepsilon(\|f\|+\alpha_1) \\
&> f(\lambda s)+\beta\|\lambda s\| \\
&>0.
\end{aligned}
\]
Here we used \(3\varepsilon(\|f\|+\alpha_1)<\alpha_1-\beta\) and
\(\|\lambda s\|=\lambda\).

by construction,
\[
f(x)+\alpha\|x\|<0
\qquad
\forall x\in\clco(C)\setminus\{0_X\},\quad
\forall\alpha\in(\alpha_1,\alpha_2).
\]
By Lemma~\ref{lem:boundary_interior_reduction_inclusion},
\[
\bd(K^{-\varepsilon})\setminus\{0_X\}
\subset
\cone(\vee K+3\varepsilon B_X^\circ).
\]
Therefore
\(f(x)+\alpha\|x\|<0<f(y)+\alpha\|y\|\)
for every \(x\in\clco(C)\setminus\{0_X\}\), every
\(y\in\bd(K^{-\varepsilon})\setminus\{0_X\}\), and every
\(\alpha\in(\alpha_1,\alpha_2)\).

If \(K^{-\varepsilon}\setminus\{0_X\}\neq\varnothing\), the characterization of
the SSP, applied to \((-C,-K^{-\varepsilon})\), gives that
\((C,K^{-\varepsilon})\) verifies the SSP.
\end{proof}

\noindent For solid cones, nontriviality is automatic for small reductions.

\begin{corollary}
\label{cor:reduced_cone_SSP_solid}
Let \(C,K\subset X\) be cones such that \((C,K)\) verifies the SSP and
\(\intt K\neq\varnothing\). Then there exists \(\varepsilon_0>0\) such that,
for every \(0<\varepsilon<\varepsilon_0\),
\[
K^{-\varepsilon}\setminus\{0_X\}\neq\varnothing
\quad\text{and}\quad
(C,K^{-\varepsilon}) \text{ verifies the SSP}.
\]
\end{corollary}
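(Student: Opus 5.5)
The corollary follows by combining two earlier results: Lemma~\ref{lem:basic_properties_interior_reductions}\textup{(iv)} and Proposition~\ref{prop:reduced_cone_boundary_separation}. The plan is to produce two thresholds, one for each conclusion, and take their minimum.

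First, we invoke Proposition~\ref{prop:reduced_cone_boundary_separation}. Since \((C,K)\) verifies the SSP, it gives \(\varepsilon_1>0\) such that \((C,K^{-\varepsilon})\) verifies the SSP for every \(0<\varepsilon<\varepsilon_1\), provided \(K^{-\varepsilon}\setminus\{0_X\}\neq\varnothing\).

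Second, we need an explicit threshold below which nontriviality holds. Part (iv) of the lemma only says "for all sufficiently small \(\varepsilon\)", so we read this off its proof. Since \(K\) is solid, pick \(u\in\intt K\cap S_X\); this is possible because \(\intt K\) is a nonempty open set and \(K\) is a cone, so normalizing any nonzero interior point stays interior. Let \(\delta>0\) satisfy \(u+\delta B_X\subset K\). The argument in (iv), equivalently the one in Proposition~\ref{prop:inner_approximating_sequence_covers_interior}, shows that \(u\in K^{-\varepsilon}\setminus\{0_X\}\) for every \(0<\varepsilon<\varepsilon_2:=\min\{1,\delta/2\}\). This is uniform in \(\varepsilon\) because of the monotonicity in Lemma~\ref{lem:basic_properties_interior_reductions}\textup{(ii)}: once \(u\in K^{-\varepsilon}\), it lies in \(K^{-\varepsilon'}\) for all \(\varepsilon'\le\varepsilon\).

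Finally, set \(\varepsilon_0:=\min\{\varepsilon_1,\varepsilon_2\}\). For \(0<\varepsilon<\varepsilon_0\), both nontriviality and the hypothesis of Proposition~\ref{prop:reduced_cone_boundary_separation} hold, so \((C,K^{-\varepsilon})\) verifies the SSP.

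The only step needing care is the second: making the "sufficiently small" of (iv) into a genuine interval \((0,\varepsilon_2)\). The explicit \(\delta\) from the proof, or the monotonicity argument, handles this.
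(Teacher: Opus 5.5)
Your proposal is correct and is essentially the paper's proof. Like the paper, you take the nontriviality threshold from Lemma~\ref{lem:basic_properties_interior_reductions}\textup{(iv)} and the SSP threshold from Proposition~\ref{prop:reduced_cone_boundary_separation}, set \(\varepsilon_0\) equal to their minimum, and your extra unpacking of \textup{(iv)} (the explicit bound \(\min\{1,\delta/2\}\) and the monotonicity remark) is harmless but unnecessary, since ``for all sufficiently small \(\varepsilon\)'' already means ``for all \(\varepsilon\) in some interval \((0,\mu)\)'', which is how the paper reads it.
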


\begin{proof}
By Lemma~\ref{lem:basic_properties_interior_reductions}, there exists
\(\mu>0\) such that
\(K^{-\varepsilon}\setminus\{0_X\}\neq\varnothing\) for every
\(0<\varepsilon<\mu\). By
Proposition~\ref{prop:reduced_cone_boundary_separation}, there exists
\(\eta>0\) such that \((C,K^{-\varepsilon})\) verifies the SSP whenever
\(0<\varepsilon<\eta\) and
\(K^{-\varepsilon}\setminus\{0_X\}\neq\varnothing\). Take
\(\varepsilon_0:=\min\{\mu,\eta\}.\)
\end{proof}

\noindent The same conclusion holds under the usual nontrivial intersection condition.

\begin{corollary}
\label{cor:reduced_cone_SSP_intersection}
Let \(C,K\subset X\) be cones such that \((C,K)\) verifies the SSP and
\(C\cap K\neq\{0_X\}\). Then there exists \(\varepsilon_0>0\) such that, for
every \(0<\varepsilon<\varepsilon_0\),
\[
K^{-\varepsilon}\setminus\{0_X\}\neq\varnothing
\quad\text{and}\quad
(C,K^{-\varepsilon}) \text{ verifies the SSP}.
\]
\end{corollary}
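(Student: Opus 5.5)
The idea is to reduce the statement to Proposition~\ref{prop:reduced_cone_boundary_separation}. That proposition already gives some \(\eta>0\) such that \((C,K^{-\varepsilon})\) verifies the SSP whenever \(0<\varepsilon<\eta\) and \(K^{-\varepsilon}\setminus\{0_X\}\neq\varnothing\). So the only new work is nontriviality: I will show that the hypothesis \(C\cap K\neq\{0_X\}\) replaces solidness, which was used in Lemma~\ref{lem:basic_properties_interior_reductions}\textup{(iv)}. The strategy is to prove that a fixed unit vector of \(C\cap K\) stays uniformly away from \(\vee K\), and then rerun the argument of Lemma~\ref{lem:basic_properties_interior_reductions}\textup{(iv)} with that vector.

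\textbf{Step 1 (analytic form of the SSP).} Apply Theorem~\ref{thm:primer_tma_separacion_C_y_K} to \((-C,-K)\), exactly as in the proof of Proposition~\ref{prop:reduced_cone_boundary_separation}. This gives \(0<\delta_1<\delta_2\) and \(f\in X^*\) such that
\[
f(x)+\alpha\|x\|<0<f(y)+\alpha\|y\|
\]
for all \(\alpha\in(\delta_1,\delta_2)\), all \(x\in\clco(C)\setminus\{0_X\}\), and all \(y\in\bd K\setminus\{0_X\}\).

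\textbf{Step 2 (uniform gap from \(\vee K\)).} Pick \(x\in C\cap K\setminus\{0_X\}\) and set \(u:=x/\|x\|\in C\cap K\cap S_X\).
\begin{itemize}
\item Since \(f(u)+\alpha<0\) for every \(\alpha<\delta_2\), letting \(\alpha\uparrow\delta_2\) gives \(f(u)\le-\delta_2\).
\item For \(v\in\vee K\), we have \(f(v)+\alpha>0\) for every \(\alpha>\delta_1\), so \(f(v)\ge-\delta_1\).
\item Note \(f\neq0\), since \(f(u)<0\).
\end{itemize}
Hence \(\delta_2-\delta_1\le f(v)-f(u)\le\|f\|_*\|v-u\|\). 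Therefore
\[
\dist(u,\vee K)\ge\delta:=\frac{\delta_2-\delta_1}{\|f\|_*}>0 .
\]
If \(\vee K=\varnothing\), then \(\Sigma_\varepsilon(K)=\varnothing\) and \(u\in K^{-\varepsilon}\) trivially, so this case needs nothing further.

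\textbf{Step 3 (\(u\) survives the reduction).} Take \(0<\varepsilon<\min\{1,\delta/2\}\) and suppose \(u\in\Sigma_\varepsilon(K)\). Then \(u=t(v+b)\) with \(t>0\), \(v\in\vee K\), and \(\|b\|<\varepsilon\). As in Lemma~\ref{lem:basic_properties_interior_reductions}\textup{(iv)}, \(u=(v+b)/\|v+b\|\), so
\[
\|u-v\|\le\bigl|1-\|v+b\|\bigr|+\|b\|<2\varepsilon<\delta,
\]
which contradicts Step 2. Since \(u\in K\), it follows that \(u\in K^{-\varepsilon}\setminus\{0_X\}\).

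\textbf{Step 4 (conclusion).} Set \(\varepsilon_0:=\min\{1,\delta/2,\eta\}\). For \(0<\varepsilon<\varepsilon_0\), Step 3 gives \(K^{-\varepsilon}\setminus\{0_X\}\neq\varnothing\), and then Proposition~\ref{prop:reduced_cone_boundary_separation} gives that \((C,K^{-\varepsilon})\) verifies the SSP.

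The main obstacle is Step 2: extracting a quantitative separation of the direction \(u\) from \(\vee K\) out of the SSP. The key point is to use the whole interval \((\delta_1,\delta_2)\) of admissible \(\alpha\), not a single value, since that is what produces the positive gap \(\delta_2-\delta_1\). The remaining steps are routine.
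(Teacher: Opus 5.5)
Your proof is correct, but it takes a different route from the paper.

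\textbf{The paper's route.} The paper never proves nontriviality by hand. It invokes Theorem~3.3 of \cite{GarciaCastanoMelguizoParzanese2023}: the SSP together with \(C\cap K\neq\{0_X\}\) forces \(C\setminus\{0_X\}\subset\intt K\). Hence \(K\) is solid, and the statement reduces to Corollary~\ref{cor:reduced_cone_SSP_solid}. That is, it reduces to Lemma~\ref{lem:basic_properties_interior_reductions}\textup{(iv)} combined with Proposition~\ref{prop:reduced_cone_boundary_separation}.

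\textbf{Your route.} You instead derive from Theorem~\ref{thm:primer_tma_separacion_C_y_K} the explicit gap \(\dist(u,\vee K)\ge(\delta_2-\delta_1)/\|f\|_*\) for a unit vector \(u\in C\cap K\). You then rerun the argument of Lemma~\ref{lem:basic_properties_interior_reductions}\textup{(iv)} with that vector.

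\textbf{What each buys.} Your route avoids the external structural theorem: you only use the characterization already needed for Proposition~\ref{prop:reduced_cone_boundary_separation}, and you get an explicit \(\varepsilon_0\). In fact your gap already shows \(u\in\intt K\), since otherwise \(u\in\bd K\cap S_X=\vee K\). So you are reproving exactly the piece of Theorem~3.3 needed here. The paper's route is shorter and yields the stronger inclusion \(C\setminus\{0_X\}\subset\intt K\), which it reuses in Proposition~\ref{prop:intermediate_cones_ssp}.

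\textbf{A simplification of Step~2.} The definition of the SSP already says \(0_X\notin\cl\bigl(\co(C\cap S_X)-\co(\vee K\cup\{0_X\})\bigr)\). So there is \(r>0\) with \(\|u-v\|\ge r\) for all \(u\in C\cap S_X\) and \(v\in\vee K\). The analytic form and the interval \((\delta_1,\delta_2)\) are not needed to get a uniform gap.
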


\begin{proof}
By \cite[Theorem 3.3]{GarciaCastanoMelguizoParzanese2023}, the SSP and
\(C\cap K\neq\{0_X\}\) imply
\(C\setminus\{0_X\}\subset\intt K.\)
In particular, \(\intt K\neq\varnothing\). The result follows from
Corollary~\ref{cor:reduced_cone_SSP_solid}.
\end{proof}

\noindent We shall also use the following monotonicity property of the SSP.

\begin{corollary}[\cite{HenigDensity2024}, Corollary 3.14]
\label{coro:inclusiones_y_SSP}
Let \(X\) be a normed space and let \(C,K_1,K_2\subset X\) be cones such that
\(C\cap K_1\neq\{0_X\}\). If \((C,K_1)\) verifies the SSP and \(K_1\subset K_2\),
then \((C,K_2)\) verifies the SSP.
\end{corollary}

\noindent Stability and monotonicity yield SSP for intermediate closed cones.

\begin{proposition}
\label{prop:intermediate_cones_ssp}
Let \(C,K\subset X\) be cones such that \((C,K)\) verifies the SSP and
\(C\cap K\neq\{0_X\}\). Then there exists \(\varepsilon>0\) such that, for
every closed cone \(L\) satisfying
\(K^{-\varepsilon}\subset L\subset K,\)
the pair \((C,L)\) verifies the SSP.
\end{proposition}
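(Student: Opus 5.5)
The plan is to combine the stability result for interior reductions with the monotonicity of the SSP under enlargement of the second cone. By Corollary~\ref{cor:reduced_cone_SSP_intersection}, the hypotheses give \(\varepsilon_0>0\) such that, for every \(0<\varepsilon<\varepsilon_0\), the cone \(K^{-\varepsilon}\) is nontrivial and \((C,K^{-\varepsilon})\) verifies the SSP. I fix one such \(\varepsilon\), say \(\varepsilon:=\varepsilon_0/2\). Now let \(L\) be any closed cone with \(K^{-\varepsilon}\subset L\subset K\). I want to apply Corollary~\ref{coro:inclusiones_y_SSP} with \(K_1:=K^{-\varepsilon}\) and \(K_2:=L\). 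The inclusion \(K_1\subset K_2\) holds by assumption, and \((C,K_1)\) verifies the SSP by the choice of \(\varepsilon\). The remaining hypothesis is \(C\cap K^{-\varepsilon}\neq\{0_X\}\).

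This intersection condition is the only real point to check. By \cite[Theorem 3.3]{GarciaCastanoMelguizoParzanese2023}, as used in the proof of Corollary~\ref{cor:reduced_cone_SSP_intersection}, the SSP together with \(C\cap K\neq\{0_X\}\) gives \(C\setminus\{0_X\}\subset\intt K\). I pick \(c\in C\setminus\{0_X\}\) and set \(u:=c/\|c\|\), so that \(u\in\intt K\cap S_X\). As in the proofs of Lemma~\ref{lem:basic_properties_interior_reductions}(iv) and Proposition~\ref{prop:inner_approximating_sequence_covers_interior}, there is \(\delta>0\) with \(\dist(u,\vee K)\ge\delta\), and then \(u\in K^{-\varepsilon'}\) for every \(0<\varepsilon'<\delta/2\). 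So I shrink \(\varepsilon\) further and take \(\varepsilon:=\tfrac12\min\{\varepsilon_0,\delta/2\}\), where \(\delta\) depends only on the fixed \(c\) and not on \(L\). With this choice, \(c=\|c\|u\in C\cap K^{-\varepsilon}\setminus\{0_X\}\).

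Corollary~\ref{coro:inclusiones_y_SSP} then yields that \((C,L)\) verifies the SSP for every admissible \(L\). Closedness of \(L\) is not needed in this argument; it is kept only to match the intended use for closed inner approximating sequences. The main obstacle is precisely this intersection requirement in the monotonicity corollary. It is handled by choosing \(\varepsilon\) small relative to a single fixed direction of \(C\) lying in \(\intt K\), which keeps \(\varepsilon\) uniform over all intermediate cones \(L\).
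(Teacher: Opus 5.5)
Your proof is correct and follows essentially the same route as the paper: use the cited Theorem 3.3 to place a fixed nonzero point of \(C\) in \(\intt K\), shrink \(\varepsilon\) so that this point lies in \(K^{-\varepsilon}\) and \((C,K^{-\varepsilon})\) verifies the SSP, and then apply the monotonicity result, Corollary~\ref{coro:inclusiones_y_SSP}, with \(K_1=K^{-\varepsilon}\) and \(K_2=L\). Your remark that closedness of \(L\) is never used is also accurate for the paper's argument.
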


\begin{proof}
By \cite[Theorem 3.3]{GarciaCastanoMelguizoParzanese2023},
\(C\setminus\{0_X\}\subset\intt K.\)
Choose \(z\in C\cap K\), \(z\neq0_X\). Then \(z\in\intt K\).

Arguing as in Lemma~\ref{lem:basic_properties_interior_reductions}, there
exists \(\eta>0\) such that
\(z\in K^{-\varepsilon'} \quad \forall\,0<\varepsilon'<\eta.\)
By Proposition~\ref{prop:reduced_cone_boundary_separation}, decreasing
\(\eta\) if necessary, choose \(0<\varepsilon<\eta\) such that
\((C,K^{-\varepsilon})\) verifies the SSP. Then
\(C\cap K^{-\varepsilon}\neq\{0_X\}.\)
Let \(L\) be a closed cone such that
\(K^{-\varepsilon}\subset L\subset K\). Since
\((C,K^{-\varepsilon})\) verifies the SSP,
\(C\cap K^{-\varepsilon}\neq\{0_X\}\), and
\(K^{-\varepsilon}\subset L\), Corollary~\ref{coro:inclusiones_y_SSP} gives
that \((C,L)\) verifies the SSP.
\end{proof}

\noindent We now pass to closed inner approximating sequences.

\begin{theorem}
\label{teo:ssp_inner_approximating_sequence}
Let \(C,K\subset X\) be cones such that \((C,K)\) verifies the SSP, and let
\((K_n)\) be a closed inner approximating sequence for \(K\). Then there
exists \(n_0\in\N\) such that \((C,K_n)\) verifies the SSP for every
\(n\ge n_0\).
\end{theorem}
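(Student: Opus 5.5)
The plan is to argue directly through the analytic characterization in Theorem~\ref{thm:primer_tma_separacion_C_y_K}, reusing the estimate from the proof of Proposition~\ref{prop:reduced_cone_boundary_separation}. I will not go through Proposition~\ref{prop:intermediate_cones_ssp}: that route needs \(C\cap K\neq\{0_X\}\), which the statement does not assume.

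First, applying Theorem~\ref{thm:primer_tma_separacion_C_y_K} to \((-C,-K)\), I fix \(f\in X^*\) and \(0<\delta_1<\delta_2\) as in the proof of Proposition~\ref{prop:reduced_cone_boundary_separation}. I then choose \(\beta,\alpha_1,\alpha_2\) and \(\varepsilon_0\) exactly as there. The computation in that proof shows, for \(0<\varepsilon<\varepsilon_0\) and \(\alpha\in(\alpha_1,\alpha_2)\), that \(f(y)+\alpha\|y\|>0\) for every \(y\in\cone(\vee K+3\varepsilon B_X^\circ)\setminus\{0_X\}\). It also gives \(f(x)+\alpha\|x\|<0\) on \(\clco(C)\setminus\{0_X\}\), and \((f,\alpha)\in C^{a\#}_+\) is inherited because \(\alpha\) stays in \((\delta_1,\delta_2)\). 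Now fix one such \(\varepsilon\) (say \(\varepsilon<\min\{\varepsilon_0,1\}\)), and let \(n_0:=n_\varepsilon\) from Definition~\ref{def:inner_approximating_sequence}, so that \(K^{-\varepsilon}\subset K_n\subset K\) for \(n\ge n_0\).

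The key step is a boundary localization for intermediate closed cones. I claim that if \(L\) is closed and \(K^{-\varepsilon}\subset L\subset K\), then
\[
\bd L\setminus\{0_X\}\subset\cone\bigl(\vee K+3\varepsilon B_X^\circ\bigr).
\]
Let \(y\in\bd L\), \(y\neq0_X\). Since \(L\) is closed, \(y\in L\subset K\), and every neighbourhood of \(y\) contains points outside \(L\), hence outside \(K^{-\varepsilon}=K\setminus\Sigma_\varepsilon(K)\). Such points lie either outside \(K\) or in \(\Sigma_\varepsilon(K)\). If points outside \(K\) accumulate at \(y\), then \(y\in K\cap\cl(K^c)\subset\bd K\), so \(y/\|y\|\in\vee K\). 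Otherwise \(y\in\cl\Sigma_\varepsilon(K)\), and the argument of Lemma~\ref{lem:boundary_interior_reduction_inclusion} gives \(\dist(y/\|y\|,\vee K)\le2\varepsilon\). In both cases \(y/\|y\|\in\vee K+3\varepsilon B_X^\circ\).

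This localization is the main point needing care. It essentially repeats Lemma~\ref{lem:boundary_interior_reduction_inclusion}, but with \(K^{-\varepsilon}\) replaced by an arbitrary closed intermediate cone, and closedness of \(K_n\) is exactly what places boundary points inside \(K\). I would also double-check the normalization step from Lemma~\ref{lem:boundary_interior_reduction_inclusion}: a direction that is a limit of normalized vectors \((v+b)/\|v+b\|\) lies within \(2\varepsilon\) of \(\vee K\).

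Combining the two steps, for \(n\ge n_0\) and \(\alpha\in(\alpha_1,\alpha_2)\) we get
\[
f(x)+\alpha\|x\|<0<f(y)+\alpha\|y\|
\]
for all \(x\in\clco(C)\setminus\{0_X\}\) and \(y\in\bd K_n\setminus\{0_X\}\). Since \(K_n\) is a (nontrivial, by convention) cone, Theorem~\ref{thm:primer_tma_separacion_C_y_K} applied to \((-C,-K_n)\) yields that \((C,K_n)\) verifies the SSP.
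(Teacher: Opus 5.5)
Your proof is correct, but it takes a different route from the paper.

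\textbf{The paper's route.} The paper splits into two cases.
\begin{enumerate}
\item If \(C\cap K\neq\{0_X\}\), it uses Proposition~\ref{prop:intermediate_cones_ssp}. That proposition relies on \cite[Theorem 3.3]{GarciaCastanoMelguizoParzanese2023} to place a point of \(C\) in \(\intt K\), hence in \(K^{-\varepsilon}\). It then applies the monotonicity result, Corollary~\ref{coro:inclusiones_y_SSP}.
\item If \(C\cap K=\{0_X\}\), it passes to the complementary cones \((X\setminus K)\cup\{0_X\}\subset(X\setminus K_n)\cup\{0_X\}\). It uses that the SSP is unchanged under this complementation and applies monotonicity again.
\end{enumerate}

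\textbf{Your route.} You extend Lemma~\ref{lem:boundary_interior_reduction_inclusion} from \(K^{-\varepsilon}\) to every cone \(L\) with \(K^{-\varepsilon}\subset L\subset K\). As a result, the estimate in the proof of Proposition~\ref{prop:reduced_cone_boundary_separation} applies verbatim to \(\bd L\setminus\{0_X\}\). This gives several advantages:
\begin{enumerate}
\item It is one uniform argument, with no case distinction.
\item It needs no external interiority or monotonicity result; only Theorem~\ref{thm:primer_tma_separacion_C_y_K} is used.
\item It proves more. A single pair \((f,\alpha)\), \(\alpha\in(\alpha_1,\alpha_2)\), separates \(C\) from all \(K_n\) with \(n\ge n_0\) simultaneously. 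Also, Proposition~\ref{prop:intermediate_cones_ssp} holds without the hypothesis \(C\cap K\neq\{0_X\}\).
\end{enumerate}
The paper's route, in turn, is shorter given the cited results. In the disjoint case it gives the SSP for every \(n\), using only \(K_n\subset K\).

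\textbf{A small correction to your commentary.} Closedness is not what places \(y\) in \(\bd K\). For any \(y\in\bd L\) one has \(y\in\cl L\subset\cl K\). So \(y\in\cl(X\setminus K)\) already forces \(y\in\bd K\). Hence your localization, and your proof, also work for inner approximating sequences that are not closed.
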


\begin{proof}
Assume first that \(C\cap K\neq\{0_X\}\). By
Proposition~\ref{prop:intermediate_cones_ssp}, there exists
\(\varepsilon>0\) such that \((C,L)\) verifies the SSP for every closed cone
\(L\) satisfying
\(K^{-\varepsilon}\subset L\subset K.\)
Since \((K_n)\) is an inner approximating sequence for \(K\), there exists
\(n_0\in\N\) such that
\(K^{-\varepsilon}\subset K_n \qquad \forall n\ge n_0.\)
For \(n\ge n_0\), the cone \(K_n\) is closed and satisfies
\(K^{-\varepsilon}\subset K_n\subset K.\)
Hence \((C,K_n)\) verifies the SSP.

Assume now that \(C\cap K=\{0_X\}\). For \(n\in\N\), set
\(K':=(X\setminus K)\cup\{0_X\}, \quad K_n':=(X\setminus K_n)\cup\{0_X\}.\)
Since \(K_n\subset K\), one has \(K'\subset K_n'\). Also, the SSP for
\((C,K)\) is equivalent to the SSP for \((C,K')\). Choose
\(c\in C\setminus\{0_X\}\). Since \(C\cap K=\{0_X\}\), one has
\(c\notin K\), and hence \(c\in K'\). Thus
\(C\cap K'\neq\{0_X\}.\)
By Corollary~\ref{coro:inclusiones_y_SSP}, applied to \(C\), \(K'\), and
\(K_n'\), the pair \((C,K_n')\) verifies the SSP for every \(n\in\N\). By the same
equivalence, \((C,K_n)\) verifies the SSP for every \(n\in\N\). The conclusion
holds with any \(n_0\in\N\).
\end{proof}

\noindent We shall use the preceding construction only through
Theorem~\ref{teo:ssp_inner_approximating_sequence}.

\section{Regular Co-radiant Sets and Characterization via Inner
Approximations}
\label{sec:regular_coradiant_sets}
We introduce a regularity condition for co-radiant sets. It replaces the
existence of a global norm-base by the existence of norm-bases after removing
directions close to the boundary of the generated cone.

Let \(\mathcal K\subset X\) be a co-radiant set. We next show how the reductions of the cone generated by \(\mathcal K\), introduced in Definition~\ref{def:interior_reduction_cone}, naturally induce corresponding reductions of the original co-radiant set \(\mathcal K\). These induced reductions will subsequently serve as the basis for introducing the notion of regular co-radiant sets.

\begin{definition}
\label{def:interior_reduction_coradiant_set}
Let \(\cK\subset X\) be co-radiant and let \(\varepsilon>0\). The
\(\varepsilon\)-interior reduction of \(\cK\) is
\(\cK^{-\varepsilon} := \cK\cap(\cone(\cK))^{-\varepsilon}.\)

\begin{figure}[H]
    \centering
    \includegraphics[scale=0.12]{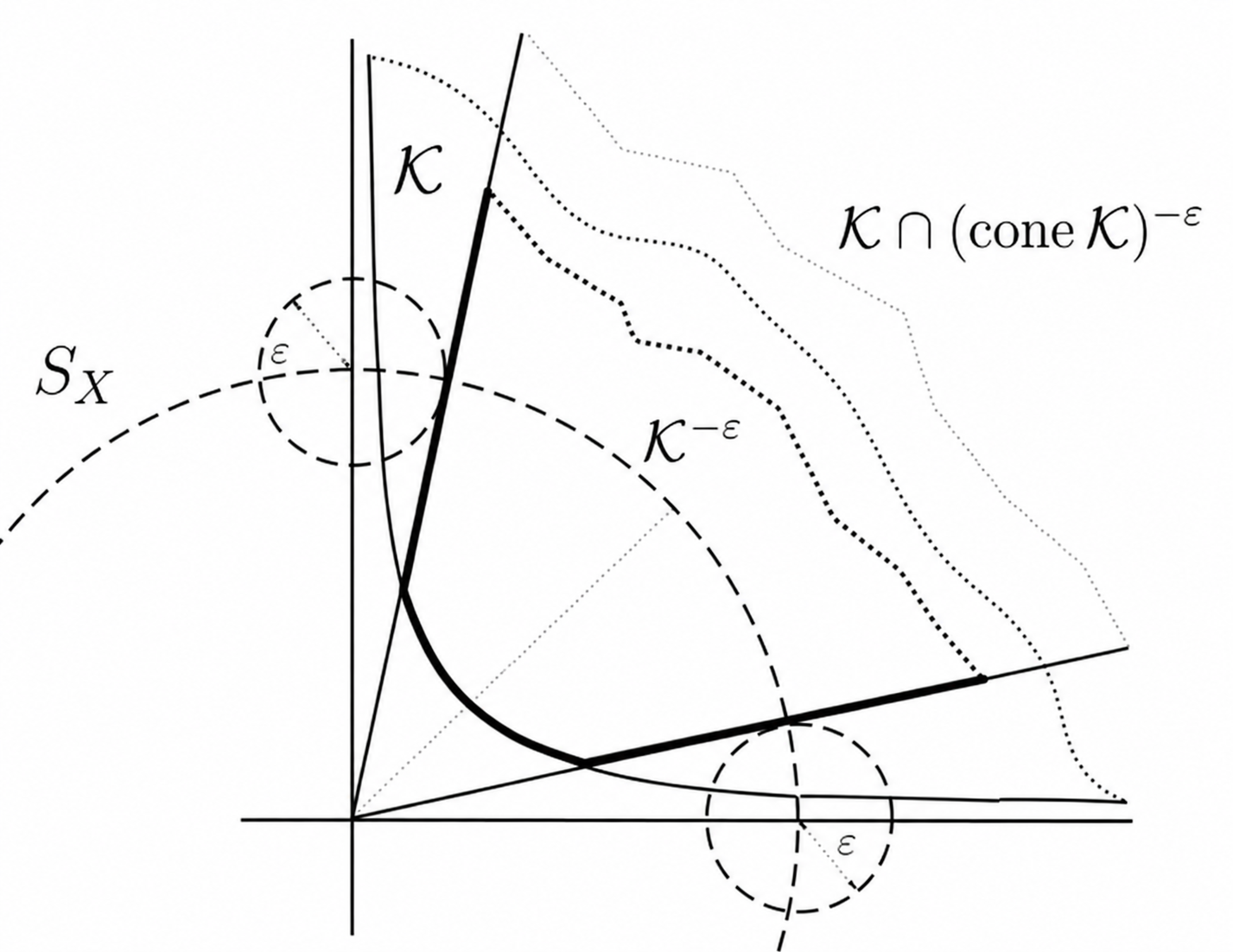}
    \caption{\(\varepsilon\)-interior reduction of \(\cK\).}
    \label{fig:coradiant_K_epsilon}
\end{figure}
\end{definition}

Since \((\cone(\cK))^{-\varepsilon}\) is a cone, the set
\(\cK^{-\varepsilon}\) is again co-radiant.

\begin{definition}
\label{def:regular_coradiant_set}
Let \(\cK\subset X\) be co-radiant. We say that \(\cK\) is regular if, for
every \(\varepsilon>0\) such that \(\cK^{-\varepsilon}\neq\varnothing\), the
set \(\cK^{-\varepsilon}\) admits a norm-base.
\end{definition}

\noindent Thus regularity does not require \(\cK\) itself to admit a norm-base.
We give a sequential form of regularity.

\begin{proposition}
\label{prop:regularity_inner_approx_characterization}
Let \(\cK\) be co-radiant and suppose that \(K:=\cone(\cK)\) is solid. Then
the following assertions are equivalent:
\begin{enumerate}[label=\textup{(\roman*)}]
\item \(\cK\) is regular.
\item There exist an inner approximating sequence \((K_n)\) for \(K\) and
\(n_0\in\N\) such that \(\cK\cap K_n\) admits a norm-base for every
\(n\ge n_0\).
\end{enumerate}
\end{proposition}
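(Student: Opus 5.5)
The plan is to prove the two implications separately, both times working directly with Definition~\ref{def:norm-base-coradiant-set} and the tail characterization of Proposition~\ref{prop:norm_base_tail_characterization}. The key observation is that the interior reductions are cones. So whenever an element lies in one of them, its rescaling to any sphere \(tS_X\) stays there. All norm-base questions therefore reduce to membership in \(\cK\) itself.

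\emph{(i)\(\Rightarrow\)(ii).} I take the canonical sequence \(K_n:=K^{-1/n}\). By Proposition~\ref{prop:canonical_inner_approximating_sequence} it is a closed inner approximating sequence for \(K\), with trivial initial terms discarded as in the subsequent remark. By Definition~\ref{def:interior_reduction_coradiant_set}, \(\cK\cap K_n=\cK^{-1/n}\). It therefore suffices to show that \(\cK^{-1/n}\neq\varnothing\) for all large \(n\); regularity then provides a norm-base for each such \(n\).

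For nonemptiness I use Lemma~\ref{lem:basic_properties_interior_reductions}\textup{(iv)} together with the proof of Proposition~\ref{prop:inner_approximating_sequence_covers_interior}. Fix \(u\in\intt K\cap S_X\). Then \(u\in K^{-\varepsilon}\) for all small \(\varepsilon\). Since \(u\in K=\cone(\cK)\), there exist \(\lambda>0\) and \(k\in\cK\) with \(u=\lambda k\). Hence \(k\) lies on the ray \(\R_{++}u\subset K^{-\varepsilon}\), and so \(k\in\cK\cap K^{-\varepsilon}=\cK^{-\varepsilon}\). This gives the required \(n_0\).

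\emph{(ii)\(\Rightarrow\)(i).} Let \(\varepsilon>0\) with \(\cK^{-\varepsilon}\neq\varnothing\). By Definition~\ref{def:inner_approximating_sequence}\textup{(ii)}, I choose \(n\ge\max\{n_0,n_\varepsilon\}\), so that \(K^{-\varepsilon}\subset K_n\) and \(\cK\cap K_n\) admits a norm-base \((\cK\cap K_n)\cap tS_X\) for some \(t>0\). I claim that \(\cK^{-\varepsilon}\cap tS_X\) is a norm-base of \(\cK^{-\varepsilon}\).

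Let \(x\in\cK^{-\varepsilon}\); then \(x\neq0_X\). Since \(x\in\cK\cap K^{-\varepsilon}\subset\cK\cap K_n\), the norm-base property of \(\cK\cap K_n\) yields \(y\in\cK\cap K_n\cap tS_X\) and \(\lambda>0\) with \(x=\lambda y\). Necessarily \(y=tx/\|x\|\). Because \(K^{-\varepsilon}\) is a cone (Lemma~\ref{lem:basic_properties_interior_reductions}\textup{(iii)}), \(y\in K^{-\varepsilon}\) as well. Hence \(y\in\cK\cap(\cone\cK)^{-\varepsilon}\cap tS_X=\cK^{-\varepsilon}\cap tS_X\). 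The radius \(t\) does not depend on \(x\), so this is a genuine norm-base, and \(\cK\) is regular.

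The only delicate point is the nonemptiness step in (i)\(\Rightarrow\)(ii). Regularity is vacuous when \(\cK^{-\varepsilon}=\varnothing\), so this step is exactly where solidness of \(K\) is needed. The argument above handles it by producing an interior direction of \(K\) that survives all small reductions and is realized by an element of \(\cK\). The remaining steps are routine checks that the reductions are cones and that the relevant inclusions hold.
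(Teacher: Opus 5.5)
Your proof is correct and follows the paper's argument: the canonical sequence $K^{-1/n}$ together with regularity for (i)$\Rightarrow$(ii), and the same rescaling-into-the-cone-$K^{-\varepsilon}$ argument for (ii)$\Rightarrow$(i). The only minor difference is in the nonemptiness step. You rescale an interior direction $u=\lambda k$ to obtain an element of $\cK^{-\varepsilon}$ directly. The paper instead shows by contradiction that $\cK\cap\intt K\neq\varnothing$ and then invokes Proposition~\ref{prop:inner_approximating_sequence_covers_interior}.
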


\begin{proof}
\textup{(i)}\(\Rightarrow\)\textup{(ii)}. Assume that \(\cK\) is regular.
Set \(K_n:=K^{-1/n}\). By
Proposition~\ref{prop:canonical_inner_approximating_sequence}, \((K_n)\) is
a closed inner approximating sequence for \(K\).

We first prove that \(\cK\cap\intt K\neq\varnothing\). Otherwise
\(\cK\subset K\setminus\intt K\subset\bd K\). Since \(K\) is a cone, every
positive multiple of a nonzero point of \(\bd K\) again belongs to \(\bd K\).
Hence \(\cone(\cK)\subset\bd K\cup\{0_X\}\), contradicting the solidity of
\(K=\cone(\cK)\).

Choose \(x\in\cK\cap\intt K\). By
Proposition~\ref{prop:inner_approximating_sequence_covers_interior}, there
exists \(n_0\in\N\) such that \(x\in K_n\) for every \(n\ge n_0\). Hence
\(\cK\cap K_n\neq\varnothing\) for every \(n\ge n_0\). Moreover,
\(\cK\cap K_n = \cK\cap K^{-1/n} = \cK^{-1/n} \quad(n\ge n_0).\)
By regularity, \(\cK^{-1/n}\) admits a norm-base. Thus \(\cK\cap K_n\)
admits a norm-base for every \(n\ge n_0\).

\textup{(ii)}\(\Rightarrow\)\textup{(i)}. Assume \textup{(ii)}. Let
\(\varepsilon>0\) be such that \(\cK^{-\varepsilon}\neq\varnothing\). Since
\((K_n)\) is an inner approximating sequence for \(K\), there exists
\(n_\varepsilon\in\N\) such that \(K^{-\varepsilon}\subset K_n\) for every
\(n\ge n_\varepsilon\). Choose \(n\ge\max\{n_0,n_\varepsilon\}\). Then
\(\cK^{-\varepsilon} = \cK\cap K^{-\varepsilon} \subset \cK\cap K_n.\)
Let \(t_n>0\) be such that \((\cK\cap K_n)\cap t_nS_X\) is a norm-base of
\(\cK\cap K_n\). We prove that \(\cK^{-\varepsilon}\cap t_nS_X\) is a
norm-base of \(\cK^{-\varepsilon}\).

Let \(y\in\cK^{-\varepsilon}\). Since \(y\in\cK\cap K_n\), there are
\(\lambda>0\) and \(u\in(\cK\cap K_n)\cap t_nS_X\) such that \(y=\lambda u\).
Since \(y\in K^{-\varepsilon}\) and \(K^{-\varepsilon}\) is a cone, one has
\(u=\lambda^{-1}y\in K^{-\varepsilon}\). Also \(u\in\cK\). Hence
\(u\in\cK^{-\varepsilon}\cap t_nS_X\). Therefore
\(\cK^{-\varepsilon}\cap t_nS_X\) is a norm-base of
\(\cK^{-\varepsilon}\). Thus \(\cK\) is regular.
\end{proof}

Regularity is therefore an eventual norm-base property along inner
approximations of \(\cone(\cK)\).

The next elementary fact will be used to identify the cone generated by an
interior part of a co-radiant set.

\begin{proposition}
\label{prop:cone_of_intersection_with_subcone}
Let \(\cK\subset X\) be co-radiant, and let \(L\subset\cone(\cK)\) be a
nontrivial cone. Then
\(\cone(\cK\cap L)=L.\)
\end{proposition}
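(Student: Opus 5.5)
Both inclusions follow from two facts: \(L\) is closed under positive scaling, and \(\cK\) is closed under scaling by factors \(\lambda\ge1\). Note first that \(0_X\) lies in both sides: \(L\) is a cone, and \(\cone(A)\) contains \(0_X\) whenever \(A\ne\varnothing\). For the left-hand side this needs \(\cK\cap L\neq\varnothing\), which the second step provides, since \(L\) is nontrivial.

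The inclusion \(\cone(\cK\cap L)\subset L\) is immediate. An element has the form \(\lambda z\) with \(\lambda\ge0\) and \(z\in\cK\cap L\subset L\). Since \(L\) is a cone, \(\lambda z\in L\).

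For the reverse inclusion, take \(y\in L\) with \(y\neq0_X\); the case \(y=0_X\) was handled above. Since \(L\subset\cone(\cK)\), we can write \(y=\mu k\) with \(\mu>0\) and \(k\in\cK\); here \(\mu\neq0\) because \(y\ne0_X\). The aim is to find a point of \(\cK\cap L\) on the ray through \(y\). Choose \(\lambda\ge1\) with \(\lambda\ge1/\mu\), and set \(z:=\lambda k\).
\begin{itemize}
\item By co-radiance, \(z\in\cK\).
\item Also \(z=(\lambda/\mu)\,y\) with \(\lambda/\mu>0\), so \(z\in L\) because \(L\) is a cone.
\end{itemize}
Hence \(z\in\cK\cap L\), and \(y=(\mu/\lambda)z\in\cone(\cK\cap L)\).

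The only delicate point is the ray-matching in the reverse inclusion. The positive multiple of \(y\) that lies in \(\cK\) must be pushed far enough out (by \(\lambda\ge1\)) for co-radiance to apply, while staying in \(L\). This works precisely because \(L\) is invariant under every positive scaling, not just scalings by factors \(\ge1\). Nontriviality of \(L\) is used only to guarantee that \(\cK\cap L\) is nonempty, so that \(\cone(\cK\cap L)\) is a genuine cone containing \(0_X\).
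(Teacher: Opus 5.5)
Your proof is correct and is essentially the paper's ray-matching argument; the paper splits into the cases \(\mu\ge1\) (then \(y\in\cK\cap L\) directly by co-radiance) and \(0<\mu<1\) (then \(k=\mu^{-1}y\in L\)), whereas you make one uniform choice of \(\lambda\). One correction to your commentary: neither the constraint \(\lambda\ge1/\mu\) nor co-radiance is needed, because with \(\lambda=1\) the point \(k=\mu^{-1}y\) already lies in \(\cK\cap L\) (as \(L\) is invariant under all positive scalings), so nothing has to be ``pushed outward''.
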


\begin{proof}
Since \(\cK\cap L\subset L\) and \(L\) is a cone, one has
\(\cone(\cK\cap L)\subset L.\)
We prove the reverse inclusion. Let \(x\in L\setminus\{0_X\}\). Since
\(L\subset\cone(\cK)\), there exist \(\lambda>0\) and \(y\in\cK\) such that
\(x=\lambda y.\)
If \(\lambda\ge1\), then \(x=\lambda y\in\cK\), because \(\cK\) is
co-radiant. Hence \(x\in\cK\cap L\).

If \(0<\lambda<1\), then
\(y=\lambda^{-1}x.\)
Since \(L\) is a cone, \(y\in L\). Hence \(y\in\cK\cap L\), and therefore
\(x=\lambda y\in\cone(\cK\cap L)\).

Thus \(L\setminus\{0_X\}\subset\cone(\cK\cap L)\). Since \(L\) is nontrivial,
\(\cK\cap L\neq\varnothing\), and hence \(0_X\in\cone(\cK\cap L)\). Therefore
\(L\subset\cone(\cK\cap L)\).
\end{proof}

\section{Radial Profile Functions for Co-radiant Sets}
\label{sec:radial_profiles_coradiant_sets}

The radial profile function is naturally associated with every co-radiant set. As will be shown in Subsection~\ref{subsec:regularity_radial_profile}, this function provides a complete characterization of the existence of norm bases for co-radiant sets. Moreover, together with the inner approximating sequences introduced in the previous section, it plays a fundamental role in the characterization of regularity under a mild solidness assumption on the associated cone. In Subsection~\ref{subsec:radial_scalarization_approximate_efficiency}, we further show that the radial profile function gives rise to a characteristic-type scalar function associated with a co-radiant set, leading to natural scalarization results for approximate efficiency problems.

\begin{definition}
\label{def:radial-profile}
Let \(\cK\) be co-radiant. Its radial profile is defined by
\[
\rho_{\cK}:S_X\to[0,+\infty],
\qquad
\rho_{\cK}(v):=\inf\{\lambda>0:\lambda v\in\cK\},
\]
with the convention $\inf \varnothing=+\infty$. Set
\[
\dom(\rho_{\cK}):=\{v\in S_X:\rho_{\cK}(v)<+\infty\},
\quad
d_{\cK}:=\inf_{v\in\dom(\rho_{\cK})}\rho_{\cK}(v),
\quad
\mathcal I_{\cK}:=\sup_{v\in\dom(\rho_{\cK})}\rho_{\cK}(v).
\]
\end{definition}
The number \(\rho_{\cK}(v)\) is the radial threshold of \(\cK\) along the ray
\(\Rp v\). The threshold need not be attained.

\begin{lemma}
\label{lem:basic_radial_profile}
Let \(\cK\subset X\) be co-radiant. Then:
\begin{enumerate}[label=\textup{(\roman*)}]
\item \(\dom(\rho_{\cK})=\cone(\cK)\cap S_X\).
\item \(d_{\cK}=\inf\{\|k\|:k\in\cK\}= \dist(0_X,\cK)\).
\item
\(
\mathcal I_{\cK}
=
\inf\{t>0:\cK\cap tS_X\text{ is a norm-base of }\cK\}.
\)
\end{enumerate}
\end{lemma}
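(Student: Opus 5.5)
We prove the three items separately, using only the definitions of \(\rho_{\cK}\), the co-radiance of \(\cK\), and Definition~\ref{def:norm-base-coradiant-set}. Throughout we use the following observation: for \(v\in S_X\), the set \(\Lambda(v):=\{\lambda>0:\lambda v\in\cK\}\) is an up-set. That is, if \(\lambda\in\Lambda(v)\) and \(\mu\ge\lambda\), then \(\mu v=(\mu/\lambda)\lambda v\in\cK\). Hence \(\Lambda(v)\) is either empty or an interval of the form \((\rho_{\cK}(v),+\infty)\) or \([\rho_{\cK}(v),+\infty)\).

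For \textup{(i)}, note that \(v\in\dom(\rho_{\cK})\) if and only if \(\Lambda(v)\neq\varnothing\), which holds if and only if \(\lambda v\in\cK\) for some \(\lambda>0\). Since \(0_X\notin\cK\), this is equivalent to \(v\in\cone(\cK)\cap S_X\). Indeed, if \(v=\mu k\) with \(\mu\ge0\) and \(k\in\cK\), then \(\mu>0\) because \(\|v\|=1\), and so \(\mu^{-1}v\in\cK\).

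For \textup{(ii)}, every \(k\in\cK\) can be written as \(k=\|k\|v\) with \(v=k/\|k\|\in\dom(\rho_{\cK})\). Then \(\rho_{\cK}(v)\le\|k\|\), which gives \(d_{\cK}\le\inf\|k\|\). Conversely, take \(v\in\dom\) and \(\lambda\in\Lambda(v)\). Then \(\lambda v\in\cK\) and \(\|\lambda v\|=\lambda\), so \(\inf\|k\|\le\lambda\). Taking the infimum over \(\lambda\) and then over \(v\) gives the reverse inequality. The identity \(\inf\|k\|=\dist(0_X,\cK)\) is the definition of distance.

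Item \textup{(iii)} is the main step, and it needs more care. First we characterize the norm-bases. \(\cK\cap tS_X\) is a norm-base if and only if \(tv\in\cK\) for every \(v\in\dom(\rho_{\cK})\), which by the up-set property means \(t\in\Lambda(v)\) for all \(v\in\dom\). To see this, suppose \(\cK\cap tS_X\) is a norm-base and \(v\in\dom\). Take \(\lambda v\in\cK\) and write \(\lambda v=\mu y\) with \(y\in\cK\cap tS_X\) and \(\mu>0\). Comparing norms and directions gives \(y=tv\), so \(tv\in\cK\). Conversely, if \(tv\in\cK\) for all \(v\in\dom\), then each \(k\in\cK\) equals \((\|k\|/t)\,(t k/\|k\|)\) with \(t k/\|k\|\in\cK\cap tS_X\).

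Let \(T\) denote the set of such \(t\). We must show \(\inf T=\mathcal I_{\cK}\), with \(\inf\varnothing=+\infty\). If \(t\in T\), then \(\rho_{\cK}(v)\le t\) for all \(v\in\dom\), so \(\mathcal I_{\cK}\le\inf T\). For the reverse inequality we may assume \(\mathcal I_{\cK}<+\infty\). Every \(t>\mathcal I_{\cK}\) satisfies \(t>\rho_{\cK}(v)\) for all \(v\in\dom\), hence \(t\in\Lambda(v)\) for all \(v\), hence \(t\in T\). Therefore \(\inf T\le\mathcal I_{\cK}\).

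The only delicate point is that the threshold need not be attained, so \(\mathcal I_{\cK}\) itself may fail to belong to \(T\). Working with strict inequalities \(t>\mathcal I_{\cK}\) avoids this issue, since the statement concerns an infimum and not a minimum. We also need \(T\subset(0,+\infty)\), which holds because \(\mathcal I_{\cK}\ge d_{\cK}\ge0\) and we only use \(t>0\).
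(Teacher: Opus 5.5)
Your proof is correct and follows essentially the same route as the paper's. Item (ii) uses the same two-sided infimum argument. Item (iii) uses the same two steps: $\mathcal I_{\cK}\le\inf T$ because $tv\in\cK$ for every $t\in T$ and $v\in\dom(\rho_{\cK})$, and the reverse inequality by taking $t>\mathcal I_{\cK}$ and using co-radiance. One small improvement over the paper: you justify explicitly (by comparing norms and directions to get $y=tv$) that a norm-base $\cK\cap tS_X$ forces $tv\in\cK$ for all $v\in\dom(\rho_{\cK})$, which the paper asserts without proof.
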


\begin{proof}
\textup{(i)} follows from the definitions.

\textup{(ii)} Let \(k\in\cK\). Since \(0_X\notin\cK\), one has \(k\neq0_X\).
Set \(v:=k/\|k\|\). Then \(v\in\dom(\rho_{\cK})\) and
\(
\rho_{\cK}(v)\le\|k\|.
\)
Hence
\(d_{\cK}\le\inf\{\|k\|:k\in\cK\}.\)
Conversely, if \(v\in\dom(\rho_{\cK})\) and \(\lambda v\in\cK\), then
\(\inf\{\|k\|:k\in\cK\}\le\lambda.\)
Taking infima over such \(\lambda\) and then over \(v\in\dom(\rho_{\cK})\)
gives the reverse inequality. The identity with \(d_\cK(0_X)\) is
immediate.

\textup{(iii)} Set
\(T:=\{t>0:\cK\cap tS_X\text{ is a norm-base of }\cK\}.\)
If \(t\in T\) and \(v\in\dom(\rho_{\cK})\), then \(tv\in\cK\). Hence
\(\rho_{\cK}(v)\le t\), and therefore
\(\mathcal I_{\cK}\le\inf T.\)
Conversely, assume first that \(\mathcal I_{\cK}<+\infty\), and let
\(t>\mathcal I_{\cK}\). For every \(v\in\dom(\rho_{\cK})\), there exists
\(\lambda<t\) such that \(\lambda v\in\cK\). By co-radiance, \(tv\in\cK\).
Thus every \(k\in\cK\), with \(v=k/\|k\|\), is a positive multiple of
\(tv\in\cK\cap tS_X\). Hence \(t\in T\), and so
\(\inf T\le\mathcal I_{\cK}.\)
If \(\mathcal I_{\cK}=+\infty\), the inequality
\(\mathcal I_{\cK}\le\inf T\) forces \(T=\varnothing\). Hence the equality
also holds in this case.
\end{proof}

\begin{remark}
\label{rem:norm_base_bounded_radial_profile}
Let \(\cK\subset X\) be co-radiant. Then \(\cK\) admits a norm-base if and only if
\(\rho_{\cK}\) is bounded above on \(\dom(\rho_{\cK})\). Equivalently,
\(\mathcal I_{\cK}<+\infty.\)
\end{remark}

\noindent We first compute the profile of the translated quadrant.

\begin{example}
\label{ex:K1-radial-profile}
For the set \(\cK_1\) of Example~\ref{ex:K1-no-norm-base},
\[
\rho_{\cK_1}(a,b)=
\begin{cases}
\max\{1/a,1/b\}, & (a,b)\in S_X,\ a>0,\ b>0,\\
+\infty, & \text{otherwise}.
\end{cases}
\]
Consequently,
\[
\dom(\rho_{\cK_1})=\{(a,b)\in S_X:a>0,\ b>0\},
\qquad
d_{\cK_1}=\sqrt2,
\qquad
\mathcal I_{\cK_1}=+\infty.
\]
\end{example}

\begin{proof}
Let \(v=(a,b)\in S_X\). Then \(\lambda v\in\cK_1\) if and only if
\(\lambda a\ge1 \quad\text{and}\quad \lambda b\ge1.\)
This gives the formula. The minimum of \(\max\{1/a,1/b\}\) on the open
first quadrant of \(S_X\) is attained at \(a=b=1/\sqrt2\). The supremum is
\(+\infty\) as \(a\downarrow0\) or \(b\downarrow0\).
\end{proof}

\noindent The next example has the same domain but a different radial growth.

\begin{example}
\label{ex:K2-radial-profile}
For \(\cK_2\) of Example~\ref{ex:K2-no-norm-base},
\[
\rho_{\cK_2}(a,b)=
\begin{cases}
1/\sqrt{ab}, & (a,b)\in S_X,\ a>0,\ b>0,\\
+\infty, & \text{otherwise}.
\end{cases}
\]
Hence \(\dom(\rho_{\cK_2})=\{(a,b)\in S_X:a>0,\ b>0\}\),
\(d_{\cK_2}=\sqrt2\), and \(\mathcal I_{\cK_2}=+\infty\).
\end{example}

\begin{proof}
For \(v=(a,b)\in S_X\), one has \(\lambda v\in\cK_2\) iff
\(a>0\), \(b>0\), and \(\lambda^2ab\ge1\). This gives the formula. The
minimum follows from \(\max\{ab:a^2+b^2=1,\ a,b>0\}=1/2\), and the
supremum is \(+\infty\) as \(a\downarrow0\) or \(b\downarrow0\).
\end{proof}

\noindent Bishop--Phelps type co-radiant sets have an equally explicit profile.

\begin{example}
\label{ex:K3-radial-profile}
For \(\cK_3\) of Example~\ref{ex:K3-no-norm-base},
\[
\rho_{\cK_3}(v)=
\begin{cases}
\dfrac{\lambda}{\langle f,v\rangle-\alpha},
& v\in S_X,\ \langle f,v\rangle>\alpha,\\[2mm]
+\infty,
& v\in S_X,\ \langle f,v\rangle\le\alpha.
\end{cases}
\]
Hence \(\dom(\rho_{\cK_3})=\{v\in S_X:\langle f,v\rangle>\alpha\}\),
\(d_{\cK_3}=\lambda/(\|f\|_2-\alpha)\), and
\(\mathcal I_{\cK_3}=+\infty\).
\end{example}

\begin{proof}
For \(v\in S_X\), one has \(\mu v\in\cK_3\) iff
\(\mu(\langle f,v\rangle-\alpha)\ge\lambda\). This gives the formula and
the domain. The minimum is obtained at \(v=f/\|f\|_2\). The supremum is
\(+\infty\) as \(v\) approaches
\(\{v\in S_X:\langle f,v\rangle=\alpha\}\) from the domain.
\end{proof}

\noindent The final profile records the unbounded oscillation of the pathological example.

\begin{example}
\label{ex:K4-radial-profile}
For the set \(\cK_4\) of Example~\ref{ex:K4-no-norm-base}, one has
\(\rho_{\cK_4}(u_s)=\phi(s) \qquad \forall s\in[0,+\infty].\)
Hence
\[
\dom(\rho_{\cK_4})=S_X\cap\Rp^2,
\qquad
d_{\cK_4}=1,
\qquad
\mathcal I_{\cK_4}=+\infty.
\]
\end{example}

\begin{proof}
By definition, \(ru_s\in\cK_4\) if and only if \(r\ge\phi(s)\). Thus
\(\rho_{\cK_4}(u_s)=\phi(s).\)
The directions \(u_s\), \(s\in[0,+\infty]\), are exactly
\(S_X\cap\Rp^2\). Since \(\phi\ge1\) and \(\phi(s)=1\) for irrational \(s\),
one has \(d_{\cK_4}=1\). Finally, \(\mathcal I_{\cK_4}=+\infty\), because
\(\phi\) is unbounded on every nondegenerate interval contained in
\(\R_{++}\). For instance,
\(s_q:=\frac{q+1}{q}\in[1,2] \quad\text{satisfies}\quad \phi(s_q)=q.\)
\end{proof}

\subsection{Radial Profile Characterizations of Norm-bases and Regularity}
\label{subsec:regularity_radial_profile}
This subsection is devoted to characterizations of norm bases and regularity in co-radiant sets in terms of the radial profile function. We further return to the examples of co-radiant sets introduced earlier and apply the obtained criteria to study their regularity. In particular, we prove that three of them are regular, whereas one is not.

\begin{lemma}
\label{lem:norm_base_radial_profile}
Let \(\cK\subset X\) be co-radiant set. Then \(\cK\) admits a norm-base if and only if
\(\rho_{\cK}\) is bounded above on \(\dom(\rho_{\cK})\). In this case,
\(\cK\) is regular. 
\end{lemma}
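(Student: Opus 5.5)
The equivalence part is essentially already available: by Lemma~\ref{lem:basic_radial_profile}\textup{(iii)}, \(\mathcal I_{\cK}=\inf\{t>0:\cK\cap tS_X\text{ is a norm-base of }\cK\}\). If \(\cK\) admits a norm-base at level \(t\), then \(\mathcal I_{\cK}\le t<+\infty\), so \(\rho_{\cK}\) is bounded above on \(\dom(\rho_{\cK})\). Conversely, if \(\rho_{\cK}\le M\) on \(\dom(\rho_{\cK})\), then \(\mathcal I_{\cK}\le M<+\infty\). The set \(T\) of admissible levels is then nonempty; directly, any \(t>\mathcal I_{\cK}\) works, as shown in the proof of that lemma. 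So I would simply cite Lemma~\ref{lem:basic_radial_profile}\textup{(iii)} together with Remark~\ref{rem:norm_base_bounded_radial_profile}, perhaps recalling the co-radiance step: for \(v\in\dom(\rho_{\cK})\) some \(\lambda<t\) has \(\lambda v\in\cK\), hence \(tv\in\cK\).

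For regularity, fix \(\varepsilon>0\) with \(\cK^{-\varepsilon}\neq\varnothing\). The plan is to show that \(\cK^{-\varepsilon}\) has bounded radial profile and then apply the equivalence just proved to the co-radiant set \(\cK^{-\varepsilon}\). The key observation is monotonicity of the profile under intersection with a cone. Write \(L:=(\cone(\cK))^{-\varepsilon}\), a cone by Lemma~\ref{lem:basic_properties_interior_reductions}\textup{(iii)}. For \(v\in\dom(\rho_{\cK^{-\varepsilon}})\), some \(\mu v\in\cK\cap L\) with \(\mu>0\), so \(v\in L\) and \(v\in\dom(\rho_{\cK})\). Then for every \(\lambda>0\) with \(\lambda v\in\cK\) we also have \(\lambda v\in L\), so \(\lambda v\in\cK^{-\varepsilon}\). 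Hence \(\rho_{\cK^{-\varepsilon}}(v)=\rho_{\cK}(v)\le\mathcal I_{\cK}<+\infty\). This gives boundedness, and therefore \(\cK^{-\varepsilon}\) admits a norm-base.

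Alternatively, and equally short, one can argue directly: take \(t>\mathcal I_{\cK}\) and check that \(\cK^{-\varepsilon}\cap tS_X\) is a norm-base. For \(y\in\cK^{-\varepsilon}\), the point \(ty/\|y\|\) lies in \(\cK\) by the first part and in \(L\) because \(L\) is a cone.

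There is no real obstacle here. The only point requiring care is that \(\cK^{-\varepsilon}\) is nonempty and co-radiant, so that the first part applies to it; the paper has already noted this after Definition~\ref{def:interior_reduction_coradiant_set}. One should also be sure to use the cone property of \(L\) rather than any closedness.
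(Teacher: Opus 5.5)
Your proof is correct and essentially the paper's: the equivalence is read off from Lemma~\ref{lem:basic_radial_profile}\textup{(iii)} as you do, and your ``alternative'' direct argument is exactly the paper's regularity proof (the paper takes any admissible norm-base level \(t\), not necessarily \(t>\mathcal I_{\cK}\), and uses that \(u=\lambda^{-1}y\) lies in the cone \((\cone(\cK))^{-\varepsilon}\)). Your primary route via the identity \(\rho_{\cK^{-\varepsilon}}=\rho_{\cK}\) on \(\dom(\rho_{\cK^{-\varepsilon}})\) rests on the same cone-invariance observation, which the paper itself uses later for \(A_n=\cK\cap K_n\) in Proposition~\ref{prop:regularity_bounded_radial_profile_characterization}.
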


\begin{proof}
By Lemma~\ref{lem:basic_radial_profile},
\(\mathcal I_{\cK} = \inf\{t>0:\cK\cap tS_X\text{ is a norm-base of }\cK\}.\)
Thus \(\cK\) admits a norm-base if and only if
\(\mathcal I_{\cK}<+\infty,\)
which is equivalent to boundedness of \(\rho_{\cK}\) on
\(\dom(\rho_{\cK})\).

Assume that \(\cK\) admits a norm-base. Let \(t>0\) be such that
\(\cK\cap tS_X\) is a norm-base of \(\cK\). Let \(\varepsilon>0\) satisfy
\(\cK^{-\varepsilon}\neq\varnothing.\)
We prove that \(\cK^{-\varepsilon}\cap tS_X\) is a norm-base of
\(\cK^{-\varepsilon}\).

Let \(y\in\cK^{-\varepsilon}\). There exist \(\lambda>0\) and
\(u\in\cK\cap tS_X\) such that
\(y=\lambda u.\)
Since \(y\in(\cone(\cK))^{-\varepsilon}\) and
\((\cone(\cK))^{-\varepsilon}\) is a cone,
\(u=\lambda^{-1}y\in(\cone(\cK))^{-\varepsilon}.\)
Thus
\(u\in\cK\cap(\cone(\cK))^{-\varepsilon}\cap tS_X = \cK^{-\varepsilon}\cap tS_X.\)
Hence \(\cK^{-\varepsilon}\cap tS_X\) is a norm-base of
\(\cK^{-\varepsilon}\). Therefore \(\cK\) is regular.
\end{proof}

\noindent A global Lipschitz bound is a simple sufficient condition.

\begin{corollary}
\label{cor:lipschitz_radial_profile_norm_base}
Let \(\cK\subset X\) be co-radiant set. If \(\rho_{\cK}\) is Lipschitz continuous on
\(\dom(\rho_{\cK})\), then \(\cK\) admits a norm-base. In particular,
\(\cK\) is regular.
\end{corollary}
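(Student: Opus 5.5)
The plan is to show directly that a Lipschitz radial profile is bounded above on its domain. Boundedness then gives a norm-base by Lemma~\ref{lem:norm_base_radial_profile}, which also yields regularity. The key observation is that \(\dom(\rho_{\cK})\) lies inside the unit sphere \(S_X\), so it has diameter at most \(2\). A Lipschitz function on a bounded set with at least one finite value is therefore bounded.

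First, I check that \(\dom(\rho_{\cK})\) is nonempty and that \(\rho_{\cK}\) is real-valued there. Since \(\cK\neq\varnothing\) and \(0_X\notin\cK\), take \(k\in\cK\). Then \(v_0:=k/\|k\|\in\cone(\cK)\cap S_X=\dom(\rho_{\cK})\) by Lemma~\ref{lem:basic_radial_profile}\textup{(i)}, and \(\rho_{\cK}(v_0)\le\|k\|<+\infty\). By definition of the domain, \(\rho_{\cK}\) takes finite values on all of \(\dom(\rho_{\cK})\), so the Lipschitz hypothesis makes sense there with some constant \(L:=\Lip(\rho_{\cK})<+\infty\).

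Next, for any \(v\in\dom(\rho_{\cK})\) I estimate
\[
\rho_{\cK}(v)\le\rho_{\cK}(v_0)+L\|v-v_0\|\le\rho_{\cK}(v_0)+L(\|v\|+\|v_0\|)=\rho_{\cK}(v_0)+2L .
\]
This gives \(\mathcal I_{\cK}\le\rho_{\cK}(v_0)+2L<+\infty\). Hence \(\rho_{\cK}\) is bounded above on \(\dom(\rho_{\cK})\).

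Finally, Lemma~\ref{lem:norm_base_radial_profile} (equivalently, Remark~\ref{rem:norm_base_bounded_radial_profile}) gives that \(\cK\) admits a norm-base, and the same lemma gives that \(\cK\) is regular. There is no real obstacle here. The only point needing care is the finiteness of \(\rho_{\cK}\) on its domain, so that the Lipschitz inequality can be applied to real numbers; this holds by the definition of \(\dom(\rho_{\cK})\).
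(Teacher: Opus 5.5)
Your proposal is correct and matches the paper's proof: fix \(v_0\in\dom(\rho_{\cK})\), use \(\dom(\rho_{\cK})\subset S_X\) to get \(\rho_{\cK}\le\rho_{\cK}(v_0)+2\Lip(\rho_{\cK})\), and then apply Lemma~\ref{lem:norm_base_radial_profile} for both the norm-base and regularity. Your explicit check that \(\dom(\rho_{\cK})\neq\varnothing\) (from \(\cK\neq\varnothing\) and \(0_X\notin\cK\)) is a small detail the paper leaves implicit.
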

\begin{proof}
Fix \(v_0\in\dom(\rho_{\cK})\). For every \(u\in\dom(\rho_{\cK})\),
\[
\rho_{\cK}(u)
\le
\rho_{\cK}(v_0)+\Lip(\rho_{\cK})\|u-v_0\|
\le
\rho_{\cK}(v_0)+2\Lip(\rho_{\cK}),
\]
because \(\dom(\rho_{\cK})\subset S_X\). Hence \(\rho_{\cK}\) is bounded
above on \(\dom(\rho_{\cK})\). The result follows from
Lemma~\ref{lem:norm_base_radial_profile}.
\end{proof}

\noindent Regularity only requires boundedness on inner spherical sections.

\begin{proposition}
\label{prop:regularity_bounded_radial_profile_characterization}
Let \(\cK\subset X\) be a co-radiant set, and let \(K:=\cone(\cK)\). Suppose that \(K\) is solid.
Then the following assertions are equivalent:
\begin{enumerate}[label=\textup{(\roman*)}]
\item \(\cK\) is regular.
\item There exist an inner approximating sequence of cones \((K_n)\) for \(K\)
and \(n_0\in\N\) such that, for every \(n\ge n_0\),
\(\rho_{\cK}\) is bounded above on \(S_X\cap K_n\).
\end{enumerate}
\end{proposition}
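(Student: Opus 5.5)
The plan is to reduce the statement to Proposition~\ref{prop:regularity_inner_approx_characterization}. The bridge is a pointwise identity relating the radial profile of \(\cK\) on the sphere section of a cone \(L\subset K\) to that of \(\cK\cap L\). For any cone \(L\subset K\) and \(v\in S_X\cap L\), one has \(\lambda v\in\cK\) iff \(\lambda v\in\cK\cap L\), because \(\lambda v\in L\) automatically. Hence
\[
\rho_{\cK\cap L}(v)=\rho_{\cK}(v)\qquad\forall v\in S_X\cap L,
\]
while \(\rho_{\cK\cap L}(v)=+\infty\) for \(v\in S_X\setminus L\). If \(L\) is nontrivial, Proposition~\ref{prop:cone_of_intersection_with_subcone} gives \(\cone(\cK\cap L)=L\). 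Lemma~\ref{lem:basic_radial_profile}(i) then yields \(\dom(\rho_{\cK\cap L})=S_X\cap L\).

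\textup{(i)}\(\Rightarrow\)\textup{(ii)}. Assume \(\cK\) is regular. Proposition~\ref{prop:regularity_inner_approx_characterization} provides an inner approximating sequence \((K_n)\) and \(n_0\) such that \(\cK\cap K_n\) admits a norm-base for \(n\ge n_0\). In the proof there, \(K_n=K^{-1/n}\), which is nontrivial for large \(n\) by Lemma~\ref{lem:basic_properties_interior_reductions}(iv); this may require enlarging \(n_0\). By Lemma~\ref{lem:norm_base_radial_profile}, \(\rho_{\cK\cap K_n}\) is bounded above on its domain \(S_X\cap K_n\). By the identity above, \(\rho_{\cK}\) is bounded above on \(S_X\cap K_n\).

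\textup{(ii)}\(\Rightarrow\)\textup{(i)}. Fix \(n\ge n_0\). If \(K_n=\{0_X\}\), then \(S_X\cap K_n=\varnothing\). In that case \(\cK\cap K_n=\varnothing\), and the condition is vacuous. This degenerate case has to be handled: increase \(n_0\) so that \(K_n\) is nontrivial. This is possible because \(K^{-\varepsilon}\subset K_n\) eventually and \(K^{-\varepsilon}\) is nontrivial for small \(\varepsilon\), since \(K\) is solid. For nontrivial \(K_n\subset K\), the identity shows that \(\rho_{\cK\cap K_n}\) is bounded above on \(\dom(\rho_{\cK\cap K_n})=S_X\cap K_n\). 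Lemma~\ref{lem:norm_base_radial_profile} then says \(\cK\cap K_n\) admits a norm-base, and Proposition~\ref{prop:regularity_inner_approx_characterization} gives regularity.

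The only delicate points are the domain identification via Proposition~\ref{prop:cone_of_intersection_with_subcone} and the nontriviality of \(K_n\). Both rely on solidity of \(K\), and I expect the bookkeeping on \(n_0\) to be the main, though minor, obstacle.
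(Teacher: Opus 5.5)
Your proposal is correct and follows essentially the paper's route: identify \(\rho_{\cK\cap K_n}\) with \(\rho_{\cK}\) on \(S_X\cap K_n\), convert boundedness into the norm-base property via Lemma~\ref{lem:norm_base_radial_profile}, and conclude through Proposition~\ref{prop:regularity_inner_approx_characterization}. The differences are cosmetic. You obtain the domain identity from Proposition~\ref{prop:cone_of_intersection_with_subcone}, and nonemptiness of \(\cK\cap K_n\) from nontriviality of \(K^{-\varepsilon}\). The paper instead checks the domain directly and picks \(x\in\cK\cap\intt K\), using Proposition~\ref{prop:inner_approximating_sequence_covers_interior}.
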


\begin{proof}
Let \((K_n)\) be an inner approximating sequence for \(K\), and set
\(A_n:=\cK\cap K_n \qquad(n\in\N).\)
Each \(A_n\) is co-radiant.

Assume that \(A_n\neq\varnothing\). We claim that
\(\dom(\rho_{A_n})=S_X\cap K_n\)
and
\(\rho_{A_n}(v)=\rho_{\cK}(v) \qquad \forall v\in S_X\cap K_n.\)
Let \(v\in S_X\cap K_n\). Since \(K_n\subset K=\cone(\cK)\), there exists
\(\lambda>0\) such that \(\lambda v\in\cK\). Since \(K_n\) is a cone,
\(\lambda v\in K_n\). Hence \(\lambda v\in A_n\), and so
\(v\in\dom(\rho_{A_n})\). Conversely, if \(v\in\dom(\rho_{A_n})\), then
\(\lambda v\in A_n\subset K_n\) for some \(\lambda>0\). Since \(K_n\) is a
cone, \(v\in K_n\). This proves the identity of domains. Also, for
\(v\in S_X\cap K_n\) and \(\lambda>0\),
\(\lambda v\in A_n \quad\Longleftrightarrow\quad \lambda v\in\cK.\)
Hence the two profiles coincide on \(S_X\cap K_n\).

Hence whenever \(A_n\neq\varnothing\), \(\rho_{\cK}\) is bounded
above on \(S_X\cap K_n\) if and only if \(\rho_{A_n}\) is bounded above on
\(\dom(\rho_{A_n})\). By Lemma~\ref{lem:norm_base_radial_profile}, this is
equivalent to saying that \(A_n\) admits a norm-base.

Assume first that \(\cK\) is regular. By
Proposition~\ref{prop:regularity_inner_approx_characterization}, there exist
an inner approximating sequence \((K_n)\) for \(K\) and \(n_0\in\N\) such
that \(A_n=\cK\cap K_n\) admits a norm-base for every \(n\ge n_0\). By the
previous paragraph, \(\rho_{\cK}\) is bounded above on \(S_X\cap K_n\) for
every \(n\ge n_0\). Thus \textup{(ii)} holds.

Conversely, assume \textup{(ii)}. We prove that \(A_n\neq\varnothing\) for
all sufficiently large \(n\). Since \(K=\cone(\cK)\) is solid,
\(\cK\cap\intt K\neq\varnothing.\)
Indeed, if \(\cK\cap\intt K=\varnothing\), then
\(\cK\subset K\setminus\intt K\subset\bd K.\)
Since \(K\) is a cone, every positive multiple of a nonzero boundary point
of \(K\) belongs to \(\bd K\). Hence
\(\cone(\cK)\subset\bd K\cup\{0_X\},\)
contrary to the solidity of \(K=\cone(\cK)\).

Choose \(x\in\cK\cap\intt K\). Since \((K_n)\) is an inner approximating
sequence for \(K\), Proposition~\ref{prop:inner_approximating_sequence_covers_interior}
gives \(n_1\in\N\) such that
\(x\in K_n \qquad \forall n\ge n_1.\)
Hence \(A_n\neq\varnothing\) for every \(n\ge n_1\).

Let \(n\ge\max\{n_0,n_1\}\). By \textup{(ii)}, \(\rho_{\cK}\) is bounded
above on \(S_X\cap K_n\). Since \(A_n\neq\varnothing\), the first part of
the proof gives that \(\rho_{A_n}\) is bounded above on
\(\dom(\rho_{A_n})\). By Lemma~\ref{lem:norm_base_radial_profile}, \(A_n\)
admits a norm-base.

Thus \(A_n=\cK\cap K_n\) admits a norm-base for every sufficiently large
\(n\). By Proposition~\ref{prop:regularity_inner_approx_characterization},
\(\cK\) is regular.
\end{proof}

\noindent The preceding distinction is global versus local.

\begin{remark}
\label{rem:global_vs_local_radial_profile}
Corollary~\ref{cor:lipschitz_radial_profile_norm_base} is global: it implies
that \(\cK\) admits a norm-base. Proposition~\ref{prop:regularity_bounded_radial_profile_characterization}
is local: it only requires boundedness of \(\rho_{\cK}\) on the sections
\(S_X\cap K_n\) of an inner approximating sequence of \(\cone(\cK)\).
\end{remark}

\noindent In finite dimension, upper semicontinuity yields regularity.

\begin{proposition}
\label{prop:regularity_finite_dimension}
Assume that \(X\) is finite-dimensional. Let \(\cK\subset X\) be a co-radiant set,
and put \(K:=\cone(\cK)\). If \(K\) is solid and \(\rho_{\cK}\) is upper semicontinuous
on \(\dom(\rho_{\cK})\), then \(\cK\) is regular.
\end{proposition}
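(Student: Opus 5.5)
The plan is to verify condition (ii) of Proposition~\ref{prop:regularity_bounded_radial_profile_characterization} using the canonical inner approximating sequence, and to obtain the required boundedness from compactness together with upper semicontinuity. Set \(K_n:=K^{-1/n}\). Since \(K\) is solid, Proposition~\ref{prop:canonical_inner_approximating_sequence} shows that \((K_n)\) is a closed inner approximating sequence for \(K\). Following the convention of the subsequent remark, we discard any trivial initial terms. It then suffices to prove that \(\rho_{\cK}\) is bounded above on \(S_X\cap K_n\) for every \(n\).

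Fix \(n\). By Lemma~\ref{lem:basic_properties_interior_reductions}(iii), \(K_n\) is a closed cone. Because \(X\) is finite-dimensional, \(S_X\) is compact, and so \(S_X\cap K_n\) is compact. By Lemma~\ref{lem:basic_properties_interior_reductions}(i), \(K_n\subset K\). Lemma~\ref{lem:basic_radial_profile}(i) then gives
\[
S_X\cap K_n\subset S_X\cap\cone(\cK)=\dom(\rho_{\cK}).
\]
Hence \(\rho_{\cK}\) is finite-valued on the compact set \(S_X\cap K_n\). Its restriction to this set is upper semicontinuous, because upper semicontinuity on \(\dom(\rho_{\cK})\) passes to subsets with the relative topology.

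The main step is the standard fact that a finite-valued upper semicontinuous function on a compact set is bounded above. For each \(v\in S_X\cap K_n\), the set \(U_v:=\{u\in S_X\cap K_n:\rho_{\cK}(u)<\rho_{\cK}(v)+1\}\) is relatively open by upper semicontinuity and contains \(v\). Compactness yields finitely many points \(v_1,\dots,v_m\) whose sets \(U_{v_i}\) cover \(S_X\cap K_n\). Consequently \(\rho_{\cK}\le\max_i\rho_{\cK}(v_i)+1<+\infty\) on \(S_X\cap K_n\); if \(S_X\cap K_n=\varnothing\), the bound holds vacuously. The only point requiring care is the inclusion \(S_X\cap K_n\subset\dom(\rho_{\cK})\), which guarantees that the values \(\rho_{\cK}(v_i)\) are finite; this inclusion is exactly what \(K^{-1/n}\subset K\) provides.

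Condition (ii) of Proposition~\ref{prop:regularity_bounded_radial_profile_characterization} therefore holds with \(n_0=1\). Since \(K\) is solid, that proposition gives that \(\cK\) is regular.
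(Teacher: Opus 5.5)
Your proposal is correct and follows essentially the same route as the paper. Both take the canonical sequence \(K^{-1/n}\), use compactness of \(S_X\cap K^{-1/n}\) in finite dimension together with the inclusion \(S_X\cap K^{-1/n}\subset\dom(\rho_{\cK})\) and boundedness of a finite upper semicontinuous function on a compact set, and then conclude via Proposition~\ref{prop:regularity_bounded_radial_profile_characterization}. The only cosmetic differences are that you discard trivial initial terms via the remark instead of picking \(n_0\) from Lemma~\ref{lem:basic_properties_interior_reductions}(iv), and that you write out the finite-subcover argument that the paper cites as standard.
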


\begin{proof}
Set
\(K_n:=K^{-1/n} \quad(n\in\N).\)
By Proposition~\ref{prop:canonical_inner_approximating_sequence},
\((K_n)\) is a closed inner approximating sequence for \(K\). Since \(K\) is
solid, Lemma~\ref{lem:basic_properties_interior_reductions} gives
\(n_0\in\N\) such that
\(K_n\setminus\{0_X\}\neq\varnothing \qquad \forall n\ge n_0.\)

Fix \(n\ge n_0\). Since \(K_n\) is a closed cone, \(S_X\cap K_n\) is closed
in \(S_X\). Since \(X\) is finite-dimensional, \(S_X\) is compact. Hence
\(S_X\cap K_n\) is compact. Moreover,
\(S_X\cap K_n\subset S_X\cap K=\dom(\rho_{\cK}),\)
because \(K_n\subset K=\cone(\cK)\). Therefore \(\rho_{\cK}\) is finite-valued
and upper semicontinuous on the compact set \(S_X\cap K_n\). Hence
\(\rho_{\cK}\) is bounded above on \(S_X\cap K_n\).

Thus the boundedness condition in
Proposition~\ref{prop:regularity_bounded_radial_profile_characterization}
holds for every \(n\ge n_0\). Therefore \(\cK\) is regular.
\end{proof}

\noindent We return to the examples: the first three are regular without norm-bases. The fourth is not regular.

\noindent The translated quadrant is regular.

\begin{example}[Example~\ref{ex:K1-no-norm-base}, continued]
\label{ex:K1-regularity}
The set \(\cK_1=(1,1)+\Rp^2\) is regular.
\end{example}

\begin{proof}
By Example~\ref{ex:K1-radial-profile},
\(\dom(\rho_{\cK_1})=\{(a,b)\in S_X:\ a>0,\ b>0\}\) and
\(\rho_{\cK_1}(a,b)=\max\{1/a,1/b\}\). Hence \(\rho_{\cK_1}\) is continuous
on its domain. Also, by Example~\ref{ex:K1-no-norm-base},
\(\cone(\cK_1)=\R_{++}^2\cup\{0_X\}\), which is solid. Since \(X=\R^2\),
Proposition~\ref{prop:regularity_finite_dimension} gives the result.
\end{proof}

\noindent The hyperbolic epigraph is regular for the same reason.

\begin{example}[Example~\ref{ex:K2-no-norm-base}, continued]
\label{ex:K2-regularity}
The set \(\cK_2=\{(x,y)\in\R^2:\ x>0,\ y\ge1/x\}\) is regular.
\end{example}

\begin{proof}
By Examples~\ref{ex:K2-no-norm-base} and~\ref{ex:K2-radial-profile},
\(\cone(\cK_2)=\R_{++}^2\cup\{0_X\}\) is solid and
\(\rho_{\cK_2}(a,b)=1/\sqrt{ab}\) is continuous on
\(\dom(\rho_{\cK_2})=\{(a,b)\in S_X:\ a>0,\ b>0\}\). Since \(X=\R^2\),
Proposition~\ref{prop:regularity_finite_dimension} applies.
\end{proof}

\noindent The Bishop--Phelps type example is also regular.

\begin{example}[Example~\ref{ex:K3-no-norm-base}, continued]
\label{ex:K3-regularity}
Let \(X=\R^2\) be Euclidean, let \(f\in X^*\setminus\{0\}\),
\(0<\alpha<\|f\|_*\), and \(\lambda>0\). Then
\(\cK_3=\{x\in X:f(x)-\alpha\|x\|\ge\lambda\}\)
is regular.
\end{example}

\begin{proof}
Let \(v\in S_X\). For \(\mu>0\), we have
\[
\mu v\in\cK_3
\quad\Longleftrightarrow\quad
\mu\bigl(f(v)-\alpha\bigr)\ge\lambda.
\]
Hence \(\Rp v\cap\cK_3\neq\varnothing\) if and only if \(f(v)>\alpha\), and in that
case
\(\rho_{\cK_3}(v)=\frac{\lambda}{f(v)-\alpha}.\)
Therefore,
\(\dom(\rho_{\cK_3})=\{v\in S_X:f(v)>\alpha\},\)
and \(\rho_{\cK_3}\) is continuous on its domain.

Moreover, by Example~\ref{ex:K3-no-norm-base},
\[
\cone(\cK_3)=\intt C(f,\alpha)\cup\{0_X\},
\qquad
C(f,\alpha):=\{x\in X:f(x)-\alpha\|x\|\ge0\},
\]
and this cone is solid. Since \(X=\R^2\), Proposition~\ref{prop:regularity_finite_dimension}
applies.
\end{proof}

\noindent The last example shows that regularity is not automatic.

\begin{example}[Example~\ref{ex:K4-no-norm-base}, continued]
\label{ex:K4-nonregularity}
The set \(\cK_4\) satisfies \(\cone(\cK_4)=\Rp^2\), but it is not regular.
\end{example}

\begin{proof}
Set \(K:=\cone(\cK_4)=\Rp^2\). Let
\(\Gamma:=\{u_s:\ s\in[1,2]\}\). Since \(\Gamma\) is compact and
\(\Gamma\subset\intt K\cap S_X\), there exists \(\varepsilon>0\) such that
\(\Gamma\subset K^{-\varepsilon}\cap S_X\). As \(K^{-\varepsilon}\) is a
cone, \(\Rp u_s\subset K^{-\varepsilon}\) for every \(s\in[1,2]\).

Set \(\cK_4^{-\varepsilon}:=\cK_4\cap K^{-\varepsilon}\). By
Example~\ref{ex:K4-radial-profile},
\[
\rho_{\cK_4^{-\varepsilon}}(u_s)
=
\rho_{\cK_4}(u_s)
=
\phi(s),
\qquad s\in[1,2].
\]
The function \(\phi\) is unbounded on \([1,2]\): for \(q\ge2\),
\(s_q=(q+1)/q\in[1,2]\) and \(\phi(s_q)=q\). Hence
\(\rho_{\cK_4^{-\varepsilon}}\) is unbounded above on its domain. By
Remark~\ref{rem:norm_base_bounded_radial_profile},
\(\cK_4^{-\varepsilon}\) has no norm-base. Since
\(\cK_4^{-\varepsilon}\neq\varnothing\), \(\cK_4\) is not regular.
\end{proof}

\subsection{Radial Scalarization and Approximate Efficiency}
\label{subsec:radial_scalarization_approximate_efficiency}
We begin this subsection by introducing terminology related to approximate efficiency.

Let \(Y\) be a normed space, \(\Omega\subset X\) a nonempty set, \(f:\Omega\to Y\), and
\(\cK\subset Y\) a co-radiant set. For \(\varepsilon>0\), define
\(\cK(\varepsilon):=\varepsilon\cK\). A point \(\bar x\in\Omega\) is said to be
\(\cK(\varepsilon)\)-approximately efficient if
\(f(\Omega)\cap\bigl(f(\bar x)-\cK(\varepsilon)\bigr)=\varnothing.\)
Equivalently,
\(f(\bar x)-f(x)\notin\cK(\varepsilon) \qquad \forall x\in\Omega.\)

Next, using the radial profile function, we introduce a function $\phi_{\mathcal K}$ associated with each co-radiant set $\mathcal K$, which plays the role of a characteristic-type function for $\mathcal K$. This construction will allow us to derive scalarization results for approximate efficiency problems.

Define \(\phi_{\cK}:Y\to[0,+\infty]\) by
\[
\phi_{\cK}(y):=
\begin{cases}
0, & \text{if } y=0_Y,\\[1ex]
\dfrac{\|y\|}{\rho_{\cK}(y/\|y\|)}, & \text{if } y\neq0_Y,
\end{cases}
\]
with the conventions
\[
\frac{a}{+\infty}:=0,
\qquad
\frac{a}{0}:=+\infty,
\qquad
a(+\infty):=+\infty
\qquad (a>0).
\]
The indeterminate product \(0\cdot(+\infty)\) will not arise below, since scalar multiplication of \(+\infty\) is used only with strictly positive scalars.
Thus,
\[
\phi_{\cK}(ty)=t\phi_{\cK}(y)
\qquad
\forall\,t>0,\quad \forall\,y\in Y,
\]
while \(\phi_{\cK}(0_Y)=0\). Therefore, for every \(\varepsilon>0\),
\[
\rho_{\cK(\varepsilon)}(u)=\varepsilon\rho_{\cK}(u)
\qquad (u\in S_Y),
\]
and hence \(\phi_{\cK(\varepsilon)}(y)=\varepsilon^{-1}\phi_{\cK}(y)\) for every \(y\in Y\).

\noindent The following inclusion provides a first one-sided scalarization.
\begin{lemma}
\label{lem:radial_scalarization_basic_inclusion}
For every \(\varepsilon>0\),
\(\cK(\varepsilon) \subset \{y\in Y:\phi_{\cK}(y)\ge\varepsilon\}.\)
\end{lemma}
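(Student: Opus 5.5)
The plan is to unwind the definitions directly, using only the radial profile of \(\cK\) and the homogeneity already recorded for \(\phi_{\cK}\). Fix \(\varepsilon>0\) and take \(y\in\cK(\varepsilon)=\varepsilon\cK\), so \(y=\varepsilon k\) for some \(k\in\cK\). Since \(0_Y\notin\cK\), we have \(k\neq0_Y\) and hence \(y\neq0_Y\). The first case of the definition of \(\phi_{\cK}\) therefore never arises, and
\[
\phi_{\cK}(y)=\frac{\|y\|}{\rho_{\cK}(y/\|y\|)}.
\]

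Set \(v:=y/\|y\|=k/\|k\|\in S_Y\). Then \(\|k\|\,v=k\in\cK\), so \(\|k\|\) is an admissible value in the infimum defining \(\rho_{\cK}(v)\). This gives \(\rho_{\cK}(v)\le\|k\|\). In particular \(v\in\dom(\rho_{\cK})\), so \(\rho_{\cK}(v)<+\infty\) and the convention \(a/(+\infty)=0\) is not involved. The remaining degenerate case is \(\rho_{\cK}(v)=0\). There the convention \(a/0=+\infty\) gives \(\phi_{\cK}(y)=+\infty\ge\varepsilon\) at once, since \(\|y\|>0\).

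In the main case \(0<\rho_{\cK}(v)\le\|k\|\), we get
\[
\phi_{\cK}(y)=\frac{\varepsilon\|k\|}{\rho_{\cK}(v)}\ge\frac{\varepsilon\|k\|}{\|k\|}=\varepsilon.
\]
Alternatively, one can use \(\phi_{\cK}(y)=\varepsilon\,\phi_{\cK}(k)\) and show \(\phi_{\cK}(k)\ge1\) for every \(k\in\cK\), which is the same estimate with \(\varepsilon=1\).

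No step here is a real obstacle. The only care needed is with the extended-valued conventions: checking that \(y\neq0_Y\), that \(\rho_{\cK}(v)\) is finite, and handling \(\rho_{\cK}(v)=0\) separately. That last case cannot occur when \(d_{\cK}>0\) by Lemma~\ref{lem:basic_radial_profile}(ii), but it need not be excluded in general, and the convention covers it anyway.
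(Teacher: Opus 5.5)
Your proof is correct and follows essentially the same route as the paper: write \(y=\varepsilon k\), use \(\rho_{\cK}(k/\|k\|)\le\|k\|<+\infty\), handle \(\rho_{\cK}=0\) through the convention, and conclude. The only difference is that you estimate \(\phi_{\cK}(y)\) directly, whereas the paper proves \(\phi_{\cK}(k)\ge1\) and then uses positive homogeneity; this is the alternative you already note.
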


\begin{proof}
Let \(y\in\cK(\varepsilon)\). Then \(y=\varepsilon k\) for some
\(k\in\cK\). Since \(0_Y\notin\cK\), \(k\neq0_Y\). Set
\(u:=k/\|k\|\). Since \(k=\|k\|u\in\cK\),
\[
\rho_{\cK}(u)\le\|k\|<+\infty.
\]
If \(\rho_{\cK}(u)=0\), then \(\phi_{\cK}(k)=+\infty\). If
\(\rho_{\cK}(u)>0\), then
\[
\phi_{\cK}(k)
=
\frac{\|k\|}{\rho_{\cK}(u)}
\ge1.
\]
Thus \(\phi_{\cK}(k)\ge1\) in either case, and positive homogeneity yields
\[
\phi_{\cK}(y)
=
\varepsilon\phi_{\cK}(k)
\ge\varepsilon.
\]
\end{proof}

\noindent This immediately yields a sufficient condition for approximate efficiency.

\begin{proposition}
\label{prop:sufficient_radial_scalarization_approx_efficiency}
Let \(\varepsilon>0\) and \(\bar x\in\Omega\). If
\(\phi_{\cK}\bigl(f(\bar x)-f(x)\bigr)<\varepsilon \qquad \forall x\in\Omega,\)
then \(\bar x\) is \(\cK(\varepsilon)\)-approximately efficient.
\end{proposition}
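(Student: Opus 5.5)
This follows directly from the contrapositive of Lemma~\ref{lem:radial_scalarization_basic_inclusion}. We need to show that \(f(\bar x)-f(x)\notin\cK(\varepsilon)\) for every \(x\in\Omega\), which is the stated equivalent form of \(\cK(\varepsilon)\)-approximate efficiency.

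Fix \(x\in\Omega\) and set \(y:=f(\bar x)-f(x)\in Y\). Suppose, for contradiction, that \(y\in\cK(\varepsilon)\). Lemma~\ref{lem:radial_scalarization_basic_inclusion} gives the inclusion \(\cK(\varepsilon)\subset\{y\in Y:\phi_{\cK}(y)\ge\varepsilon\}\), so \(\phi_{\cK}(y)\ge\varepsilon\). This contradicts the hypothesis \(\phi_{\cK}(y)<\varepsilon\). Hence \(y\notin\cK(\varepsilon)\). Since \(x\in\Omega\) was arbitrary, \(f(\Omega)\cap\bigl(f(\bar x)-\cK(\varepsilon)\bigr)=\varnothing\).

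The case \(y=0_Y\) needs no separate treatment. Here \(\phi_{\cK}(0_Y)=0<\varepsilon\), and indeed \(0_Y\notin\cK(\varepsilon)\) because \(0_Y\notin\cK\). The lemma already covers this case, since it was proved for all points of \(\cK(\varepsilon)\).

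The only thing to check is that the extended-valued conventions do not interfere with the comparison \(\phi_{\cK}(y)<\varepsilon\). The hypothesis forces \(\phi_{\cK}(y)\) to be finite, so comparing it with \(\varepsilon\) in \([0,+\infty]\) is unambiguous.
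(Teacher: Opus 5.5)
Your proposal is correct and follows the paper's own proof: fix \(x\in\Omega\), set \(y:=f(\bar x)-f(x)\), and note that \(y\in\cK(\varepsilon)\) would force \(\phi_{\cK}(y)\ge\varepsilon\) by Lemma~\ref{lem:radial_scalarization_basic_inclusion}, which contradicts the hypothesis. Your remarks on the case \(y=0_Y\) and on the extended-valued conventions are accurate, but the argument does not need them.
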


\begin{proof}
Fix \(x\in\Omega\) and set
\(y:=f(\bar x)-f(x).\)
If \(y\in\cK(\varepsilon)\), then
Lemma~\ref{lem:radial_scalarization_basic_inclusion} gives
\(\phi_{\cK}(y)\ge\varepsilon,\)
contrary to the assumption. Hence
\(f(\bar x)-f(x)\notin\cK(\varepsilon) \qquad \forall x\in\Omega.\)
This proves the result.
\end{proof}

\noindent The associated residual is obtained by taking the worst scalar violation. Define
\[
\Psi_{\cK}(\bar x)
:=
\sup_{x\in\Omega}
\phi_{\cK}\bigl(f(\bar x)-f(x)\bigr),
\qquad \bar x\in\Omega.
\]
Then
\[
\Psi_{\cK}(\bar x)<\varepsilon
\quad\Longrightarrow\quad
\bar x
\text{ is }
\cK(\varepsilon)\text{-approximately efficient}.
\]

\noindent Exactness requires that the profile reconstructs the co-radiant set.

\begin{definition}
\label{def:radial_reconstruction}
The co-radiant set \(\cK\) is radially reconstructed by \(\phi_{\cK}\) if
\(\cK = \{y\in Y:\phi_{\cK}(y)\ge1\}.\)
\end{definition}

\noindent Under radial reconstruction, the scaled sets are exactly scalar level sets.

\begin{lemma}
\label{lem:radial_reconstruction_scaled}
If \(\cK\) is radially reconstructed by \(\phi_{\cK}\), then, for every
\(\varepsilon>0\),
\(\cK(\varepsilon) = \{y\in Y:\phi_{\cK}(y)\ge\varepsilon\}.\)
\end{lemma}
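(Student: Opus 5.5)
The plan is to prove the two inclusions separately. The inclusion \(\cK(\varepsilon)\subset\{y\in Y:\phi_{\cK}(y)\ge\varepsilon\}\) is exactly Lemma~\ref{lem:radial_scalarization_basic_inclusion}, which holds for every co-radiant set. Radial reconstruction is therefore needed only for the reverse inclusion. The natural tool is the positive homogeneity \(\phi_{\cK}(ty)=t\phi_{\cK}(y)\) for \(t>0\), recorded just before Lemma~\ref{lem:radial_scalarization_basic_inclusion}. It lets us transport the level set \(\{\phi_{\cK}\ge1\}\) to the level set \(\{\phi_{\cK}\ge\varepsilon\}\) by the dilation \(y\mapsto\varepsilon y\).

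For the reverse inclusion, let \(y\in Y\) with \(\phi_{\cK}(y)\ge\varepsilon\), and set \(k:=\varepsilon^{-1}y\). First note that \(y\neq0_Y\), since \(\phi_{\cK}(0_Y)=0<\varepsilon\). Hence \(k\neq0_Y\), and homogeneity with \(t=\varepsilon^{-1}>0\) gives
\[
\phi_{\cK}(k)=\varepsilon^{-1}\phi_{\cK}(y)\ge1.
\]
This also covers the value \(+\infty\), because \(a(+\infty)=+\infty\) for \(a>0\). By radial reconstruction (Definition~\ref{def:radial_reconstruction}), \(k\in\cK\), and therefore \(y=\varepsilon k\in\varepsilon\cK=\cK(\varepsilon)\).

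The only step needing care is the use of homogeneity when \(\phi_{\cK}\) takes the values \(0\) or \(+\infty\). Both are harmless here. The value \(0\) is excluded because \(\varepsilon>0\), which rules out \(y=0_Y\). For \(y\neq0_Y\) we have \((\varepsilon^{-1}y)/\|\varepsilon^{-1}y\|=y/\|y\|\), so
\[
\phi_{\cK}(\varepsilon^{-1}y)=\frac{\varepsilon^{-1}\|y\|}{\rho_{\cK}(y/\|y\|)},
\]
and the stated conventions give \(\varepsilon^{-1}\phi_{\cK}(y)\) in every case, including \(\rho_{\cK}(y/\|y\|)\in\{0,+\infty\}\). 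The two inclusions together give \(\cK(\varepsilon)=\{y\in Y:\phi_{\cK}(y)\ge\varepsilon\}\).
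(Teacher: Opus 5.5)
Your proposal is correct and follows the paper's proof: the forward inclusion is Lemma~\ref{lem:radial_scalarization_basic_inclusion}, and the reverse inclusion notes $y\neq 0_Y$, uses positive homogeneity to get $\phi_{\cK}(\varepsilon^{-1}y)\ge 1$, and applies radial reconstruction to conclude $\varepsilon^{-1}y\in\cK$. Your extra check of the conventions when $\rho_{\cK}(y/\|y\|)\in\{0,+\infty\}$ only makes explicit the homogeneity step that the paper uses without comment.
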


\begin{proof}
The inclusion \(\subset\) follows from
Lemma~\ref{lem:radial_scalarization_basic_inclusion}. Conversely, let
\(\phi_{\cK}(y)\ge\varepsilon\). Since \(\varepsilon>0\), necessarily
\(y\neq0_Y\). By positive homogeneity,
\[
\phi_{\cK}\left(\frac{1}{\varepsilon}y\right)
=
\frac{1}{\varepsilon}\phi_{\cK}(y)
\ge1.
\]
By radial reconstruction,
\(\frac{1}{\varepsilon}y\in\cK.\)
Hence \(y\in\varepsilon\cK=\cK(\varepsilon)\).
\end{proof}

\noindent This gives a pointwise exact scalarization of approximate efficiency.

\begin{proposition}
\label{prop:exact_radial_scalarization_approx_efficiency}
Assume that \(\cK\) is radially reconstructed by \(\phi_{\cK}\). Let
\(\varepsilon>0\) and \(\bar x\in\Omega\). Then \(\bar x\) is
\(\cK(\varepsilon)\)-approximately efficient if and only if
\[\phi_{\cK}\bigl(f(\bar x)-f(x)\bigr)<\varepsilon \qquad \forall x\in\Omega.\]
\end{proposition}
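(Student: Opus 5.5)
The proof combines the definition of approximate efficiency with Lemma~\ref{lem:radial_reconstruction_scaled}. Under radial reconstruction, that lemma identifies \(\cK(\varepsilon)\) with the superlevel set \(\{y\in Y:\phi_{\cK}(y)\ge\varepsilon\}\) for every \(\varepsilon>0\). Its complement in \(Y\) is therefore exactly \(\{y\in Y:\phi_{\cK}(y)<\varepsilon\}\), where values are taken in \([0,+\infty]\), so \(\phi_{\cK}(y)=+\infty\) falls on the \(\ge\varepsilon\) side. The whole statement then reduces to rewriting membership conditions pointwise.

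First, recall that \(\bar x\) is \(\cK(\varepsilon)\)-approximately efficient if and only if \(f(\bar x)-f(x)\notin\cK(\varepsilon)\) for every \(x\in\Omega\).

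Next, fix \(x\in\Omega\) and set \(y:=f(\bar x)-f(x)\). By Lemma~\ref{lem:radial_reconstruction_scaled}, \(y\notin\cK(\varepsilon)\) holds if and only if \(\phi_{\cK}(y)\ge\varepsilon\) fails. Since \(\phi_{\cK}\) takes values in the totally ordered set \([0,+\infty]\), this is equivalent to \(\phi_{\cK}(y)<\varepsilon\).

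Finally, quantifying over all \(x\in\Omega\) gives the equivalence claimed in the proposition. The sufficiency direction also follows independently from Proposition~\ref{prop:sufficient_radial_scalarization_approx_efficiency}, so only necessity genuinely uses radial reconstruction, through the inclusion \(\supset\) in Lemma~\ref{lem:radial_reconstruction_scaled}.

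There is essentially no obstacle. The only point requiring care is the extended-valued convention: one must check that \(\phi_{\cK}(y)=+\infty\) is correctly classified, and it is, because \(+\infty\ge\varepsilon\) places such \(y\) inside \(\cK(\varepsilon)\) by the lemma. The case \(y=0_Y\), where \(x\) gives the same value as \(\bar x\), is also consistent: \(\phi_{\cK}(0_Y)=0<\varepsilon\), and \(0_Y\notin\cK(\varepsilon)\) because \(0_Y\notin\cK\).
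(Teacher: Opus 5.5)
Your proposal is correct and follows the paper's proof: it rewrites approximate efficiency as $f(\bar x)-f(x)\notin\cK(\varepsilon)$ for all $x\in\Omega$ and then uses Lemma~\ref{lem:radial_reconstruction_scaled} to turn this into $\phi_{\cK}\bigl(f(\bar x)-f(x)\bigr)<\varepsilon$ pointwise. Your side remarks are also accurate: the value $+\infty$ lands on the $\ge\varepsilon$ side, the case $y=0_Y$ is consistent, and only necessity needs the reverse inclusion from radial reconstruction.
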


\begin{proof}
By definition, \(\bar x\) is \(\cK(\varepsilon)\)-approximately efficient if
and only if
\(f(\bar x)-f(x)\notin\cK(\varepsilon) \quad \forall x\in\Omega.\)
By Lemma~\ref{lem:radial_reconstruction_scaled}, this is equivalent to
\(\phi_{\cK}\bigl(f(\bar x)-f(x)\bigr)<\varepsilon \quad \forall x\in\Omega.\)
\end{proof}

\noindent The residual gives a strict test only under a uniform margin.

\begin{corollary}
\label{cor:radial_scalarization_with_strict_margin}
Assume that \(\cK\) is radially reconstructed by \(\phi_{\cK}\). Let
\(\varepsilon>0\) and \(\bar x\in\Omega\). If
\(\Psi_{\cK}(\bar x)<\varepsilon,\)
then \(\bar x\) is \(\cK(\varepsilon)\)-approximately efficient. Conversely,
if \(\bar x\) is \(\cK(\varepsilon)\)-approximately efficient, then
\[\phi_{\cK}\bigl(f(\bar x)-f(x)\bigr)<\varepsilon \qquad \forall x\in\Omega,\]
and therefore
\(\Psi_{\cK}(\bar x)\le\varepsilon.\)
\end{corollary}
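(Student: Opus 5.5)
The corollary follows by combining Proposition~\ref{prop:sufficient_radial_scalarization_approx_efficiency} with Proposition~\ref{prop:exact_radial_scalarization_approx_efficiency}; there is essentially no new ingredient. The first assertion uses only the definition of \(\Psi_{\cK}\) as a supremum. The converse uses the radial reconstruction hypothesis, followed by passage to the supremum.

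For the first assertion, assume \(\Psi_{\cK}(\bar x)<\varepsilon\). For every \(x\in\Omega\),
\[
\phi_{\cK}\bigl(f(\bar x)-f(x)\bigr)\le\Psi_{\cK}(\bar x)<\varepsilon .
\]
So the hypothesis of Proposition~\ref{prop:sufficient_radial_scalarization_approx_efficiency} holds, and \(\bar x\) is \(\cK(\varepsilon)\)-approximately efficient. This direction does not need radial reconstruction; Lemma~\ref{lem:radial_scalarization_basic_inclusion} suffices.

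For the converse, assume \(\bar x\) is \(\cK(\varepsilon)\)-approximately efficient. By radial reconstruction, Proposition~\ref{prop:exact_radial_scalarization_approx_efficiency} (which rests on Lemma~\ref{lem:radial_reconstruction_scaled}) gives
\[
\phi_{\cK}\bigl(f(\bar x)-f(x)\bigr)<\varepsilon \qquad \forall x\in\Omega .
\]
Taking the supremum over \(x\in\Omega\), we get \(\Psi_{\cK}(\bar x)\le\varepsilon\). The bound may be attained as a supremum without being attained pointwise, which is why only the non-strict inequality survives.

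The only point needing care is the extended-valued conventions: \(\phi_{\cK}\) may take the value \(+\infty\), and then \(\Psi_{\cK}(\bar x)\) may be \(+\infty\). In the first direction, \(\Psi_{\cK}(\bar x)<\varepsilon\) forces every term to be finite and below \(\varepsilon\). In the second direction, every term is strictly below the finite number \(\varepsilon\), so the supremum is finite and at most \(\varepsilon\). No step is a real obstacle. The content of the corollary is the remark that the strict test is sufficient but, as a statement about \(\Psi_{\cK}\), not necessary, because a supremum of values all strictly below \(\varepsilon\) can equal \(\varepsilon\).
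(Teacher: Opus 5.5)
Your proposal is correct and follows the paper's proof: you get the first assertion from Proposition~\ref{prop:sufficient_radial_scalarization_approx_efficiency} via the pointwise bound \(\phi_{\cK}(f(\bar x)-f(x))\le\Psi_{\cK}(\bar x)\), and the converse from Proposition~\ref{prop:exact_radial_scalarization_approx_efficiency} followed by taking suprema. You also correctly note that the first direction does not need radial reconstruction and that the extended-valued conventions cause no trouble.
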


\begin{proof}
The first assertion follows from
Proposition~\ref{prop:sufficient_radial_scalarization_approx_efficiency}.
The converse follows from
Proposition~\ref{prop:exact_radial_scalarization_approx_efficiency}. Taking
suprema gives \(\Psi_{\cK}(\bar x)\le\varepsilon\).
\end{proof}

\noindent The final remark records the gap between pointwise and uniform strictness.

\begin{remark}
\label{rem:supremum_gap_radial_scalarization}
The pointwise strict condition
\(\phi_{\cK}\bigl(f(\bar x)-f(x)\bigr)<\varepsilon \qquad \forall x\in\Omega\)
does not imply, in general, that
\(\Psi_{\cK}(\bar x)<\varepsilon.\)
It only implies \(\Psi_{\cK}(\bar x)\le\varepsilon\). Thus the exact
scalarization is pointwise. The residual \(\Psi_{\cK}\) gives a strict
scalar test only when a uniform margin below \(\varepsilon\) is available.
\end{remark}

\begin{example}
\label{ex:radial_scalarization_uniform_gap}
Let \(Y=\R^2\) be Euclidean and define
\(\cK:= \{y=(y_1,y_2)\in\R^2:\ y_2-|y_1|\ge1\}.\)
Then \(\cK\) is co-radiant. Moreover, for \(u=(u_1,u_2)\in S_Y\),
\[
\rho_{\cK}(u)
=
\begin{cases}
\dfrac{1}{u_2-|u_1|}, & \text{if } u_2>|u_1|,\\[1.2ex]
+\infty, & \text{if } u_2\le |u_1|.
\end{cases}
\]
Consequently,
\(\phi_{\cK}(y) = (y_2-|y_1|)_+, \qquad y=(y_1,y_2)\in\R^2.\)
In particular,
\(\cK=\{y\in\R^2:\phi_{\cK}(y)\ge1\},\)
so \(\cK\) is radially reconstructed by \(\phi_{\cK}\).

\medskip

Now let \(\Omega=[0,1)\), define \(f:\Omega\to\R^2\) by
\(f(t):=(0,-t), \qquad t\in[0,1),\)
and take \(\bar x=0\). Then, for every \(t\in[0,1)\),
\(f(\bar x)-f(t)=(0,t),\)
and therefore
\(\phi_{\cK}\bigl(f(\bar x)-f(t)\bigr)=t<1.\)
By Proposition~\ref{prop:exact_radial_scalarization_approx_efficiency},
\(\bar x\) is \(\cK(1)\)-approximately efficient.

However,
\[
\Psi_{\cK}(\bar x)
=
\sup_{t\in[0,1)}
\phi_{\cK}\bigl(f(\bar x)-f(t)\bigr)
=
\sup_{t\in[0,1)}t
=
1.
\]
Thus \(\bar x\) is \(\cK(1)\)-approximately efficient, but the strict residual test
\(\Psi_{\cK}(\bar x)<1\) fails. This illustrates the gap between the pointwise exact
scalarization and the uniform strict test given by the residual \(\Psi_{\cK}\).
\end{example}

\section{A Separation Theorem for Regular Co-Radiant Sets}
\label{sec:main_separation_theorem}
In this section we establish the separation result for regular co-radiant sets. In contrast with the classical setting, where the existence of a norm base plays a central role, here such an assumption is replaced by the weaker notion of regularity. The proof relies on the conical framework developed in the previous section, in particular on the stability properties of the  SSP along inner approximating sequences. This allows us to reduce the general separation problem to an appropriate approximating family of cones, thereby extending the classical separation theory for co-radiant sets.

The result shows that, under the SSP for the pair of cones generated by the sets, regularity of the second co-radiant set is sufficient to guarantee the existence of a hyperbolic-type separating functional. More precisely, the theorem provides a separation between the closed the convex hull of $\mathcal{C}$ and suitable outer enlargements of $\mathcal{K}$, expressed in terms of the 
Bishop--Phelps type scalar function $f(x)-\alpha\|x\|$.

\begin{theorem}[Regular co-radiant separation theorem]
\label{teorema:separacion_coradiantes_regular}
Let \(\cC,\cK\subset X\) be co-radiant sets such that
\[
d_{\cC}d_{\cK}>0,\qquad
\cK \text{ is regular},\qquad
\clco(\cC)\cap\intt(\cK)\neq\varnothing.
\]
Assume that \((\cone(\cC),\cone(\cK))\) verifies the SSP. Then there exist
\(0<\alpha_1<\alpha_2<1\) and \(f\in S_{X^*}\) such that:
\begin{enumerate}[label=\textup{(\alph*)}]
\item
\(
\lambda:=
\inf\{f(y)-\alpha\|y\|:\ y\in\cC,\ \alpha\in(\alpha_1,\alpha_2)\}>0.
\)
\item There exists \(\eta_0>0\) such that, for every \(\eta\in(0,\eta_0)\),
\[f(y')-\alpha\|y'\| < \frac{\eta}{\eta_0}\lambda < \lambda < f(y)-\alpha\|y\|\]
for every \(\alpha\in(\alpha_1,\alpha_2)\), every
\(y'\in X\setminus\intt(\cK(\eta))\), and every \(y\in\clco(\cC)\).
\end{enumerate}
\end{theorem}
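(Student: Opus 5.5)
The plan is to reduce to the norm-base case, i.e.\ the separation theorem of \cite{GarciaCastanoMelguizo2025JGO}, by passing to an interior reduction of \(\cK\) that admits a norm-base. Regularity supplies the norm-base, and Theorem~\ref{teo:ssp_inner_approximating_sequence} keeps the SSP. Set \(C:=\cone(\cC)\) and \(K:=\cone(\cK)\).

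\textbf{Step 1 (nondegeneracy).} Since \(\clco(\cC)\cap\intt(\cK)\neq\varnothing\), the cone \(K\) is solid. I would also check that \(C\cap K\neq\{0_X\}\), since otherwise SSP and the hypothesis look incompatible. If this holds, \cite[Theorem 3.3]{GarciaCastanoMelguizoParzanese2023} gives \(C\setminus\{0_X\}\subset\intt K\).

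\textbf{Step 2 (approximation).} Let \(K_n:=K^{-1/n}\), which is a closed inner approximating sequence by Proposition~\ref{prop:canonical_inner_approximating_sequence}. Theorem~\ref{teo:ssp_inner_approximating_sequence} gives \(n_0\) such that \((C,K_n)\) verifies the SSP for \(n\ge n_0\). Put \(\cK_n:=\cK\cap K_n=\cK^{-1/n}\). Then:
\begin{itemize}
\item by regularity, \(\cK_n\) admits a norm-base once it is nonempty;
\item by Proposition~\ref{prop:cone_of_intersection_with_subcone}, \(\cone(\cK_n)=K_n\);
\item \(d_{\cK_n}\ge d_{\cK}>0\), because \(\cK_n\subset\cK\).
\end{itemize}

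\textbf{Step 3 (interior point).} It remains to show \(\clco(\cC)\cap\intt(\cK_n)\neq\varnothing\) for large \(n\). Take \(z\in\clco(\cC)\cap\intt\cK\). Then \(z\in\intt K\), so \(u:=z/\|z\|\) satisfies \(\dist(u,\vee K)\ge\delta>0\). Arguing as in Lemma~\ref{lem:basic_properties_interior_reductions}(iv), every direction within \(\delta/4\) of \(u\) lies in \(K^{-\varepsilon}\) for \(\varepsilon<\delta/8\). Hence a ball around \(z\) is contained in \(K^{-\varepsilon}\subset K_n\) for \(n\) large. Intersecting this ball with a ball around \(z\) inside \(\cK\) gives \(z\in\intt(\cK\cap K_n)\). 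Fix such an \(n\ge n_0\).

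\textbf{Step 4 (transfer back).} Applying the norm-base separation theorem to the pair \((\cC,\cK_n)\) yields \(f\in S_{X^*}\), \(0<\alpha_1<\alpha_2<1\), \(\lambda>0\) and \(\eta_0\). Item (a) is then immediate. For item (b), since \(\cK_n\subset\cK\) we have \(\intt(\cK_n(\eta))\subset\intt(\cK(\eta))\). Therefore \(X\setminus\intt(\cK(\eta))\subset X\setminus\intt(\cK_n(\eta))\), and the inequality for \(y'\) transfers verbatim. The inequality on \(\clco(\cC)\) does not involve \(\cK\) at all.

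\textbf{Main obstacle.} The delicate point is matching the exact hypotheses of the cited norm-base theorem. These include the interior-point condition from Step 3 and the requirement that \(\eta_0\) and \(\lambda\) be produced in the form stated. If the citation does not fit, I would prove the norm-base case directly:
\begin{itemize}
\item Apply Theorem~\ref{thm:primer_tma_separacion_C_y_K} to \((-C,-K_n)\) to get \(f\) and an \(\alpha\)-interval, normalize \(f\), and shrink the interval so that \(\alpha_2<1\).
\item Obtain \(\lambda>0\) from \(f(y)-\alpha\|y\|\ge c\|y\|\) on \(C\) together with \(\|y\|\ge d_{\cC}>0\); the uniform constant \(c\) comes from a slightly larger \(\alpha\).
\item For \(y'\notin\intt\cK_n(\eta)\), split into two cases. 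If \(y'/\|y'\|\) is outside \(\intt K_n\), the boundary inequality gives \(f(y')-\alpha\|y'\|\le\) a small quantity. Otherwise the norm-base radius \(t\) forces \(\|y'\|\le\eta t\), which gives \(f(y')-\alpha\|y'\|\le 2\eta t\). Choosing \(\eta_0\) with \(2t\le\lambda/\eta_0\) then gives the bound \(\frac{\eta}{\eta_0}\lambda\).
\end{itemize}
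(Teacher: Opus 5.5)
Your Steps 1--4 are the paper's skeleton: pass to \(H_n=\cone(\cK)^{-1/n}\), get the SSP from Theorem~\ref{teo:ssp_inner_approximating_sequence}, a norm-base for \(\cK'=\cK\cap H_n\) from regularity, and \(\cone(\cK')=H_n\) from Proposition~\ref{prop:cone_of_intersection_with_subcone}. Then cite \cite[Theorem 4.6]{GarciaCastanoMelguizo2025JGO} and transfer through \(X\setminus\intt(\cK(\eta))\subset X\setminus\intt(\cK'(\eta))\).

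The one point where ``transfers verbatim'' falls short is the factor \(\eta/\eta_0\). As the paper uses it, the cited theorem only gives \(f(y')-\alpha\|y'\|<\lambda\) for \(\eta\in(0,\bar\eta_0)\), with \(\bar\eta_0=\lambda/\mathcal I_{\cK'}\). The paper upgrades this by homogeneity. Fix \(\eta_0\in(0,\bar\eta_0)\), and for \(\eta<\eta_0\) set \(t=\eta_0/\eta>1\). Since \(t\,\intt(\cK(\eta))=\intt(\cK(\eta_0))\), one gets \(ty'\notin\intt(\cK(\eta_0))\), hence \(t\,(f(y')-\alpha\|y'\|)<\lambda\).

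Your fallback reaches the linear bound by a different, self-contained route. It uses \((\cK_n)_{\ge t}=(K_n)_{\ge t}\) for the norm-base radius \(t\) to show that \(\|y'\|\le\eta t\) whenever \(y'\in\intt K_n\setminus\intt(\cK_n(\eta))\). This gives an explicit \(\eta_0\) in terms of \(t\) and avoids the black box; note that it never invokes \(d_{\cK}>0\). The paper's route is shorter but depends entirely on the cited theorem plus the rescaling trick.

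Your Case A needs more than stated. The SSP boundary inequality only controls points of \(\bd K_n\). What you need is \(f(y')-\alpha\|y'\|\le 0\) on all of \(X\setminus\intt K_n\); a merely ``small'' bound is useless, since the target \(\frac{\eta}{\eta_0}\lambda\) tends to \(0\). The fix comes from concavity of \(p_\alpha=f-\alpha\|\cdot\|\):
\begin{itemize}
\item Let \(z\) be your Step~3 point in \(\clco(\cC)\cap\intt K_n\); then \(p_\alpha(z)>0\).
\item If \(p_\alpha(y')>0\), concavity gives \(p_\alpha>0\) on the whole segment \([z,y']\).
\item That segment meets \(\bd K_n\) at some point \(w\) with \(p_\alpha(w)>0\), so \(w\neq 0_X\). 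This contradicts \(p_\alpha<0\) on \(\bd K_n\setminus\{0_X\}\).
\end{itemize}
So your interior version of Step~3 is exactly what the fallback requires. The paper's citation only needs \(x_0\in\cK'\), which Proposition~\ref{prop:inner_approximating_sequence_covers_interior} supplies.

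Finally, the check \(C\cap K\neq\{0_X\}\) in Step~1 is unnecessary, since Theorem~\ref{teo:ssp_inner_approximating_sequence} also covers the case \(C\cap K=\{0_X\}\).
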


\begin{proof}
Set
\(H:=\cone(\cK).\)
Since \(\clco(\cC)\cap\intt(\cK)\neq\varnothing\), choose
\(x_0\in\clco(\cC)\cap\intt(\cK).\)
Then \(x_0\neq0_X\) and \(x_0\in\intt H\). In particular, \(H\) is solid.

Let
\(H_n:=H^{-1/n} \quad(n\in\N).\)
By Proposition~\ref{prop:canonical_inner_approximating_sequence},
\((H_n)\) is a closed inner approximating sequence for \(H\).

By Theorem~\ref{teo:ssp_inner_approximating_sequence}, applied to the pair
\((\cone(\cC),H)\), there exists \(n_1\in\N\) such that
\((\cone(\cC),H_n)\text{ verifies the SSP} \quad \forall n\ge n_1.\)
By Proposition~\ref{prop:inner_approximating_sequence_covers_interior}, there
exists \(n_2\in\N\) such that
\(x_0\in H_n \quad \forall n\ge n_2.\)
Since \(\cK\) is regular and \(x_0\in\cK\cap H_n\) for \(n\ge n_2\), the set
\(\cK\cap H_n = \cK\cap H^{-1/n} = \cK^{-1/n}\)
admits a norm-base for every \(n\ge n_2\).
Choose
\(n\ge\max\{n_1,n_2\}\)
and set
\(\cK':=\cK\cap H_n.\)
Since \(\cK'\subset\cK\), we have
\(
d_{\cK'}\ge d_{\cK}>0.
\)
Moreover, since \(\cK'\) admits a norm-base,
\[
0<d_{\cK'}\le \mathcal I_{\cK'}<+\infty.
\]
Then \(\cK'\) admits a norm-base, \(x_0\in\cK'\), and
\((\cone(\cC),H_n)\text{ verifies the SSP}.\)
By Proposition~\ref{prop:cone_of_intersection_with_subcone},
\(\cone(\cK')=\cone(\cK\cap H_n)=H_n.\)
Hence
\((\cone(\cC),\cone(\cK'))\) verifies the SSP.
Moreover,
\(\clco(\cC)\cap\cK'\neq\varnothing,\)
because \(x_0\in\clco(\cC)\cap\cK'\).

Therefore the hypotheses of \cite[Theorem 4.6]{GarciaCastanoMelguizo2025JGO} are satisfied
by the pair \((\cC,\cK')\). Thus there exist
\(0<\alpha_1<\alpha_2<1\), \(f\in S_{X^*}\), and
\(\lambda:= \inf\{f(y)-\alpha\|y\|:\ y\in\cC,\ \alpha\in(\alpha_1,\alpha_2)\}>0\)
such that, with
\(\bar\eta_0:=\frac{\lambda}{\mathcal I_{\cK'}},\)
one has, for every \(\eta\in(0,\bar\eta_0)\),
\(f(y')-\alpha\|y'\|<\lambda<f(y)-\alpha\|y\|\)
for every \(\alpha\in(\alpha_1,\alpha_2)\), every
\(y'\in X\setminus\intt(\cK'(\eta))\), and every \(y\in\clco(\cC)\).

Fix
\(\eta_0\in(0,\bar\eta_0).\)
Since \(\cK'\subset\cK\), for every \(\eta>0\),
\(\cK'(\eta)\subset\cK(\eta),\)
and hence
\(X\setminus\intt(\cK(\eta)) \subset X\setminus\intt(\cK'(\eta)).\)
It follows that, for every \(\eta\in(0,\bar\eta_0)\),
\(f(y')-\alpha\|y'\|<\lambda<f(y)-\alpha\|y\|\)
for every \(\alpha\in(\alpha_1,\alpha_2)\), every
\(y'\in X\setminus\intt(\cK(\eta))\), and every \(y\in\clco(\cC)\).

Let now \(\eta\in(0,\eta_0)\), \(\alpha\in(\alpha_1,\alpha_2)\), and
\(y'\in X\setminus\intt(\cK(\eta)).\)
Set
\(t:=\frac{\eta_0}{\eta}>1.\)
Since scalar multiplication by \(t\) is a homeomorphism,
\[
t\,\intt(\cK(\eta))
=
\intt(t\,\cK(\eta))
=
\intt(\cK(t\eta))
=
\intt(\cK(\eta_0)).
\]
Thus
\(ty'\in X\setminus\intt(\cK(\eta_0)).\)
Since \(\eta_0<\bar\eta_0\), the preceding estimate applied with \(\eta_0\)
gives
\(f(ty')-\alpha\|ty'\|<\lambda.\)
By linearity of \(f\) and homogeneity of the norm,
\(t\bigl(f(y')-\alpha\|y'\|\bigr)<\lambda.\)
Therefore
\(f(y')-\alpha\|y'\| < \frac{\lambda}{t} = \frac{\eta}{\eta_0}\lambda.\)
for every \(y\in\clco(\cC)\),
\(\lambda<f(y)-\alpha\|y\|.\)
Hence
\(f(y')-\alpha\|y'\| < \frac{\eta}{\eta_0}\lambda < \lambda < f(y)-\alpha\|y\|.\)
This proves \textup{(a)} and \textup{(b)}.
\end{proof}

\begin{remark}
It is worth noting that the separation results obtained in \cite{GarciaCastanoMelguizo2025JGO} are recovered as a particular case of the present theorem. Indeed, since every co-radiant set admitting a norm base is regular, the assumptions of Theorem~\ref{teorema:separacion_coradiantes_regular} are satisfied in the setting of \cite{GarciaCastanoMelguizo2025JGO}. In particular, the corresponding separation results for co-radiant sets with norm bases follow by specializing the present framework, thereby showing that the classical theory can be fully embedded into the regularity-based approach developed in this work.
\end{remark}

For later use, we reformulate the preceding separation theorem in the following form, which is tailored to the variational error-bound arguments developed in
Section~\ref{sec:variational_consequence_regular_separation}.

\begin{corollary}
\label{cor:separacion_coradiantes_regular_con_theta}
Let \(\cC,\cK\subset X\) be co-radiant sets such that
\[
d_{\cC}d_{\cK}>0,\qquad
\cK \text{ is regular},\qquad
\clco(\cC)\cap\intt(\cK)\neq\varnothing.
\]
Consider the following assertions:
\begin{enumerate}[label=\textup{(\roman*)}]
\item \((\cone(\cC),\cone(\cK))\) verifies the SSP.
\item There exist \(0<\alpha_1<\alpha_2<1\) and \(f\in S_{X^*}\) such that:
\begin{enumerate}[label=\textup{(\alph*)}]
\item
\(
\lambda:=
\inf\{f(y)-\alpha\|y\|:\ y\in\cC,\ \alpha\in(\alpha_1,\alpha_2)\}>0.
\)
\item There exists \(\eta_0>0\) such that, for every \(0<\theta<1\) and every
\(\eta\in(0,\theta\eta_0)\),
\[f(y')-\alpha\|y'\| < \lambda\theta < \lambda < f(y)-\alpha\|y\|\]
for every \(\alpha\in(\alpha_1,\alpha_2)\), every
\(y'\in X\setminus\intt(\cK(\eta))\), and every \(y\in\clco(\cC)\).
\end{enumerate}
\end{enumerate}
Then \textup{(i)} implies \textup{(ii)}.
\end{corollary}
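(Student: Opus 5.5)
The plan is to obtain (ii) directly from Theorem~\ref{teorema:separacion_coradiantes_regular}, reparametrizing the ratio \(\eta/\eta_0\) as a free parameter \(\theta\).

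Assume (i). Theorem~\ref{teorema:separacion_coradiantes_regular} then gives \(0<\alpha_1<\alpha_2<1\), \(f\in S_{X^*}\), the number \(\lambda>0\) of part (a), and some \(\eta_0>0\). For every \(\eta\in(0,\eta_0)\), every \(\alpha\in(\alpha_1,\alpha_2)\), every \(y'\in X\setminus\intt(\cK(\eta))\) and every \(y\in\clco(\cC)\), it yields
\[
f(y')-\alpha\|y'\|<\frac{\eta}{\eta_0}\lambda<\lambda<f(y)-\alpha\|y\|.
\]
I keep the same \(\alpha_1,\alpha_2,f,\lambda,\eta_0\). Part (a) of (ii) is then literally part (a) of the theorem.

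For part (b), fix \(0<\theta<1\) and \(\eta\in(0,\theta\eta_0)\). Since \(\theta<1\), we have \(\eta<\eta_0\), so the theorem applies at this \(\eta\). Moreover \(\eta/\eta_0<\theta\) and \(\lambda>0\), hence \(\frac{\eta}{\eta_0}\lambda<\theta\lambda\). Also \(\theta\lambda<\lambda\) because \(\theta<1\). Chaining these inequalities gives
\[
f(y')-\alpha\|y'\|<\frac{\eta}{\eta_0}\lambda<\lambda\theta<\lambda<f(y)-\alpha\|y\|
\]
for all admissible \(\alpha\), \(y'\) and \(y\). This is exactly (b).

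There is no real obstacle here. The only points to check are that \(\theta<1\) puts \(\eta\) inside the range \((0,\eta_0)\) covered by the theorem, and that the strict inequality \(\eta<\theta\eta_0\) is preserved when multiplying by \(\lambda/\eta_0>0\).
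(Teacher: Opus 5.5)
Your proposal is correct and follows essentially the same route as the paper: you keep the data \(\alpha_1,\alpha_2,f,\lambda,\eta_0\) from Theorem~\ref{teorema:separacion_coradiantes_regular} and use \(\eta<\theta\eta_0\) to get \(\frac{\eta}{\eta_0}\lambda<\theta\lambda\). You also state explicitly that \(\theta<1\) forces \(\eta<\eta_0\), so the theorem applies; the paper uses this without saying so.
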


\begin{proof}
Assume \textup{(i)}. By Theorem~\ref{teorema:separacion_coradiantes_regular},
there exist \(0<\alpha_1<\alpha_2<1\), \(f\in S_{X^*}\), \(\lambda>0\), and
\(\eta_0>0\) such that, for every \(\eta\in(0,\eta_0)\),
\(f(y')-\alpha\|y'\| < \frac{\eta}{\eta_0}\lambda < \lambda < f(y)-\alpha\|y\|\)
for every \(\alpha\in(\alpha_1,\alpha_2)\), every
\(y'\in X\setminus\intt(\cK(\eta))\), and every \(y\in\clco(\cC)\).

Let \(0<\theta<1\) and \(\eta\in(0,\theta\eta_0)\). Then
\(0<\frac{\eta}{\eta_0}<\theta.\)
Since \(\lambda>0\),
\(\frac{\eta}{\eta_0}\lambda<\theta\lambda.\)
Therefore
\(f(y')-\alpha\|y'\| < \lambda\theta < \lambda < f(y)-\alpha\|y\|\)
for every \(\alpha\in(\alpha_1,\alpha_2)\), every
\(y'\in X\setminus\intt(\cK(\eta))\), and every \(y\in\clco(\cC)\).
Thus \textup{(ii)} holds.
\end{proof}

\section{From Regular Separation to Variational Error Bounds}
\label{sec:variational_consequence_regular_separation}
We extract from Theorem~\ref{teorema:separacion_coradiantes_regular} a scalar residual and a radial depth estimate. This gives an error-bound interpretation of
the regular separation theorem.

Assume throughout this section that the hypotheses of
Corollary~\ref{cor:separacion_coradiantes_regular_con_theta} hold. Thus there
exist \(0<\alpha_1<\alpha_2<1\), \(f\in S_{X^*}\), \(\lambda>0\), and
\(\eta_0>0\) such that, for every \(0<\theta<1\), every
\(\eta\in(0,\theta\eta_0)\), and every \(\alpha\in(\alpha_1,\alpha_2)\),
\(f(y')-\alpha\|y'\| < \lambda\theta < \lambda < f(y)-\alpha\|y\|\)
for every \(y'\in X\setminus\intt(\cK(\eta))\) and every
\(y\in\clco(\cC)\).

For \(\alpha\in(\alpha_1,\alpha_2)\), set
\[
p_\alpha(y):=f(y)-\alpha\|y\|,
\qquad
G_\alpha(y):=(\lambda-p_\alpha(y))_+,
\qquad y\in X.
\]
The function \(G_\alpha\) is the residual associated with the separating
level \(\lambda\).

\subsection{The Radial Depth Function}
\label{subsec:radial_depth_function}
\begin{definition}
\label{def:radial_depth_function}
Let \(\cK\subset X\) be co-radiant. The radial depth function of \(\cK\) is
\[
\delta_{\cK}:X\setminus\{0_X\}\to[0,+\infty],
\qquad
\delta_{\cK}(y):=\sup\{\eta>0:\ y\in\cK(\eta)\},
\]
with the convention \(\sup\varnothing=0\).
\end{definition}
\begin{lemma}
\label{lem:interval_depth}
Let \(\cK\subset X\) be co-radiant and let \(y\in X\). Then
\(J(y):=\{\eta>0:\ y\in\cK(\eta)\}\)
is an interval of one of the following forms:
\[
\varnothing,\qquad
(0,\bar\eta),\qquad
(0,\bar\eta],\qquad
(0,+\infty),
\]
where \(\bar\eta\in(0,+\infty)\).
\end{lemma}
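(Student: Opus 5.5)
The plan is to show that \(J(y)\) is an initial segment of \((0,+\infty)\): whenever \(\eta\in J(y)\) and \(0<\eta'<\eta\), then \(\eta'\in J(y)\). Once this downward closedness is established, the classification follows from elementary properties of subsets of \(\R_{++}\), using \(\bar\eta:=\sup J(y)\).

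\textbf{Step 1: downward closedness.} Let \(\eta\in J(y)\) and \(0<\eta'<\eta\). By definition \(y=\eta k\) for some \(k\in\cK\), so
\[
y=\eta'\Bigl(\frac{\eta}{\eta'}k\Bigr).
\]
Since \(\eta/\eta'\ge1\) and \(\cK\) is co-radiant, \(\frac{\eta}{\eta'}k\in\cK\). Hence \(y\in\cK(\eta')\), that is, \(\eta'\in J(y)\).

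\textbf{Step 2: case analysis.} If \(J(y)=\varnothing\), we are in the first case. Otherwise set \(\bar\eta:=\sup J(y)\in(0,+\infty]\).
\begin{enumerate}[label=\textup{(\alph*)}]
\item If \(\bar\eta=+\infty\), then for every \(s>0\) there is some \(\eta\in J(y)\) with \(\eta>s\). Step 1 then gives \(s\in J(y)\), so \(J(y)=(0,+\infty)\).
\item If \(\bar\eta<+\infty\), then every \(s\in(0,\bar\eta)\) lies below some element of \(J(y)\), so \(s\in J(y)\) by Step 1. Thus \((0,\bar\eta)\subset J(y)\subset(0,\bar\eta]\), and \(J(y)\) is \((0,\bar\eta)\) or \((0,\bar\eta]\) depending on whether \(\bar\eta\in J(y)\).
\end{enumerate}

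\textbf{Remarks.} No closedness of \(\cK\) is assumed, so attainment of \(\bar\eta\) cannot be decided in general; this is why both \((0,\bar\eta)\) and \((0,\bar\eta]\) must be allowed. Note also that \(y=0_X\) gives \(J(y)=\varnothing\), because \(0_X\notin\cK\). There is no real obstacle here: the only substantive ingredient is the co-radiance rescaling in Step 1.
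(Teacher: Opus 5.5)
Your proof is correct and takes essentially the same route as the paper: you establish downward closedness of \(J(y)\) via the co-radiance rescaling \(y=\eta'\bigl(\frac{\eta}{\eta'}k\bigr)\), then classify using \(\bar\eta=\sup J(y)\). The only cosmetic difference is that the paper gets finiteness of \(\bar\eta\) from some \(\eta_0\notin J(y)\) when \(J(y)\neq\varnothing,\R_{++}\), whereas you split directly on whether \(\sup J(y)\) is finite.
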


\begin{proof}
Let \(\eta_2\in J(y)\) and \(0<\eta_1<\eta_2\). Writing
\(y=\eta_2k\) with \(k\in\cK\), co-radiance yields
\[
\frac{\eta_2}{\eta_1}k\in\cK,
\qquad
y=\eta_1\left(\frac{\eta_2}{\eta_1}k\right)\in\eta_1\cK.
\]
Hence \(J(y)\) is downward closed in \(\mathbb R_{++}\). Thus
\(J(y)=\varnothing\) or \(J(y)=\mathbb R_{++}\); otherwise, choosing
\(\eta_0\notin J(y)\) gives \(J(y)\subset(0,\eta_0)\), and therefore
\[
\bar\eta:=\sup J(y)\in(0,+\infty).
\]
Downward closedness then implies
\[
(0,\bar\eta)\subset J(y)\subset(0,\bar\eta],
\]
so \(J(y)=(0,\bar\eta)\) or \(J(y)=(0,\bar\eta]\).
\end{proof}

\begin{remark}
\label{rem:interpretacion_deltaK}
For every \(y\in X\setminus\{0_X\}\) and every \(\eta>0\),
\(\delta_{\cK}(y)>\eta \quad\Longrightarrow\quad y\in\cK(\eta),\)
whereas
\(y\in\cK(\eta) \quad\Longrightarrow\quad \delta_{\cK}(y)\ge\eta.\)
Thus only the endpoint \(\eta=\delta_{\cK}(y)\) may be ambiguous.
If \(\cK\) is closed, the endpoint is attained whenever
\(0<\delta_{\cK}(y)<+\infty\). If \(\cK\) is open, the endpoint is not
attained whenever \(0<\delta_{\cK}(y)<+\infty\).
\end{remark}

\begin{lemma}
\label{lem:basic_properties_deltaK}
Let \(\cK\subset X\) be co-radiant. Then:
\begin{enumerate}[label=\textup{(\roman*)}]
\item for every \(y\in X\setminus\{0_X\}\) and every \(t>0\),
\(\delta_{\cK}(ty)=t\,\delta_{\cK}(y);\)
\item if \(\cK_1,\cK_2\subset X\) are co-radiant and
\(\cK_1\subset\cK_2\), then
\(\delta_{\cK_1}(y)\le\delta_{\cK_2}(y) \quad \forall y\in X\setminus\{0_X\};\)
\item for every \(y\in X\setminus\{0_X\}\),
\[
\delta_{\cK}(y)>1
\quad\Longrightarrow\quad
y\in\cK
\quad\Longrightarrow\quad
\delta_{\cK}(y)\ge1.
\]
Equivalently, for every \(\eta>0\),
\[
\delta_{\cK}(y)>\eta
\quad\Longrightarrow\quad
y\in\cK(\eta),
\qquad
y\in\cK(\eta)
\quad\Longrightarrow\quad
\delta_{\cK}(y)\ge\eta.
\]
\end{enumerate}
\end{lemma}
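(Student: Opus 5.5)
All three items reduce to the interval structure of \(J(y):=\{\eta>0:\ y\in\cK(\eta)\}\) from Lemma~\ref{lem:interval_depth}, together with the identity \(\delta_{\cK}(y)=\sup J(y)\) (with \(\sup\varnothing=0\)). We treat the items in order.

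For \textup{(i)}, fix \(y\neq0_X\) and \(t>0\). We show that \(J(ty)=tJ(y)\). Indeed, \(ty\in\eta\cK\) if and only if \(y\in(\eta/t)\cK\), which says exactly that \(\eta\in J(ty)\) if and only if \(\eta/t\in J(y)\). Taking suprema gives \(\delta_{\cK}(ty)=t\,\delta_{\cK}(y)\). This covers the edge cases as well. If \(J(y)=\varnothing\), then \(J(ty)=\varnothing\) and both sides are \(0\). If \(J(y)=(0,+\infty)\), both sides are \(+\infty\), and \(t\cdot(+\infty)=+\infty\) for \(t>0\).

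For \textup{(ii)}, the inclusion \(\cK_1\subset\cK_2\) gives \(\eta\cK_1\subset\eta\cK_2\) for every \(\eta>0\). Hence \(J_{\cK_1}(y)\subset J_{\cK_2}(y)\), and monotonicity of the supremum, with the convention \(\sup\varnothing=0\le\) anything, yields \(\delta_{\cK_1}(y)\le\delta_{\cK_2}(y)\).

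For \textup{(iii)}, it suffices to prove the \(\eta\)-version; the first displayed chain is the case \(\eta=1\), since \(\cK(1)=\cK\). If \(y\in\cK(\eta)\), then \(\eta\in J(y)\), so \(\delta_{\cK}(y)=\sup J(y)\ge\eta\). Conversely, suppose \(\delta_{\cK}(y)>\eta\). Then \(J(y)\neq\varnothing\), so by Lemma~\ref{lem:interval_depth} it is one of \((0,\bar\eta)\), \((0,\bar\eta]\) or \((0,+\infty)\), with supremum \(\delta_{\cK}(y)\). In every case it contains \((0,\delta_{\cK}(y))\), so \(\eta\in J(y)\) and \(y\in\cK(\eta)\). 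Alternatively, without invoking the lemma: choose \(\eta'\in J(y)\) with \(\eta'>\eta\); then downward closedness, which follows from co-radiance exactly as in the proof of Lemma~\ref{lem:interval_depth}, gives \(\eta\in J(y)\).

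The only delicate point is the bookkeeping with the conventions \(\sup\varnothing=0\) and \(+\infty\) values in \textup{(i)}. In \textup{(iii)}, the hypothesis \(\delta_{\cK}(y)>\eta>0\) already excludes the empty case. No step is a real obstacle.
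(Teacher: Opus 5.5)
Your proof is correct and follows essentially the same route as the paper. You use the scaling identity $J(ty)=tJ(y)$ for (i) and inclusion of the defining sets for (ii). For (iii), you derive $\delta_{\cK}(y)>\eta\Rightarrow y\in\cK(\eta)$ from $\cK(\eta')\subset\cK(\eta)$ for $\eta<\eta'$ (your alternative argument is exactly the paper's, and the detour through the interval lemma is unnecessary), and the converse directly from the definition of the supremum.
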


\begin{proof}
\textup{(i)} For every \(\eta>0\),
\(y\in\cK(\eta) \quad\Longleftrightarrow\quad ty\in t\cK(\eta).\)
Since
\(t\cK(\eta)=t(\eta\cK)=(t\eta)\cK=\cK(t\eta),\)
we have
\(\{\mu>0:\ ty\in\cK(\mu)\} = \{t\eta:\ \eta>0,\ y\in\cK(\eta)\}.\)
Taking suprema gives the assertion.

\textup{(ii)} If \(\cK_1\subset\cK_2\), then
\(\cK_1(\eta)=\eta\cK_1\subset\eta\cK_2=\cK_2(\eta) \qquad \forall\eta>0.\)
The conclusion follows by taking suprema.

\textup{(iii)} Let \(\eta>0\). If \(\delta_{\cK}(y)>\eta\), then there exists
\(\eta'>\eta\) such that \(y\in\cK(\eta')\). Since \(0<\eta<\eta'\) and
\(\cK\) is co-radiant,
\(\cK(\eta')\subset\cK(\eta).\)
Hence \(y\in\cK(\eta)\). Conversely, if \(y\in\cK(\eta)\), then \(\eta\)
belongs to the defining set of \(\delta_{\cK}(y)\), and so
\(\delta_{\cK}(y)\ge\eta\). Taking \(\eta=1\) gives the first implication.
\end{proof}

\subsection{Separation-induced Error Bounds and Variational Consequences}
\label{subsec:separation_induced_error_bounds}

We now convert the separation inequality into a radial depth estimate.

\begin{corollary}
\label{cor:separacion_coradiantes_regular_theta_delta}
For every \(\alpha\in(\alpha_1,\alpha_2)\), every \(0<\theta<1\), and every
\(y\in X\setminus\{0_X\}\),
\[
p_\alpha(y)\ge\lambda\theta
\quad\Longrightarrow\quad
\delta_{\cK}(y)\ge\theta\eta_0.
\]
Equivalently,
\[
\delta_{\cK}(y)<\theta\eta_0
\quad\Longrightarrow\quad
p_\alpha(y)<\lambda\theta.
\]
In particular,
\(p_\alpha(y)\ge\lambda \quad\Longrightarrow\quad \delta_{\cK}(y)\ge\eta_0.\)
\end{corollary}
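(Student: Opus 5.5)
We argue by contraposition, which is the second displayed form of the statement. Fix \(\alpha\in(\alpha_1,\alpha_2)\), \(0<\theta<1\) and \(y\in X\setminus\{0_X\}\), and suppose \(\delta_{\cK}(y)<\theta\eta_0\). The goal is to produce some \(\eta\in(0,\theta\eta_0)\) with \(y\notin\intt(\cK(\eta))\). The standing separation inequality from Corollary~\ref{cor:separacion_coradiantes_regular_con_theta}(ii)(b) then gives \(p_\alpha(y)<\lambda\theta\) at once.

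To find \(\eta\), we choose any \(\eta\) with \(\delta_{\cK}(y)<\eta<\theta\eta_0\); this is possible because \(\delta_{\cK}(y)\ge0\) and the interval is nonempty. By Lemma~\ref{lem:basic_properties_deltaK}(iii), \(y\in\cK(\eta)\) would force \(\delta_{\cK}(y)\ge\eta\), which is false. Hence \(y\notin\cK(\eta)\supset\intt(\cK(\eta))\), so \(y\in X\setminus\intt(\cK(\eta))\). Applying the separation inequality with \(y'=y\) yields \(p_\alpha(y)<\lambda\theta\).

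The direct form is the logical contrapositive of what we have just proved, so it needs no separate argument. For the final assertion, assume \(p_\alpha(y)\ge\lambda\). Then \(p_\alpha(y)\ge\lambda\theta\) for every \(\theta\in(0,1)\), so the first part gives \(\delta_{\cK}(y)\ge\theta\eta_0\) for all such \(\theta\). Letting \(\theta\uparrow1\) gives \(\delta_{\cK}(y)\ge\eta_0\).

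The proof is essentially bookkeeping, and the only point needing care is the choice of \(\eta\). We must take \(\eta\) strictly above \(\delta_{\cK}(y)\) so that membership in \(\cK(\eta)\), and not merely in its interior, is excluded, and strictly below \(\theta\eta_0\) so that the separation estimate applies. Both requirements are met by the strict inequality \(\delta_{\cK}(y)<\theta\eta_0\).
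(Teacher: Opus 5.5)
Your proof is correct and is essentially the paper's argument. The paper frames the first step as a contradiction (assuming \(p_\alpha(y)\ge\lambda\theta\) and \(\delta_{\cK}(y)<\theta\eta_0\)) rather than a contraposition, but it uses the same choice \(\eta\in(\delta_{\cK}(y),\theta\eta_0)\) to get \(y\notin\intt(\cK(\eta))\), the same application of the separation inequality, and the same limit \(\theta\uparrow1\) for the final assertion.
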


\begin{proof}
Fix \(\alpha\in(\alpha_1,\alpha_2)\), \(0<\theta<1\), and
\(y\in X\setminus\{0_X\}\). Assume that \(p_\alpha(y)\ge\lambda\theta\).

Suppose that \(\delta_{\cK}(y)<\theta\eta_0\). Choose
\(\delta_{\cK}(y)<\eta<\theta\eta_0.\)
Then \(y\notin\cK(\eta)\), and therefore
\(y\notin\intt(\cK(\eta))\). By
Corollary~\ref{cor:separacion_coradiantes_regular_con_theta},
\(p_\alpha(y)<\lambda\theta,\)
a contradiction. Hence \(\delta_{\cK}(y)\ge\theta\eta_0\).

The equivalent formulation is the contrapositive. If \(p_\alpha(y)\ge\lambda\),
then \(p_\alpha(y)\ge\lambda\theta\) for every \(0<\theta<1\). Hence
\(\delta_{\cK}(y)\ge\theta\eta_0 \qquad \forall\,0<\theta<1.\)
Letting \(\theta\uparrow1\), we obtain
\(\delta_{\cK}(y)\ge\eta_0.\)
\end{proof}

\begin{corollary}
\label{cor:gap_zero_implies_depth}
For every \(\alpha\in(\alpha_1,\alpha_2)\) and every
\(y\in X\setminus\{0_X\}\),
\(G_\alpha(y)=0 \quad\Longrightarrow\quad \delta_{\cK}(y)\ge\eta_0.\)
\end{corollary}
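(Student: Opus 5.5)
The plan is to reduce the statement directly to the last assertion of Corollary~\ref{cor:separacion_coradiantes_regular_theta_delta}. The residual only rewrites the separating inequality: by definition \(G_\alpha(y)=(\lambda-p_\alpha(y))_+\), so \(G_\alpha(y)=0\) holds exactly when \(\lambda-p_\alpha(y)\le0\), that is, when \(p_\alpha(y)\ge\lambda\). This equivalence uses only the definition \(t_+=\max\{t,0\}\) and is the first step.

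Next, fix \(\alpha\in(\alpha_1,\alpha_2)\) and \(y\in X\setminus\{0_X\}\) with \(G_\alpha(y)=0\). The first step gives \(p_\alpha(y)\ge\lambda\). The final line of Corollary~\ref{cor:separacion_coradiantes_regular_theta_delta} states that \(p_\alpha(y)\ge\lambda\) implies \(\delta_{\cK}(y)\ge\eta_0\), and that yields the conclusion.

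For completeness I would note how that last line is obtained. Since \(\lambda>0\), for each \(0<\theta<1\) one has \(p_\alpha(y)\ge\lambda>\lambda\theta\), so the main implication of the corollary gives \(\delta_{\cK}(y)\ge\theta\eta_0\). Letting \(\theta\uparrow1\) gives \(\delta_{\cK}(y)\ge\eta_0\).

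There is no genuine obstacle here. The only point that needs care is the restriction \(y\neq0_X\), because \(\delta_{\cK}\) is defined only on \(X\setminus\{0_X\}\). This restriction is already built into the statement. It is also consistent with the setting: \(G_\alpha(0_X)=\lambda>0\), so the origin never lies in the zero set of the residual.
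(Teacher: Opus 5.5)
Your proof is correct and is the paper's argument. The paper also observes that \(G_\alpha(y)=0\) forces \(p_\alpha(y)\ge\lambda\), and then invokes the final assertion of Corollary~\ref{cor:separacion_coradiantes_regular_theta_delta}.
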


\begin{proof}
If \(G_\alpha(y)=0\), then \(p_\alpha(y)\ge\lambda\). The result follows from
Corollary~\ref{cor:separacion_coradiantes_regular_theta_delta}.
\end{proof}

We next record the quantitative content of the regular separation. The residual
\(G_\alpha\) controls the radial-depth violation \((\eta_0-\delta_{\cK}(\cdot))_+\) and the distance to its zero-level set. Hence, it provides a certificate of approximate radial feasibility.

\begin{proposition}[Error bounds generated by the separation residual]
\label{prop:error_bounds_from_separation_residual}
Fix \(\alpha\in(\alpha_1,\alpha_2)\). The following error bounds hold.

\textup{(i)} For every \(y\in X\setminus\{0_X\}\),
\[
\bigl(\eta_0-\delta_{\cK}(y)\bigr)_+
\le
\frac{\eta_0}{\lambda}\,G_\alpha(y).
\]

\textup{(ii)} For every \(y\in X\),
\[
\dist(y,F_\alpha)
\le
\frac{2}{1-\alpha}G_\alpha(y),
\]
where
\[
F_\alpha:=\{y\in X:\ p_\alpha(y)\ge\lambda\}
=\{y\in X:\ G_\alpha(y)=0\}.
\]
Moreover, every \(y\in F_\alpha\) satisfies \(\delta_{\cK}(y)\ge\eta_0\).
\end{proposition}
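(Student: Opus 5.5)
For (i) I will split into cases according to the size of $G_\alpha(y)$ and use Corollary~\ref{cor:separacion_coradiantes_regular_theta_delta}. Fix $y\neq 0_X$.

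\emph{Case $G_\alpha(y)=0$.} Corollary~\ref{cor:gap_zero_implies_depth} gives $\delta_{\cK}(y)\ge\eta_0$, so the left-hand side of (i) vanishes.

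\emph{Case $G_\alpha(y)\ge\lambda$.} The right-hand side is at least $\eta_0$. The left-hand side is at most $\eta_0$, since $\delta_{\cK}\ge 0$.

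\emph{Case $0<G_\alpha(y)<\lambda$.} Set $\theta:=p_\alpha(y)/\lambda=1-G_\alpha(y)/\lambda\in(0,1)$. Then $p_\alpha(y)=\lambda\theta$, and the corollary yields $\delta_{\cK}(y)\ge\theta\eta_0$. Hence
\[
(\eta_0-\delta_{\cK}(y))_+\le \eta_0(1-\theta)=\frac{\eta_0}{\lambda}G_\alpha(y).
\]

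For (ii), the identity $F_\alpha=\{G_\alpha=0\}$ is immediate from the definition of $G_\alpha$. The final claim follows from Corollary~\ref{cor:gap_zero_implies_depth}, once we note that $0_X\notin F_\alpha$ because $p_\alpha(0_X)=0<\lambda$.

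For the distance bound, take $y\notin F_\alpha$ (otherwise there is nothing to prove), so $G_\alpha(y)=\lambda-p_\alpha(y)>0$. I will move $y$ in a direction along which $p_\alpha$ increases at a linear rate. Since $\|f\|_*=1$, for every $\epsilon>0$ there is $u\in S_X$ with $f(u)>1-\epsilon$. For $t>0$,
\[
p_\alpha(y+tu)\ge f(y)+t f(u)-\alpha\|y\|-\alpha t>p_\alpha(y)+t(1-\epsilon-\alpha).
\]
Choose $\epsilon=(1-\alpha)/2$ and $t:=2G_\alpha(y)/(1-\alpha)$. Then $p_\alpha(y+tu)>p_\alpha(y)+G_\alpha(y)=\lambda$, so $y+tu\in F_\alpha$ and
\[
\dist(y,F_\alpha)\le \|tu\|=t=\frac{2}{1-\alpha}G_\alpha(y).
\]
The factor $2$ is exactly the slack that absorbs the non-attainment of the dual norm.

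The main point requiring care is the choice $\theta=p_\alpha(y)/\lambda$ in the third case of (i), which needs $0<\theta<1$; the case split is designed to guarantee this. Part (ii) is routine.
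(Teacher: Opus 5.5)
Your proof is correct and follows the paper's argument essentially step for step. Part (i) uses the same three-case split: your cases $G_\alpha(y)=0$, $G_\alpha(y)\ge\lambda$, and $0<G_\alpha(y)<\lambda$ are exactly the paper's cases $p_\alpha(y)\ge\lambda$, $p_\alpha(y)\le 0$, and $0<p_\alpha(y)<\lambda$, with the same choice $\theta=p_\alpha(y)/\lambda$. Part (ii) uses the same construction, a direction $u\in S_X$ with $f(u)>(1+\alpha)/2$ and step $t=2G_\alpha(y)/(1-\alpha)$.
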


\begin{proof}
We prove \textup{(i)}. Let \(y\in X\setminus\{0_X\}\). If \(p_\alpha(y)\ge\lambda\), then \(G_\alpha(y)=0\), and Corollary~\ref{cor:separacion_coradiantes_regular_theta_delta} gives \(\delta_{\cK}(y)\ge\eta_0\). Hence the estimate is immediate.

Assume that \(0<p_\alpha(y)<\lambda\), and set \(\theta:=p_\alpha(y)/\lambda\). Then \(0<\theta<1\), and Corollary~\ref{cor:separacion_coradiantes_regular_theta_delta} yields
\[
\delta_{\cK}(y)\ge\theta\eta_0
=
\frac{\eta_0}{\lambda}p_\alpha(y).
\]
Therefore
\[
\eta_0-\delta_{\cK}(y)
\le
\frac{\eta_0}{\lambda}\bigl(\lambda-p_\alpha(y)\bigr)
=
\frac{\eta_0}{\lambda}G_\alpha(y),
\]
and the estimate follows. Finally, if \(p_\alpha(y)\le0\), then \(G_\alpha(y)=\lambda-p_\alpha(y)\ge\lambda\), whence
\[
\frac{\eta_0}{\lambda}G_\alpha(y)
\ge
\eta_0
\ge
\bigl(\eta_0-\delta_{\cK}(y)\bigr)_+.
\]
This proves \textup{(i)}.

We prove \textup{(ii)}. Let \(y\in X\). If \(y\in F_\alpha\), the estimate is trivial. Assume that \(y\notin F_\alpha\). Then \(G_\alpha(y)=\lambda-p_\alpha(y)>0\). Since \(f\in S_{X^*}\) and \(\alpha<1\), there exists \(u\in S_X\) such that \(f(u)>(1+\alpha)/2\). Hence, for every \(t > 0\),
\[
\begin{aligned}
p_\alpha(y+tu)
&= f(y+tu)-\alpha\|y+tu\| \\
&\ge f(y)+tf(u)-\alpha(\|y\|+t) \\
&= p_\alpha(y)+t(f(u)-\alpha) \\
&> p_\alpha(y)+\frac{1-\alpha}{2}t.
\end{aligned}
\]
Taking \(t_0:=2G_\alpha(y)/(1-\alpha)\) and \(z:=y+t_0u\), we obtain
\[
p_\alpha(z)>p_\alpha(y)+G_\alpha(y)=\lambda.
\]
Thus \(z\in F_\alpha\), and
\[
\dist(y,F_\alpha)
\le
\|z-y\|
=
t_0
=
\frac{2}{1-\alpha}G_\alpha(y).
\]
This proves \textup{(ii)}.

Finally, let \(y\in F_\alpha\). Then \(p_\alpha(y)\ge\lambda>0\). Since \(p_\alpha(0_X)=0\), we have \(y\neq0_X\). By Corollary~\ref{cor:separacion_coradiantes_regular_theta_delta}, \(\delta_{\cK}(y)\ge\eta_0\).
\end{proof}

\begin{remark}
\label{rem:core_variational_consequence}
The quantitative content of Theorem \ref{teorema:separacion_coradiantes_regular} is twofold. First,
Proposition~\ref{prop:error_bounds_from_separation_residual}\textup{(i)} shows
that \(G_\alpha\) controls the radial-depth violation
\[
\bigl(\eta_0-\delta_{\cK}(y)\bigr)_+,
\qquad y\in X\setminus\{0_X\}.
\]
Second, Proposition~\ref{prop:error_bounds_from_separation_residual}\textup{(ii)}
shows that \(G_\alpha\) controls the distance to its zero set
\[
F_\alpha=\{y\in X:\ G_\alpha(y)=0\}.
\]
Moreover, Theorem~\ref{teorema:separacion_coradiantes_regular} yields
\(\cC\subset F_\alpha\), while
Proposition~\ref{prop:error_bounds_from_separation_residual} gives
\[
F_\alpha\subset
\{y\in X\setminus\{0_X\}:\delta_{\cK}(y)\ge\eta_0\}.
\]
Thus \(G_\alpha\) separates \(\cC\) from the forbidden radial region, controls
the distance to the separating zero set \(F_\alpha\), and provides an a
posteriori certificate of approximate radial feasibility.
\end{remark}

We now give two variational applications of the preceding estimates. Fix
\(\alpha\in(\alpha_1,\alpha_2)\) and set
\[
R_\alpha(y):=\frac{\eta_0}{\lambda}G_\alpha(y),
\qquad y\in X.
\]
By Proposition~\ref{prop:error_bounds_from_separation_residual}\textup{(i)},
\(R_\alpha\) controls the radial-depth violation and therefore provides a
certificate of approximate radial feasibility. This is applied first to
equilibrium problems, with variational inequalities as a particular case. The
metric error bound in
Proposition~\ref{prop:error_bounds_from_separation_residual}\textup{(ii)} is then
used to obtain an exact penalization principle for \(F_\alpha=\{G_\alpha=0\}\).

\subsubsection{Equilibrium Problems and Variational Inequalities with Radial Feasibility}
\label{subsubsec:equilibrium_vi_radial_feasibility}
We first apply the radial residual \(R_\alpha\) to equilibrium problems with an additional radial feasibility requirement. Let \(A\subset X\setminus\{0_X\}\) be nonempty, and let \(\Phi:A\times A\to\R\) be a bifunction. We consider the problem of finding \(\bar y\in A\) such that
\[
\Phi(\bar y,z)\ge0\quad\forall z\in A,
\qquad
\delta_{\cK}(\bar y)\ge\eta_0.
\]
For \(y\in A\), define
\[
E_\Phi(y):=\sup_{z\in A}[-\Phi(y,z)]_+,
\qquad
M_{\alpha,\Phi}(y):=\max\{E_\Phi(y),R_\alpha(y)\}.
\]
Then \(E_\Phi(y)=0\) if and only if \(y\) satisfies the equilibrium inequalities, while the condition \(R_\alpha(y)\le\varepsilon\) implies \(\delta_{\cK}(y)\ge\eta_0-\varepsilon\).

\begin{definition}
\label{def:approximate_radially_feasible_equilibrium}
Let \(\varepsilon\ge0\). A point \(y\in A\) is called an
\(\varepsilon\)-approximate radially feasible equilibrium point if
\[
\Phi(y,z)\ge-\varepsilon\quad\forall z\in A,
\qquad
\delta_{\cK}(y)\ge\eta_0-\varepsilon.
\]
\end{definition}

\begin{proposition}[Equilibrium residual and radial feasibility]
\label{prop:equilibrium_residual_radial_feasibility}
If \(y\in A\) satisfies \(M_{\alpha,\Phi}(y)\le\varepsilon\), then \(y\) is an
\(\varepsilon\)-approximate radially feasible equilibrium point.

Consequently, if \((y_n)\subset A\) satisfies \(M_{\alpha,\Phi}(y_n)\to0\), then
\[
E_\Phi(y_n)\to0
\qquad\text{and}\qquad
\bigl(\eta_0-\delta_{\cK}(y_n)\bigr)_+\to0.
\]
\end{proposition}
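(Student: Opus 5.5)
The proof reduces both conclusions to the two inequalities $E_\Phi(y)\le M_{\alpha,\Phi}(y)$ and $R_\alpha(y)\le M_{\alpha,\Phi}(y)$, which hold because $M_{\alpha,\Phi}$ is a maximum. Each inequality is then decoded separately.

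Fix $y\in A$ with $M_{\alpha,\Phi}(y)\le\varepsilon$.

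\emph{Equilibrium part.} From $E_\Phi(y)\le\varepsilon$ we get $[-\Phi(y,z)]_+\le\varepsilon$ for every $z\in A$. Hence $-\Phi(y,z)\le\varepsilon$, that is, $\Phi(y,z)\ge-\varepsilon$ for all $z\in A$.

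\emph{Radial part.} Since $A\subset X\setminus\{0_X\}$, we have $y\neq 0_X$, so Proposition~\ref{prop:error_bounds_from_separation_residual}\textup{(i)} applies and gives
\[
\bigl(\eta_0-\delta_{\cK}(y)\bigr)_+\le\frac{\eta_0}{\lambda}G_\alpha(y)=R_\alpha(y)\le\varepsilon .
\]
Therefore $\eta_0-\delta_{\cK}(y)\le\varepsilon$, i.e.\ $\delta_{\cK}(y)\ge\eta_0-\varepsilon$. The only point to check is the case $\delta_{\cK}(y)=+\infty$. There the positive part equals $0$ under the natural extended-real convention, and the inequality $\delta_{\cK}(y)\ge\eta_0-\varepsilon$ holds trivially. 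These two parts together verify Definition~\ref{def:approximate_radially_feasible_equilibrium}.

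\emph{Sequential statement.} Suppose $M_{\alpha,\Phi}(y_n)\to0$. Since $0\le E_\Phi(y_n)\le M_{\alpha,\Phi}(y_n)$, we get $E_\Phi(y_n)\to0$. The same estimate from Proposition~\ref{prop:error_bounds_from_separation_residual}\textup{(i)} gives
\[
0\le\bigl(\eta_0-\delta_{\cK}(y_n)\bigr)_+\le R_\alpha(y_n)\le M_{\alpha,\Phi}(y_n),
\]
and squeezing yields $\bigl(\eta_0-\delta_{\cK}(y_n)\bigr)_+\to0$.

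There is no real obstacle: all the substantive work is contained in Proposition~\ref{prop:error_bounds_from_separation_residual}\textup{(i)}, and the only delicate point is the extended-value case $\delta_{\cK}=+\infty$.
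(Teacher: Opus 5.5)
Your proof is correct and is essentially the paper's argument. You bound both components of the maximum by $\varepsilon$, read off $\Phi(y,z)\ge-\varepsilon$ from $E_\Phi(y)\le\varepsilon$, obtain $\delta_{\cK}(y)\ge\eta_0-\varepsilon$ from the radial error bound in Proposition~\ref{prop:error_bounds_from_separation_residual}\textup{(i)}, and derive the sequential part from the same estimates. Your extra checks are harmless refinements, and the case $\delta_{\cK}(y)=+\infty$ never actually occurs here, since the standing hypothesis $d_{\cK}>0$ gives $\delta_{\cK}(y)\le\|y\|/d_{\cK}$.
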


\begin{proof}
If \(M_{\alpha,\Phi}(y)\le\varepsilon\), then \(E_\Phi(y)\le\varepsilon\) and
\(R_\alpha(y)\le\varepsilon\). Hence, for every \(z\in A\),
\[
[-\Phi(y,z)]_+\le\varepsilon,
\]
and therefore \(\Phi(y,z)\ge-\varepsilon\). Moreover, by the radial error bound,
\[
\bigl(\eta_0-\delta_{\cK}(y)\bigr)_+
\le R_\alpha(y)
\le\varepsilon.
\]
Thus \(\delta_{\cK}(y)\ge\eta_0-\varepsilon\). The sequential assertion follows
from the same estimates.
\end{proof}

We now specialize this construction to variational inequalities. Let
\(T:A\to X^*\) and define
\[
\Phi_T(y,z):=\langle T(y),z-y\rangle,
\qquad y,z\in A.
\]
The associated equilibrium problem is precisely
\[
\langle T(\bar y),z-\bar y\rangle\ge0
\quad\forall z\in A,
\qquad
\delta_{\cK}(\bar y)\ge\eta_0.
\]
Moreover,
\[
E_{\Phi_T}(y)
=
\sup_{z\in A}\bigl[\langle T(y),y-z\rangle\bigr]_+
=:\Psi_T(y).
\]
Thus the corresponding combined residual is
\[
M_{\alpha,T}(y):=\max\{\Psi_T(y),R_\alpha(y)\},
\qquad y\in A.
\]

\begin{definition}
\label{def:approximate_radially_feasible_vi_solution}
Let \(\varepsilon\ge0\). A point \(y\in A\) is called an
\(\varepsilon\)-approximate radially feasible solution of the variational
inequality if
\[
\langle T(y),z-y\rangle\ge-\varepsilon\quad\forall z\in A,
\qquad
\delta_{\cK}(y)\ge\eta_0-\varepsilon.
\]
\end{definition}

\begin{corollary}[Variational inequality residual and radial feasibility]
\label{cor:vi_residual_radial_feasibility}
If \(y\in A\) satisfies \(M_{\alpha,T}(y)\le\varepsilon\), then \(y\) is an
\(\varepsilon\)-approximate radially feasible solution of the variational
inequality.

Consequently, if \((y_n)\subset A\) satisfies \(M_{\alpha,T}(y_n)\to0\), then
\[
\Psi_T(y_n)\to0
\qquad\text{and}\qquad
\bigl(\eta_0-\delta_{\cK}(y_n)\bigr)_+\to0.
\]
\end{corollary}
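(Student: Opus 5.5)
The plan is to obtain Corollary~\ref{cor:vi_residual_radial_feasibility} directly from Proposition~\ref{prop:equilibrium_residual_radial_feasibility} by specializing the bifunction to \(\Phi=\Phi_T\). First I will check that \(\Phi_T(y,z)=\langle T(y),z-y\rangle\) is a real-valued bifunction on \(A\times A\). Since \(T(y)\in X^*\) and \(z-y\in X\), this is immediate, so \(\Phi_T\) is admissible in the equilibrium framework with the same set \(A\subset X\setminus\{0_X\}\).

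Next I will identify the residuals. Because \(-\Phi_T(y,z)=\langle T(y),y-z\rangle\), we get \(E_{\Phi_T}(y)=\sup_{z\in A}[\langle T(y),y-z\rangle]_+=\Psi_T(y)\), as already recorded before the statement. Hence \(M_{\alpha,\Phi_T}=M_{\alpha,T}\) on \(A\). In the same way, the conditions in Definition~\ref{def:approximate_radially_feasible_equilibrium} for \(\Phi=\Phi_T\) read \(\langle T(y),z-y\rangle\ge-\varepsilon\) for all \(z\in A\) together with \(\delta_{\cK}(y)\ge\eta_0-\varepsilon\). These are exactly the conditions of Definition~\ref{def:approximate_radially_feasible_vi_solution}.

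With these identifications, the first assertion follows from Proposition~\ref{prop:equilibrium_residual_radial_feasibility}: \(M_{\alpha,T}(y)\le\varepsilon\) means \(M_{\alpha,\Phi_T}(y)\le\varepsilon\), so \(y\) is an \(\varepsilon\)-approximate radially feasible equilibrium point for \(\Phi_T\), that is, an \(\varepsilon\)-approximate radially feasible solution of the variational inequality. For the sequential assertion, I will note that \(0\le\Psi_T(y_n)\le M_{\alpha,T}(y_n)\) and that Proposition~\ref{prop:error_bounds_from_separation_residual}(i) gives \(0\le(\eta_0-\delta_{\cK}(y_n))_+\le R_\alpha(y_n)\le M_{\alpha,T}(y_n)\). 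Both quantities then tend to zero by squeezing.

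I do not expect any step to be a genuine obstacle, since everything is bookkeeping. The only point needing care is the use of the radial error bound: it requires \(y\neq0_X\). This is guaranteed by the standing assumption \(A\subset X\setminus\{0_X\}\), so \(\delta_{\cK}(y)\) is defined and Proposition~\ref{prop:error_bounds_from_separation_residual}(i) applies.
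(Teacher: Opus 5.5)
Your proposal is correct and matches the paper's proof, which simply applies Proposition~\ref{prop:equilibrium_residual_radial_feasibility} to the bifunction \(\Phi_T(y,z)=\langle T(y),z-y\rangle\). Your explicit identification \(E_{\Phi_T}=\Psi_T\) and \(M_{\alpha,\Phi_T}=M_{\alpha,T}\), together with the squeeze argument for the sequential part, just spells out the bookkeeping that the paper leaves implicit.
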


\begin{proof}
Apply Proposition~\ref{prop:equilibrium_residual_radial_feasibility} to the
bifunction \(\Phi_T(y,z)=\langle T(y),z-y\rangle\).
\end{proof}

\begin{remark}
\label{rem:combined_residual_interpretation}
The residuals \(M_{\alpha,\Phi}\) and \(M_{\alpha,T}\) are one-sided merit functions. They certify small violation of the equilibrium, respectively
variational inequality, together with small violation of the radial-depth
constraint. They need not characterize exact feasible solutions, since
\(G_\alpha\) may be positive at points satisfying \(\delta_{\cK}(y)\ge\eta_0\).
Their role is a posteriori: small residual implies approximate radial
feasibility.
\end{remark}

\subsubsection{Exact Penalization from the Separation-induced Metric Error Bound}
\label{subsubsec:exact_penalization_from_separation_metric_error_bound}

We now use the metric error bound in
Proposition~\ref{prop:error_bounds_from_separation_residual}\textup{(ii)}. It
shows that the separation residual \(G_\alpha\) controls the distance to its zero
set \(F_\alpha\). This yields an exact penalization principle for optimization
problems constrained by \(F_\alpha\).

\begin{proposition}[Exact penalization on the separation zero set]
\label{prop:global_exact_penalty_from_coradiant_separation}
Fix \(\alpha\in(\alpha_1,\alpha_2)\), and let \(\varphi:X\to\R\) be Lipschitz.
Assume that \(\bar y\in F_\alpha\) solves
\[
\min\{\varphi(y):y\in F_\alpha\}.
\]
If
\[
\rho>\frac{2\Lip(\varphi)}{1-\alpha},
\]
then \(\bar y\) solves
\[
\min\{\varphi(y)+\rho G_\alpha(y):y\in X\}.
\]
\end{proposition}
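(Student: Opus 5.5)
This is the classical Clarke exact penalty argument, fed by the metric error bound of Proposition~\ref{prop:error_bounds_from_separation_residual}\textup{(ii)}. Set $L:=\Lip(\varphi)$ and $\kappa:=2/(1-\alpha)$. The goal is to show that for every $y\in X$
\[
\varphi(\bar y)\le\varphi(y)+\rho G_\alpha(y).
\]
Since $G_\alpha(\bar y)=0$ (because $\bar y\in F_\alpha$), this says exactly that $\bar y$ minimizes the penalized function.

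If $y\in F_\alpha$, then $G_\alpha(y)=0$, and the inequality is just the optimality of $\bar y$ on $F_\alpha$.

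If $y\notin F_\alpha$, then $G_\alpha(y)>0$. Fix $\varepsilon>0$ and choose $z\in F_\alpha$ with $\|y-z\|\le\dist(y,F_\alpha)+\varepsilon$. Such a $z$ exists because $F_\alpha\ne\varnothing$: it contains $\bar y$, and it also contains $\cC$ by Remark~\ref{rem:core_variational_consequence}. Using optimality of $\bar y$, the Lipschitz bound, and \textup{(ii)}, we get
\[
\varphi(\bar y)\le\varphi(z)\le\varphi(y)+L\|y-z\|\le\varphi(y)+L\kappa G_\alpha(y)+L\varepsilon .
\]
Letting $\varepsilon\downarrow0$ gives $\varphi(\bar y)\le\varphi(y)+L\kappa G_\alpha(y)$. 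Since $\rho>L\kappa$ and $G_\alpha(y)\ge0$, this is at most $\varphi(y)+\rho G_\alpha(y)$.

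Closedness of $F_\alpha$ is not needed, because the $\varepsilon$-approximate nearest point handles the infimum. The strict inequality on $\rho$ is also more than required; $\rho\ge L\kappa$ already suffices. Nothing here is a genuine obstacle: the only care needed is to use the metric bound \textup{(ii)} with the constant $2/(1-\alpha)$ exactly as stated, and to observe that $G_\alpha\ge0$ everywhere.
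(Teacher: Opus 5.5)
Your proposal is correct and matches the paper's proof: the same case split $y\in F_\alpha$ / $y\notin F_\alpha$, with optimality of $\bar y$ and the Lipschitz bound combined with the metric error bound (ii). The only cosmetic difference is that you use an $\varepsilon$-approximate nearest point where the paper takes the infimum over $z\in F_\alpha$ directly. Your observation that $\rho\ge 2\Lip(\varphi)/(1-\alpha)$ already suffices is right; the paper uses the strict inequality only to get $\varphi(y)+\rho G_\alpha(y)>\varphi(\bar y)$ for $y\notin F_\alpha$, so that every penalized minimizer lies in $F_\alpha$.
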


\begin{proof}
Let \(L:=\Lip(\varphi)\), and fix
\(\rho>2L/(1-\alpha)\). Since \(\bar y\in F_\alpha\), \(G_\alpha(\bar y)=0\).

Let \(y\in X\). If \(y\in F_\alpha\), then \(G_\alpha(y)=0\), and the
optimality of \(\bar y\) on \(F_\alpha\) gives
\[
\varphi(\bar y)+\rho G_\alpha(\bar y)
=
\varphi(\bar y)
\le
\varphi(y)
=
\varphi(y)+\rho G_\alpha(y).
\]

Assume now that \(y\notin F_\alpha\). For every \(z\in F_\alpha\),
\[
\varphi(y)\ge\varphi(z)-L\|y-z\|\ge\varphi(\bar y)-L\|y-z\|.
\]
Taking the infimum over \(z\in F_\alpha\), and using
Proposition~\ref{prop:error_bounds_from_separation_residual}\textup{(ii)}, we get
\[
\varphi(y)
\ge
\varphi(\bar y)-L\dist(y,F_\alpha)
\ge
\varphi(\bar y)-\frac{2L}{1-\alpha}G_\alpha(y).
\]
Hence
\[
\varphi(y)+\rho G_\alpha(y)
\ge
\varphi(\bar y)
+
\left(\rho-\frac{2L}{1-\alpha}\right)G_\alpha(y).
\]
Since \(y\notin F_\alpha\), \(G_\alpha(y)>0\). Therefore
\[
\varphi(y)+\rho G_\alpha(y)>\varphi(\bar y)
=
\varphi(\bar y)+\rho G_\alpha(\bar y).
\]
Thus \(\bar y\) is a global minimizer of \(\varphi+\rho G_\alpha\) on \(X\).
\end{proof}

\begin{remark}
\label{rem:penalty-relational-interpretation}
Proposition~\ref{prop:global_exact_penalty_from_coradiant_separation} is an
exact penalization result for the constraint \(y\in F_\alpha\). Its only
variational input is the metric error bound
\[
\dist(y,F_\alpha)
\le
\frac{2}{1-\alpha}G_\alpha(y),
\qquad y\in X.
\]
The co-radiant structure enters through the construction of \(G_\alpha\): the
separation theorem yields \(\cC\subset F_\alpha\), while
Proposition~\ref{prop:error_bounds_from_separation_residual} gives
\[
F_\alpha\subset
\{y\in X\setminus\{0_X\}:\delta_{\cK}(y)\ge\eta_0\}.
\]
Thus \(F_\alpha\) is a separation-induced feasible region: it contains \(\cC\),
is contained in the prescribed radial-depth region, and admits the exact penalty
\(G_\alpha\).
\end{remark}

\section{Conclusions and Future Research}
\label{sec:conclusions_future_research}

We have introduced regular co-radiant sets and proved a nonlinear separation theorem for this class. The main point is that regularity replaces the existence of a global norm-base by a weaker inner approximation condition. This substantially enlarges the scope of the previous separation theorem for co-radiant sets.

A relevant consequence is that the result applies to the co-radiant sets associated with the main notions of approximate efficiency. Hence the scalarization results previously obtained from co-radiant separation can be used in those standard cases without imposing the restrictive norm-base assumption.

We have also introduced the radial profile of a co-radiant set. This function detects the radial geometry of the set and provides intrinsic criteria for regularity. Moreover, it has an independent scalarization role: it yields scalar tests for approximate efficiency which do not rely on the separation theorem and are valid for arbitrary co-radiant sets.

Finally, we have given a variational reading of the separation residual \(G_\alpha\). The residual controls both the radial-depth violation and the distance to its zero set. These estimates provide certificates of approximate radial feasibility for equilibrium problems and variational inequalities, and yield an exact penalization principle for optimization problems constrained by the separation zero set.

Several questions remain open. One may study sharper criteria for regularity, stability of regular co-radiant sets under perturbations, and extensions to locally convex spaces. Another direction is to develop radial profiles and nonlinear scalarizations adapted to broader classes of radially generated sets.

\subsection*{Funding}

This work was supported by Project PID2021-122126NB-C32, funded by MICIU/AEI/10.13039/501100011033 and by the European Regional Development Fund (ERDF/FEDER), ``A way of making Europe''.

\bibliographystyle{unsrt}
\bibliography{references}

\end{document}